\documentclass[11pt,a4paper]{amsart}

\usepackage{amsmath, amsthm, verbatim, amsfonts, amssymb, xcolor}
\usepackage{graphics, xspace, enumerate,bbm}
\usepackage[a4paper,margin=3cm]{geometry}
\usepackage{marginnote}

\usepackage[bb=boondox]{mathalfa}

\usepackage{graphicx}
\usepackage[a4paper,colorlinks=true,citecolor=green,urlcolor=green,linkcolor=green,bookmarksopen=true,unicode=true,pdffitwindow=true]{hyperref}

\hypersetup{pdfauthor={}}
\hypersetup{pdftitle={On Sub-Geometric Ergodicity of  Diffusion Processes}}

\theoremstyle{plain}
\newtheorem{theorem}{Theorem}[section]
\newtheorem{corollary}[theorem]{Corollary}
\newtheorem{lemma}[theorem]{Lemma}
\newtheorem{proposition}[theorem]{Proposition}
\theoremstyle{definition}
\newtheorem{remark}[theorem]{Remark}
\newtheorem{example}[theorem]{Example}

\newcommand {\D} {{\rm d}}
\newcommand {\E} {{\rm e}}
\newcommand {\Prob} {\ensuremath{\mathbb{P}}}
\newcommand {\R} {\ensuremath{\mathbb{R}}}
\newcommand {\ZZ} {\ensuremath{\mathbb{Z}}}
\newcommand {\N} {\ensuremath{\mathbb{N}}}
\newcommand {\V} {\ensuremath{\mathcal{V}}}
\newcommand {\W} {\ensuremath{\mathcal{W}}}

\newcommand{\process}[1]{\{#1_t\}_{t\geq0}}
\newcommand {\Ind} {\ensuremath{\mathbb{1}}}

\numberwithin{equation}{section}

\begin{document}
\allowdisplaybreaks[4]

\title{On Sub-Geometric Ergodicity of  Diffusion Processes}

\author[Petra\ Lazi\'{c}]{Petra Lazi\'{c}}
\address[Petra\ Lazi\'{c}]{Department of Mathematics\\University of Zagreb\\10000 Zagreb\\Croatia}
\email{petralaz@math.hr}

\author[Nikola\ Sandri\'{c}]{Nikola Sandri\'{c}}
\address[Nikola\ Sandri\'{c}]{Department of Mathematics\\University of Zagreb\\10000 Zagreb\\Croatia}
\email{nsandric@math.hr}

\subjclass[2010]{60J25, 60J75, 60G17}
\keywords{asymptotic flatness, diffusion process, sub-geometric ergodicity, total variation distance, Wasserstein distance}

\begin{abstract}
   In this article, we discuss  ergodicity properties of a diffusion process given through an It\^{o} stochastic differential equation. 
   We identify conditions on the drift and diffusion coefficients 
   which result
   in sub-geometric  ergodicity of the corresponding semigroup with respect to the total variation distance.  We also prove  sub-geometric contractivity and ergodicity of the  semigroup under a class of
   Wasserstein distances. Finally, we  discuss sub-geometric ergodicity of two classes of Markov processes with jumps. 
\end{abstract}

\maketitle

%
%
%
%

\section{Introduction}

One of the classical directions in the analysis   of Markov processes centers around  their ergodicity properties. In this article, we focus on both   qualitative and quantitative  aspects of this problem. More precisely, 
we discuss sub-geometric ergodicity  of a diffusion process given by 
\begin{equation}\label{eq1}\D X^x_t\,=\,b(X^x_t)\D t+\sigma(X^x_t)\D B_t\,,\qquad X^x_0\,=\,x\in\R^{d}\,,\end{equation} 
with respect to the total variation distance and/or a class of Wasserstein distances.
Here, $\process{B}$ stands for a standard $n$-dimensional Brownian motion (defined on a stochastic basis $(\Omega,\mathcal{F},\{\mathcal{F}_t\}_{t\ge0},\mathbb{P})$ satisfying the usual conditions), and the coefficients $b:\R^d\to\R^d$ and $\sigma:\R^d\to\R^{d\times n}$ satisfy: 
\begin{description}
	\item[(C1)] for any $r>0$,
	$$\sup_{x\in B_r(0)}(\rvert b(x)\rvert+\lVert\sigma(x)\lVert_{{\rm HS}})\,<\,\infty\,;$$
	\item[(C2)] for any $r>0$ there is $\Gamma_r>0$ such that for all $x,y\in B_r(0)$,  $$2\langle x-y,b(x)-b(y)\rangle+\lVert\sigma(x)-\sigma(y)\lVert_{{\rm HS}}^2\,\le\, \Gamma_r|x-y|^{2}\,;$$
	\item[(C3)] there is $\Gamma>0$ such that for all $x\in\R^d$,  $$2\langle x,b(x)\rangle+\lVert\sigma(x)\lVert_{{\rm HS}}^{2}\,\le\, \Gamma(1+|x|^2)\,,$$
\end{description}
where  $B_r(x)$ denotes the open ball with radius $r>0$ around $x\in\R^d$, and  $\lVert M\lVert_{{\rm HS}}^2:={\rm Tr}\,MM^T$ is the  Hilbert-Schmidt norm of a real  matrix $M.$
\subsection{Structural properties of the model} It is well known that under $\textbf{(C1)}$-$\textbf{(C3)}$, for any $x\in\R^d$, the stochastic differential equation (SDE) in \eqref{eq1} admits a unique strong non-explosive solution $\process{X^x}$ which is a strong Markov process with continuous sample paths and  transition kernel $p(t,x,\D y)=\Prob(X^x_t\in \D y)$, $t\ge0$, $x\in\R^d$,  
(see  \cite[Theorems 5.4.1, 5.4.5 and 5.4.6]{Durrett-Book-1996} and \cite[Theorem 3.1.1]{Prevot-Rockner-Book-2007}). In the context of Markov processes, it is natural that the underlying probability measure depends on the initial conditions of the process. Using standard arguments (Kolmogorov extension theorem), it is well known that for each $x\in\R^d$ the above defined transition kernel defines a unique probability measure $\mathbb{P}^x$ on the canonical (sample-path)  space such that the projection process, denoted by $\process{X}$, is a strong Markov process (with respect to the completion of the corresponding natural filtration), it has continuous  sample paths, 
and  the same finite-dimensional distributions (with respect to $\mathbb{P}^x$) as $\process{X^x}$ (with respect to $\mathbb{P}$). Since we are interested in distributional properties of the solution to   \eqref{eq1} only, in the sequel we rather deal with $\process{X}$ than with $\process{X^x}$.
According to \cite[Lemma 2.5]{Majka-2016}, $\process{X}$ is also a $C_b$-Feller process, that is, the corresponding semigroup, defined by  $$P_tf(x)\,:=\,\mathbb{E}^x[f(X_t)]=\int_{\R^d}f(y)p(t,x,\D y)\,,\qquad t\geq0\,, \ x\in\R^d\,,\ f\in B_b(\R^d)\,,$$ satisfies $P_t(C_b(\R^d))\subseteq C_b(\R^d)$.  Here, $B_b(\R^d)$ and $C_b(\R^d)$  denote the spaces of  bounded Borel measurable functions and bounded continuous functions, respectively. Let us remark that in the above-mentioned lemma the author assumes that $b(x)$ is continuous, but the assertion of the lemma also holds true in the case when $b(x)$ is locally bounded (condition \textbf{(C1)}). 
In particular, this automatically implies that $\process{X}$ is a strong Markov process with respect to the right-continuous and completed version of the underlying natural filtration.
Further, in \cite[Theorem V.21.1]{Rogers-Williams-Book-II-2000} it is shown that $$f(X_t)-f(X_0)-\int_0^t\mathcal{L}f(X_s)\,\D s\,,\qquad t\geq0\,,$$ is a $\mathbb{P}^x$-local martingale for every $x\in\R^d$ and every  $f\in C^2(\R^d)$, where $$\mathcal{L}f(x)\,:=\,\langle b(x),\nabla f(x)\rangle+\frac{1}{2}{\rm Tr}\,\sigma(x)\sigma(x)^T\nabla^2f(x)\,.$$  
If $b(x)$ and $\sigma(x)$ are continuous, then the infinitesimal generator $(\mathcal{A},\mathcal{D}_\mathcal{A})$ of $\process{X}$ (with respect to the Banach space $(B_b(\R^d),\lVert\cdot\rVert_\infty)$) satisfies $C_c^2(\R^d)\subseteq\mathcal{D}_\mathcal{A}$ and $\mathcal{A}|_{\mathcal{D}_\mathcal{A}}=\mathcal{L}.$ Here, $\lVert\cdot\rVert_\infty$ and  $C_c^2(\R^d)$ denote the supremum norm and the space of    twice continuously differentiable functions with compact support, respectively.
Recall, the infinitesimal generator (with respect to  $(\lVert\cdot\rVert_\infty,B_b(\R^d))$) of an $\R^d$-valued Markov process $\process{M}$ with semigroup  $\process{P}$ (defined as above) is a linear operator $\mathcal{A}:\mathcal{D}_\mathcal{A}\to B_b(\R^d)$ defined by
$$\mathcal{A}f\,:=\,
\lim_{t\to0}\frac{P_tf-f}{t}\,,\qquad f\in\mathcal{D}_{\mathcal{A}}\,:=\,\left\{f\in B_b(\R^d):
\lim_{t\to0}\frac{P_t f-f}{t} \ \textrm{exists in}\
\lVert\cdot\rVert_\infty\right\}\,.
$$
If $b(x)$ and $\sigma(x)$ are  Lipschitz continuous then $\process{X}$ is a $C_\infty$-Feller process, that is,  $P_t(C_\infty(\R^d))\subseteq C_\infty(\R^d)$ for all $t\geq0$ (see \cite[page 164]{Rogers-Williams-Book-II-2000}),
where  $C_\infty(\R^d)$   stands for the space of   continuous functions vanishing at infinity.

\subsection{Notation and preliminaries} \label{ss1}

We first recall some definitions and general results from the ergodic theory of Markov processes. Our main references are \cite{Meyn-Tweedie-AdvAP-II-1993} and \cite{Tweedie-1994}.
Let $(\Omega,\mathcal{F}, \process{\mathcal{F}},\process{\theta},$ $\process{M},\{\Prob^x\}_{x\in\R^d})$, denoted by  $\process{M}$ in the sequel, be a Markov process with c\`adl\`ag sample paths and  state space
$(\R^d,\mathcal{B}(\R^d))$ (see \cite{Blumenthal-Getoor-Book-1968}).  We let $p(t,x,\D y):=\Prob^x(M_t\in \D y)$, $t\ge0$, $x\in\R^d$,  denote the corresponding transition kernel.  For $t\ge0$ and  a  (not necessarily finite) measure $\mu$ on $\mathcal{B}(\R^d)$, $\mu P_t$ stands for $\int_{\R^d}p(t,x,\D y)\mu(\D x)$.
Also,  assume that $p(t,x,\D y)$
is a probability measure, that is, $\process{M}$ does not admit a cemetery point
in the sense of \cite{Blumenthal-Getoor-Book-1968}. Observe that this is not a restriction since,
as we have already commented, $\process{X}$ is non-explosive.
The process $\process{M}$ is called
\begin{enumerate}
	\item [(i)]
	$\phi$-irreducible if there exists a $\sigma$-finite measure $\phi$ on
	$\mathcal{B}(\R^d)$ such that whenever $\phi(B)>0$ we have
	$\int_0^{\infty}p(t,x,B)\D t>0$ for all $x\in\R^d$;
	
	\item [(ii)]
	transient if it is $\phi$-irreducible, and if there exists a countable
	covering of $\R^d$ with sets
	$\{B_j\}_{j\in\N}\subseteq\mathcal{B}(\R^d)$, and for each
	$j\in\N$ there exists a finite constant $\gamma_j\ge0$ such that
	$\int_0^{\infty}p(t,x,B_j)\,\D{t}\le \gamma_j$ holds for all $x\in\R^d$;
	
	\item [(iii)]
	recurrent if it is $\phi$-irreducible, and $\phi(B)>0$ implies
	$\int_{0}^{\infty}p(t,x,B)\,\D{t}=\infty$ for all $x\in\R^d$.
\end{enumerate}
Let us remark that if $\{M_t\}_{t\ge0}$ is a $\phi$-irreducible
Markov process, then the irreducibility measure $\phi$ can be
maximized. This means that there exists a unique ``maximal" irreducibility
measure $\psi$ such that for any measure $\Bar{\phi}$,
$\{M_t\}_{t\ge0}$ is $\Bar{\phi}$-irreducible if and only if
$\Bar{\phi}$ is absolutely continuous with respect to $\psi$ (see \cite[Theorem~2.1]{Tweedie-1994}).
In view to this, when we refer to an irreducibility
measure we actually refer to the maximal irreducibility measure.
It is also well known that every $\psi$-irreducible Markov
process is either transient or recurrent (see \cite[Theorem
2.3]{Tweedie-1994}). 
Further, recall that a Markov process $\process{M}$ is called 
\begin{enumerate}
	\item[(i)] open-set irreducible
	if the support of its maximal irreducibility measure $\psi$,  
	$${\rm supp}\,\psi\,=\,\{x\in\R^d: \psi(O)>0\ \text{for every open neighborhood}\ O\ \text{of}\ x\}\,,$$ has a non-empty interior;
	\item [(ii)] aperiodic if it admits an irreducible skeleton chain, that is, 
	there exist $t_0>0$ and a $\sigma$-finite measure $\phi$ on
	$\mathcal{B}(\R^d)$, such that $\phi(B)>0$ implies
	$\sum_{n=0}^{\infty} p(nt_0,x,B) >0$ for all $x\in\R^d$.
\end{enumerate}
A
(not necessarily finite) measure $\pi$ on $\mathcal{B}(\R^d)$ is called invariant for
$\process{M}$ if
$\pi P_t
= \pi$
for all $t\ge0$. 
It is well known that if $\process{M}$ is
recurrent, then it possesses a unique (up to constant
multiples) invariant measure $\pi$
(see \cite[Theorem~2.6]{Tweedie-1994}).
If the 
invariant measure is
finite, then it may be normalized to a probability measure. If
$\process{M}$ is recurrent with finite invariant measure, then $\process{M}$ is called 
positive recurrent; otherwise it is called null recurrent. Note that a transient 
Markov process cannot have a finite invariant measure. 
Indeed, assume that
$\process{M}$ is transient and that it admits a
finite invariant measure $\pi$, and fix some $t>0$.
Then, for each $j\in\N$, with $\gamma_j$ and $B_j$ as  above, we have
\begin{equation*}
t\pi(B_j) \,=\,
\int_0^{t}\pi P_s(B_j)\D{s}
\,\le\, \gamma_j\pi(\R^d)\,.
\end{equation*}
Now, by
letting $t\to\infty$ we obtain $\pi(B_j)=0$ for all
$j\in\N$, which is impossible.
A Markov process $\process{M}$ is called ergodic
if it possesses an invariant probability 
measure $\pi$ and there exists a nondecreasing function
$r:[0,\infty)\to[1,\infty)$ such that 
\begin{equation*}
\lim_{t\to\infty}r(t)\lVert p(t,x,\D {y})
-\pi(\D {y})\rVert_{{\rm TV}} \,=\,0\,,\qquad x \in \R^d\,,
\end{equation*} where $\lVert\mu\rVert_{{\rm TV}}:=\sup_{B\in\mathcal{B}(\R^d)}|\mu(B)|$ is the total variation norm  of a signed measure $\mu$ (on $\mathcal{B}(\R^d)$).
We say that $\process{M}$ is sub-geometrically ergodic if it is ergodic and 
$\lim_{t\to\infty}\ln r(t)/t=0$, and 
that it is geometrically ergodic if it is ergodic and
$r(t)=\E^{\kappa t}$ for some $\kappa>0$. 
Let us remark that (under the assumptions of $C_b$-Feller property, open-set irreducibility
and aperiodicity) ergodicity is equivalent 
to positive recurrence (see 
\cite[Theorem 6.1]{Meyn-Tweedie-AdvAP-II-1993},
and \cite[Theorems 4.1, 4.2 and 7.1]{Tweedie-1994}).

We now recall the notion and some general facts about Wasserstein distances  (on $\R^d$). Let $\rho$ be a metric on $\R^d$. Denote by $\R^d_\rho$ the topology induced by $\rho$, and let $\mathcal{B}(\R^d_\rho)$ be the corresponding Borel $\sigma$-algebra. 
For $p\ge0$ denote by $\mathcal{P}_{\rho,p}$ the space of all probability measures $\mu$ on $\mathcal{B}(\R_\rho^d)$ having finite $p$-th moment, that is, $\int_{\R^d}\rho(x_0,x)^p\mu(\D x)<\infty$ for some (and then  any) $x_0\in\R^d$.
Also,  $\mathcal{P}_{\rho,0}$  is denoted by $\mathcal{P}_\rho$. If  $\rho$ is the standard $d$-dimensional Euclidean  metric, then   $\mathcal{P}_{\rho,p}$  and $\mathcal{P}_{\rho}$ are denoted by   $\mathcal{P}_p$ and $\mathcal{P}$, respectively. For $p\ge1$ and  $\mu,\nu\in\mathcal{P}$, the $\mathcal{L}^p$-Wasserstein distance between $\mu$ and $\nu$ is defined as $$\W_{\rho,p}(\mu,\nu)\,:=\,\inf_{\Pi\in\mathcal{C}(\mu,\nu)}\left(\int_{\R^{d}\times\R^{d}}\rho(x,y)^p\,\Pi(\D x,\D y)\right)^{1/p}\,,$$ where  $\mathcal{C}(\mu,\nu)$ is the family of couplings of $\mu$ and $\nu$, that is, $\Pi\in\mathcal{C}(\mu,\nu)$ if and only if $\Pi$ is a probability measure on $\R^{d}\times\R^{d}$ having $\mu$ and $\nu$ as its marginals. It is not hard to see that $\W_{\rho,p}$ satisfies the axioms of a (not necessarily finite) distance on $\mathcal{P}_{\rho}$. The restriction  of $\W_{\rho,p}$ to  $\mathcal{P}_{\rho,p}$   defines a finite distance.
If $(\R^d,\rho)$ is a Polish space, then it is well known that $(\mathcal{P}_{\rho,p},\W_{\rho,p})$ is also a Polish space (see \cite[Theorem 6.18]{Villani-Book-2009}). Of our special interest will be the situation when $\rho$ takes the form $\rho(x,y)=f(|x-y|)$,  where
$f:[0,\infty)\to[0,\infty)$ is a non-decreasing concave function satisfying 
$f(t)=0$ if and only if $t=0$. In this situation, the corresponding Wasserstein space is denoted by $(\mathcal{P}_{f,p},\W_{f,p})$ (which does not have to be a Polish space).
Observe that if $f(t)=\Ind_{(0,\infty)}(t)$, then $\W_{f,p}(\mu,\nu)=\rVert\mu-\nu\lVert_{{\rm TV}}$ for all $p\ge1$. In the case when $f(t)=t$, the corresponding Wasserstein space is denoted just by $(\mathcal{P}_{p},\W_{p})$ (which is always a Polish space). 
For more on Wasserstein distances we refer the readers to \cite{Villani-Book-2009}.

\subsection{Main results}
The main goal of this article is to obtain (sharp) conditions for sub-geometric ergodicity of $\process{X}$ with respect to the toal variation distance and/or a class of Wasserstein distances. Before stating the main results, we introduce some notation we  need in the sequel. 
Fix $x_0\in\R^d$ and $r_0\ge0$, and put
\begin{align*}
c(x)&\,:=\,\sigma(x)\sigma(x)^{T}\,,\\
A(x)&\,:=\,\frac{1}{2}\,{\rm Tr}\,c(x)\,,\qquad x\in\R^{d}\,,\\
B_{x_0}(x)&\,:=\,\langle x-x_0,b(x)\rangle\,,\qquad x\in\R^{d}\,,\\
C_{x_0}(x)&\,:=\,\frac{\langle x-x_0,c(x)(x-x_0)\rangle}{|x-x_0|^{2}}\,,\qquad x\in\R^{d}\setminus\{x_0\}\,,\\
\gamma_{x_0}(r)&\,:=\,\inf_{|x-x_0|=r}C_{x_0}(x)\,,\qquad r>0\,,\\
\iota_{x_0}(r)&\,:=\,\sup_{|x-x_0|=r}\frac{2A(x)-C_{x_0}(x)+2B_{x_0}(x)}{C_{x_0}(x)}\,,\qquad r>0\,,\\
I_{x_0}(r)&\,:=\,\int_{r_0}^{r}\frac{\iota_{x_0}(s)}{s}ds\,,\qquad r\geq r_0\,.
\end{align*}

\begin{theorem}\label{tm:TV} Assume $\textbf{(C1)-(C3)}$, and assume that $\process{X}$ is  open-set irreducible and aperiodic. 
	Further, let $\varphi:[1,\infty)\to(0,\infty)$ be a  non-decreasing, differentiable and concave  function satisfying $\lim_{t\to \infty}\varphi'(t)=0$ and
	\begin{equation}\label{eq2}\Lambda\,:=\,\int_{r_0}^{\infty}\varphi\left(\int_{r_0}^{u}\E^{-I_{x_0}(v)}\D v+1\right)\frac{\E^{I_{x_0}(u)}}{\gamma_{x_0}(u)}\D u\,<\,\infty\end{equation}
	for some $x_0\in\R^d$ and $r_0\geq0$, 
	and assume that $c(x)$ is positive definite for all $x\in\R^d$, $|x-x_0|\geq r_0$ (hence, the above functions and the relation in \eqref{eq2} are well defined).	
	Then, $\process{X}$ admits a unique invariant  $\pi\in\mathcal{P}$ satisfying 
	$$\lim_{t\to\infty}\varphi(\varPhi^{-1}(t))\lVert\delta_x P_t-\pi\rVert_{{\rm TV}}\,=\,0\,,\qquad x\in\R^d\,,$$
	where  $$\varPhi(t)\,:=\,\int_1^{t}\frac{\D s}{\varphi(s)}\,,\qquad t\geq1\,.$$  
\end{theorem}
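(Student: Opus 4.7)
My plan is to follow the standard Foster--Lyapunov route to sub-geometric ergodicity. Concretely, I would produce a measurable $V\colon\R^d\to[1,\infty)$, a compact set $C\subset\R^d$, and a constant $K>0$ with
\begin{equation*}
\mathcal{L}V(x)\,\le\,-\varphi(V(x))+K\,\Ind_C(x)\,,\qquad x\in\R^d\,,
\end{equation*}
show that $C$ is petite, and then invoke a continuous-time Douc--Fort--Moulines--Soulier / Douc--Fort--Guillin sub-geometric ergodic theorem. Under precisely such a drift inequality together with $\phi$-irreducibility and aperiodicity that theorem delivers $\lVert\delta_xP_t-\pi\rVert_{\rm TV}=o\bigl(1/\varphi(\varPhi^{-1}(t))\bigr)$, which is the desired conclusion. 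The whole problem thus reduces to producing $V$.

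For the Lyapunov function I would take a radial ansatz $V(x)=F(s(|x-x_0|)+1)$ on $\{|x-x_0|\ge r_0\}$, where
\begin{equation*}
s(r)\,:=\,\int_{r_0}^{r}\E^{-I_{x_0}(v)}\,\D v
\end{equation*}
and $F\colon[1,\infty)\to[1,\infty)$ is smooth, increasing and concave, to be chosen; on $\overline{B_{r_0}(x_0)}$ one extends $V$ smoothly to a constant $\ge1$. Writing $W(r):=F(s(r)+1)$ and computing $\nabla V,\nabla^2 V$ gives, for $r=|x-x_0|>r_0$,
\begin{equation*}
\mathcal{L}V(x)\,=\,\frac{C_{x_0}(x)}{2}\Bigl(W''(r)+\frac{W'(r)}{r}\cdot\frac{2A(x)-C_{x_0}(x)+2B_{x_0}(x)}{C_{x_0}(x)}\Bigr)\,.
\end{equation*}
Using $C_{x_0}(x)\ge\gamma_{x_0}(r)$, the definition of $\iota_{x_0}(r)$ (and $W'\ge0$, $W''\le0$, which the construction arranges), together with the identities $s'=\E^{-I_{x_0}}$ and $s''(r)+\iota_{x_0}(r)s'(r)/r\equiv0$ (the latter built into $s$), this is bounded by
\begin{equation*}
\frac{\gamma_{x_0}(r)}{2}\,\E^{-2I_{x_0}(r)}\,F''(s(r)+1)\,.
\end{equation*}
I would then choose $F$ as the unique solution on $[1,\infty)$ of $F''=-2\varphi(F)$ with $F(1)=1$ and prescribed $F'(1)>0$: this is smooth, increasing, concave, and satisfies $F(u)\asymp u$ as $u\to\infty$ by concavity (using $\lim_{t\to\infty}\varphi'(t)=0$). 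Combining with the previous display and with $\varphi(V(x))\asymp\varphi(s(r)+1)$ up to harmless multiplicative constants (which do not affect $\varPhi^{-1}$'s asymptotics), the desired drift inequality $\mathcal{L}V\le-\varphi(V)$ outside a large ball is equivalent to the statement that $\gamma_{x_0}(r)^{-1}\,\E^{I_{x_0}(r)}\,\varphi(s(r)+1)$ is integrable on $[r_0,\infty)$, which is precisely \eqref{eq2}.

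For the remaining ingredients, the $C_b$-Feller property of $\process{X}$ recalled in the introduction (cf.\ \cite[Lemma~2.5]{Majka-2016}), combined with open-set irreducibility and aperiodicity, implies via \cite[Theorems~4.1, 4.2 and 7.1]{Tweedie-1994} and \cite[Theorem~6.1]{Meyn-Tweedie-AdvAP-II-1993} that every compact subset of $\R^d$ is petite; in particular the $C$ appearing above is petite. The drift inequality then gives positive Harris recurrence and hence existence of a unique invariant $\pi\in\mathcal{P}$, and the cited sub-geometric ergodic theorem supplies the quoted rate. The main technical obstacle is the Lyapunov construction: ensuring that $F$ can be chosen globally increasing and concave with the prescribed rate $\varphi$, verifying that $W''\le0$ holds on the whole ray so that the sign usage in the radial calculation is legitimate, and, crucially, checking that the passage from the radial bound $\tfrac12\gamma_{x_0}\E^{-2I_{x_0}}F''(s+1)$ to $-\varphi(V)$ only costs multiplicative constants, so that \eqref{eq2} is indeed the exact integrability threshold making $V$ an admissible sub-geometric Lyapunov function with this rate.
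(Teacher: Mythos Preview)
Your overall strategy (produce a radial Lyapunov function, verify the Foster--Lyapunov drift inequality of \cite{Douc-Fort-Guilin-2009}, use the $C_b$-Feller property and open-set irreducibility to get petiteness of compacts) is exactly what the paper does. The radial computation leading to
\[
\mathcal{L}V(x)\,\le\,\frac{\gamma_{x_0}(r)}{2}\,\E^{-2I_{x_0}(r)}\,F''(s(r)+1)
\]
is also correct. The gap is in the choice of $F$ and in the asserted link to \eqref{eq2}.

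\textbf{The ODE $F''=-2\varphi(F)$ has no globally increasing solution.} Since $\varphi$ is non-decreasing with $\varphi(1)>0$, any solution with $F(1)=1$, $F'(1)=a>0$ satisfies $F''(u)\le-2\varphi(1)$, hence $F'(u)\le a-2\varphi(1)(u-1)$, which becomes negative at some finite $u$. Thus $F$ is bounded and eventually decreasing, contradicting your claim that $F(u)\asymp u$. A bounded $V$ is useless here: the sub-level sets are not precompact and the drift inequality becomes vacuous. The hypothesis $\lim_{t\to\infty}\varphi'(t)=0$ does not help; what matters is that $\varphi$ itself is bounded away from zero.

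\textbf{The connection to \eqref{eq2} is not what you state.} With your ODE the bound reads $\mathcal{L}V\le-\gamma_{x_0}(r)\E^{-2I_{x_0}(r)}\varphi(V)$, and the relevant condition would be a \emph{pointwise lower bound} on $\gamma_{x_0}(r)\E^{-2I_{x_0}(r)}$, not the integrability in \eqref{eq2}.

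The paper avoids both problems by \emph{not} taking the composition ansatz $V=F(s+1)$ with an autonomous ODE for $F$. Instead it defines directly
\[
\bar\V(r)\,=\,\int_{r_0}^{r}\E^{-I_{x_0}(u)}\Bigl(\int_u^{\infty}\varphi_\Lambda\bigl(s(v)+1\bigr)\,\frac{\E^{I_{x_0}(v)}}{\gamma_{x_0}(v)}\,\D v\Bigr)\D u\,,\qquad \varphi_\Lambda:=\varphi/\Lambda\,.
\]
The inner tail integral is positive (so $\bar\V'>0$ everywhere) and, by the very definition of $\Lambda$, is bounded by $1$; hence $\bar\V(r)\le s(r)$. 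The factor $\gamma_{x_0}(r)^{-1}\E^{I_{x_0}(r)}$ is built into the construction, so that after the radial calculation one gets $\mathcal{L}\V(x)\le-\tfrac12\varphi_\Lambda(s(r)+1)\le-\tfrac12\varphi_\Lambda(\V(x))$ for $|x-x_0|\ge r_1$. Here \eqref{eq2} is precisely the finiteness of the inner integral at $u=r_0$, i.e.\ what makes the construction well defined and the normalization by $\Lambda$ possible. If you want to keep a composition picture, the correct ODE (in the variable $u=s(r)+1$) is $F''(u)=-\tilde G(u)\,\varphi_\Lambda(u)$ with the non-autonomous coefficient $\tilde G(u)=\gamma_{x_0}(r(u))^{-1}\E^{2I_{x_0}(r(u))}$; the integrability \eqref{eq2} is then exactly the condition $\int\tilde G(u)\varphi_\Lambda(u)\,\D u<\infty$ ensuring $F'>0$ globally.
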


The   proof of Theorem \ref{tm:TV} is based on the Foster-Lyapunov method for sub-geometric ergodicity of Markov processes developed in \cite{Douc-Fort-Guilin-2009}.
The method itself consists of finding an appropriate recurrent set $\mathcal{C}\in\mathcal{B}(\R^d)$, and constructing an appropriate  function $\V:\R^d\to[1,\infty)$ (the so-called Lyapunov (energy) function) contained in the domain of the extended generator $\mathcal{A}^{e}$ of the underlying Markov process $\process{M}$ (see \cite[Section 1]{Meyn-Tweedie-AdvAP-III-1993} for details), such that the Lyapunov equation \begin{equation}
\label{eq:lyap}
\mathcal{A}^{e}\V(x)\,\le\,-\varphi(\V(x))+\beta\Ind_\mathcal{C}(x)\,,\qquad x\in\R^d\,,\end{equation} holds for some $\beta\in\R$ (see \cite[Theorem 3.4]{Douc-Fort-Guilin-2009}). 
The equation in \eqref{eq:lyap} implies that 
for any $\delta>0$ the $\varphi\circ\Phi^{-1}$-moment  of the $\delta$-shifted hitting time $\tau_\mathcal{C}^\delta:=\inf\{t\ge\delta:M_t\in \mathcal{C}\}$ of $\process{M}$ on  $\mathcal{C}$ (with respect to $\mathbb{P}^x$) is finite and controlled by $\V(x)$ (see \cite[Theorem 4.1]{Douc-Fort-Guilin-2009}). 
However, this property in general does not immediately imply ergodicity of $\process{M}$. Namely, we also need to ensure that a similar property holds for any  other ``reasonable'' set. 
If $\process{M}$ is $\psi$-irreducible and  $\mathcal{C}$ is a petite
set, then indeed for any $\delta>0$  the
$\varphi\circ\Phi^{-1}$-moment of  $\tau_B^\delta,$ for any $B\in\mathcal{B}(\R^d)$ with $\psi(B)>0$,    is again   finite and controlled by $\V(x)$
(see \cite[the discussion after Theorem 4.1]{Douc-Fort-Guilin-2009}).
Recall, a set $C\in\mathcal{B}(\R^d)$ is said to be   petite if it satisfies a Harris-type minorization condition: there are a probability   measure $\eta_C$ on $\mathcal{B}((0,\infty))$ (the standard Borel $\sigma$-algebra on $(0,\infty)$) and a non-trivial measure $\nu_C$ on $\mathcal{B}(\R^d)$, such that $\int_0^\infty p(t,x,B)\eta_C(\D t)\ge\nu_C(B)$ for all $x\in C$ and $B\in\mathcal{B}(\R^d)$. Recall also that  $\psi$-irreducibility implies that the state space (in this case $(\R^d,\mathcal{B}(\R^d)$) can be covered by a countable union of petite sets (see \cite[Propositio 4.1]{Meyn-Tweedie-AdvAP-II-1993}.    Also, $C_b$-Feller property and open-set irreducibility of $\process{M}$ ensure that every compact set is  petite  (see \cite[Theorem Theorems 5.1 and 7.1]{Tweedie-1994}.
Intuitively,  petite sets take a role of singletons for Markov processes on non-discrete state spaces (see \cite[Section 4]{Meyn-Tweedie-AdvAP-II-1993} and \cite[Chapter 5]{Meyn-Tweedie-Book-2009} for details).
However, as in the discrete setting, $\process{M}$ can also show certain cyclic behavior which  causes ergodicity not to hold (see \cite[Section 5]{Meyn-Tweedie-AdvAP-II-1993} and \cite[Chapter 5]{Meyn-Tweedie-Book-2009}). By assuming aperiodicity (which excludes this type of behavior), the sub-geometric ergodicity of $\process{M}$ follows from   \cite[Theorem 1]{Fort-Roberts-2005}, which states that finiteness of the $\varphi\circ\Phi^{-1}$-moment of $\tau_\mathcal{C}^\delta$ implies sub-geometric ergodicity of $\process{M}$ with rate $r(t)=\varphi(\Phi^{-1}(t))$. 
Let us remark that, in the context of the process $\process{X}$, the relation in \eqref{eq2} is crucial in   the construction  of (actually it appears as a part of) the appropriate Lyapunov function (see the proof of Theorem \ref{tm:TV}). Thus, through this relation we control the  
$\varphi\circ\Phi^{-1}$-moment of $\tau_\mathcal{C}^\delta$ with $\mathcal{C}$ being a closed ball around the origin with large enough radius.
We also remark that using  an analogous approach as above in \cite[Chapter 4]{Khasminskii-Book-2012}  positive recurrence of the process $\process{X}$ with globally Lipschitz  coefficients and with $c(x)$ being positive definite (hence, according to Theorem \ref{tm:IRAP}, $\process{X}$ is open-set irreducible and aperiodic) has been discussed.  Based on this result, and  analyzing polynomial moments of hitting times of compact sets,
in \cite[Theorem 6]{Veretennikov-1997}  polynomial ergodicity of $\process{X}$  has been obtained. In the follow up work, by using analogous techniques the same author established polynomial ergodicity of $\process{X}$ without directly assuming $\psi$-irreducibility and aperiodicity of the process, but basing on a local irreducibility condition which we discuss below (see \cite[Theorem 6]{Veretennikov-2000}).

An alternative and, in a certain sense, more general approach to this problem is  based on a local irreducibility condition. In this approach, instead of \eqref{eq:lyap}, we assume a slightly more general form of the Lyapunov equation:
\begin{equation}\label{eq:lyap2}
\mathcal{A}^{e}\V(x)\,\le\,-\varphi(\V(x))+\beta\,,\qquad x\in\R^d\,,\end{equation} for some $\beta\in\R$, and instead of 
assuming $\psi$-irreducibility and aperiodicity of $\process{M}$, we assume the so-called (local) Dobrushin condition (also known as Markov-Dobrushin condition): the Lyapunov function $\V(x)$ has precompact sub-level sets, and for every  $\gamma>0$ there is $t_\gamma>0$ such that \begin{equation}\label{eq:dobr}\sup_{(x,y)\in \{(u,v):\,\V(u)+\V(v)\le\gamma\}} \lVert p(t_\gamma,x,\D z)-p(t_\gamma,y,\D z) \rVert_{{\rm TV}}<1\,,\end{equation} see \cite[Theorem 4.1]{Hairer-Lecture-notes-2016} (see also \cite[Chapter 1.4]{Kulik-Book-2015} and \cite[Chapter 3]{Kulik-Book-2018}).
Observe that this condition actually means that for each $(x,y)\in \{(u,v):\V(u)+\V(v)\le\gamma\}$ the probability measures  $p(t_\gamma,x,\D z)$ and $p(t_\gamma,y,\D z)$ are not mutually singular. Intuitively, the Dobrushin condition encodes $\psi$-irreducibility and aperiodicity of $\process{M}$, and petiteness of sub-level sets of $\V(x)$. By using a coupling approach with an appropriately chosen Markov coupling of $\process{M}$, say  $\process{M^c}$, the Lyapunov equation and Dobrushin condition, analogously as before, imply that  the hitting (that is, coupling) time
$\tau_{c}:=\inf\{t\ge0:M^c_t\in {\rm diag}\}$ of $\process{M^c}$ on  ${\rm diag}:=\{(x,x):x\in\R^d\}$  is a.s. finite (with respect to the probability measure  corresponding to $\process{M^c}$ with any initial position $(x,y)\in\R^d\times \R^d$). Moreover, it follows that
the $\Phi^{-1}$-moment  of  $\tau_{c}$  is finite and controlled by $\V(x)+\V(y)$. Then from the coupling inequality it follows that $\process{M}$ admits a unique invariant $\pi\in\mathcal{P},$ and $$\sup_{t\ge 0}\varphi(\Phi^{-1}(t))\lVert p(t,x,\D y) -\pi(\D y) \rVert_{{\rm TV}}<\infty\,,\qquad x\in\R^d\,,$$ (see \cite[Theorem 4.1]{Hairer-Lecture-notes-2016}, or \cite[Chapter 1.4]{Kulik-Book-2015} and \cite[Chapter 3]{Kulik-Book-2018} for the skeleton chain approach). 

Observe that \eqref{eq:lyap2} follows from \eqref{eq:lyap}. Also,  $\psi$-irreducibility and aperiodicity (together with \eqref{eq:lyap}) imply that the Dobrushin condition holds on the Cartesian product of any petite set with itself. Namely, according to \cite[Proposition 6.1]{Meyn-Tweedie-AdvAP-II-1993}, for any petite set $C$ there is $t_C>0$ such that for the measure $\eta_C$ (in the definition of petiteness)   the Dirac measure in $t_C$ can be taken (with some, possibly different, non-trivial measure $\nu_C$). Thus, $p(t_C,x,B)\ge\nu_C(B)$ for any $x\in C$ and $B\in\mathcal{B}(\R^d)$, which implies  \begin{equation}\label{eq:dob2}\sup_{(x,y)\in C\times C} \lVert p(t_C,x,\D z)-p(t_C,y,\D z) \rVert_{{\rm TV}}<1\,.\end{equation} If in addition $\process{M}$ is
$C_b$-Feller  and open-set irreducible, as we have already commented, every compact set is petite so the above relation holds for any bounded set $C$, showing that, at least in this particular situation, the approach based on the Dobrushin condition is more general than the approach based on $\psi$-irreducibility and aperiodicity.
Situations where it shows a clear advantage are discussed in \cite{Kulik-2009} and \cite{Abourashchi-Veretennikov-2010}. In the first reference the author considers
a Markov process obtained as a solution to a L\'evy-driven SDE with highly irregular coefficients and noise term, while in the second a diffusion process with highly irregular (discontinuous) drift function and uniformly elliptic diffusion coefficient has been considered. In these concrete situations it is not clear whether one can obtain $\psi$-irreducibility and aperiodicity of the processes, whereas the authors obtain \eqref{eq:dob2} for any compact set $C$ (see \cite[Theorem 1.3]{Kulik-2009} and \cite[Lemma 3]{Abourashchi-Veretennikov-2010}).  For more on ergodic properties of Markov processes based on the Dobrushin condition we refer the readers to \cite{Hairer-Lecture-notes-2016}, \cite{Kulik-Book-2015} and \cite{Kulik-Book-2018}.

In the case of the process $\process{X}$, open-set irreducibility and aperiodicity   will be   satisfied if the coefficient $c(x)$ is Lipschitz continuous and uniformly elliptic (see the discussion after Proposition \ref{p:4}). In Theorem \ref{tm:IRAP} we  show that $\process{X}$ will be open-set irreducible and aperiodic if $b(x)$ and $c(x)$  are H\"older continuous, and $c(x)$ is uniformly elliptic on an open ball only.
Let us also remark that, without further regularity assumptions on $b(x)$ and $c(x)$, it is not clear how to check the Dobrushin condition in these two situations.

The problem of sub-geometric ergodicity of diffusion processes (with respect to the total variation distance) has  already been considered in the literature (see   \cite{Douc-Fort-Guilin-2009}, \cite{Fort-Roberts-2005}, \cite{Kulik-Book-2015},
\cite{Kulik-Book-2018}, \cite{Sandric-ESAIM-2016}, \cite{Veretennikov-1997} and \cite{Veretennikov-2000}. 
In these works it has been shown that $\process{X}$ will be sub-\linebreak geometrically
ergodic with rate $t^{\alpha/(1-\alpha)}$ (that is, $\varphi(t)=t^\alpha$),  $\alpha\in(0,1)$, if there exist $\gamma>0$, $\Gamma>0$ and $r_0\ge0$, such that \begin{equation}\label{eq:subgeo}
A(x)-\left( 1-\frac{\gamma}{2}\right)C_0(x)+B_0(x)\,\le\,-\Gamma|x|^{\gamma\alpha-\gamma+2}\,,\qquad |x|\geq r_0\,.
\end{equation} 
However, this  result is far for being sharp (optimal). Namely, in Proposition \ref{p1} we show that \eqref{eq:subgeo}  implies \eqref{eq2}, and in Example \ref{e:TV} we give an example of a diffusion process satisfying conditions from Theorem \ref{tm:TV}, but not the condition in \eqref{eq:subgeo}.

On the other hand, in the case when $c(x)$ is not regular enough, the topology induced by the total variation distance becomes too ``rough", that is, it cannot completely capture the singular behavior of $\process{X}$. In oder words, $p(t,x,\D y)$ cannot converge to the underlying invariant probability measure  (if it exists) in this topology, but in a weaker sense  (see \cite{Sandric-RIM-2017} and the references therein). 
Therefore, in this situation, we naturally resort to  Wasserstein distances which, in a certain sense, induce a finer topology, that is, convergence with respect to a Wasserstein distance implies the weak convergence of probability measures (see \cite[Theorems 6.9 and 6.15]{Villani-Book-2009}).

\begin{theorem}\label{tm:WASS1}
	Let $\sigma(x)\equiv\sigma$ be an arbitrary $d\times n$ matrix, and assume $\textbf{(C1)-(C3)}$.  Further, let  $p\ge1$ and let $f,\psi:[0,\infty)\to[0,\infty)$ 
	be such that
	\begin{itemize}
		\item [(i)] $f(t)$ is concave, non-decreasing, absolutely continuous on $[t_0,t_1]$ for any $0<t_0<t_1<\infty$, and $f(t)=0$ if and only if $t=0$;
		\item[(ii)] $\psi(t)$ is convex and $\psi(t)=0$ if and only if $t=0$;
		\item[(iii)] there are $\gamma>0$, $\Gamma>0$ and $t_0>0$, such that
		$f(t_0)\le \gamma$ and
		\begin{equation}\label{eqWASS1}f'(|x-y|)\langle x-y,b(x)-b(y)\rangle\,\le\,\left\{\begin{array}{cc}
		-\Gamma|x-y|\,\psi(f(|x-y|))\,, & f(|x-y|)\le \gamma\,, \\
		0\,,& f(|x-y|)> \gamma\,,
		\end{array}\right.\end{equation} a.e. on $\R^d$.
	\end{itemize}  
	Then, 
	\begin{itemize}
		\item [(a)] for all $x,y\in\R^d$, $f(|x-y|)\le \gamma$, it holds that \begin{equation}\label{eqWASS2}\W_{f,p}(\delta_x P_t,\delta_y P_t)\,\le\, \Psi_{f(|x-y|)}^{-1}(\Gamma t)\,,\qquad t\geq 0\,,\end{equation} where $\Psi_\kappa(t):=\int_t^\kappa\frac{\D s}{\psi(s)}$ for $\kappa>0$ and $t\in(0,\kappa].$
		\item [(b)] for all $x,y\in\R^d$, $f(|x-y|)\le \gamma$, and all $\kappa\ge \gamma$ it holds that \begin{equation}\label{eqWASS3}\W_{f,p}(\delta_x P_t,\delta_y P_t)\,\le\, \Psi_{\kappa}^{-1}(\Gamma t)\,,\qquad t\geq 0\,.\end{equation} In addition, if $\Psi_\infty(t):=\int_t^\infty\frac{\D s}{\psi(s)}<\infty$ for  $t\in(0,\infty),$ then  \begin{equation}\label{eqWASS4}\W_{f,p}(\delta_x P_t,\delta_y P_t)\,\le\, \Psi_{\infty}^{-1}(\Gamma t)\,,\qquad t\geq 0\,.\end{equation}
		\item [(c)] for any $x,y\in\R^d$ it holds that \begin{equation}\label{eqWASS5}\W_{f,p}(\delta_x P_t,\delta_y P_t)\,\le\, \lceil\delta|x-y|\rceil\Psi_{\gamma}^{-1}(\Gamma t)\,,\qquad t\geq 0\,,\end{equation} where  $\delta:=\inf\{t>0:f(t^{-1})\le \gamma\}$ and  $\lceil u\rceil$ denotes the least integer greater than or equal to $u\in\R.$ Also, according to (b), $\Psi_{\gamma}^{-1}(\Gamma t)$ in \eqref{eqWASS5} can be replaced by $\Psi_{\kappa}^{-1}(\Gamma t)$ for any $\kappa\ge \gamma$, and by $\Psi_{\infty}^{-1}(\Gamma t)$ if $\Psi_{\infty}(t)<\infty$ for $t\in(0,\infty)$.  
	\end{itemize}  
\end{theorem}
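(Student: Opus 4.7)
The plan rests on the \emph{synchronous coupling}: since the diffusion coefficient is constant, driving both copies $X^x$ and $X^y$ by the same Brownian motion $B$ cancels the noise, and
$$Z_t\,:=\,X^x_t - X^y_t\,,\qquad \dot Z_t\,=\,b(X^x_t)-b(X^y_t)\,,\qquad Z_0\,=\,x-y\,,$$
is a random ODE with absolutely continuous sample paths. The joint law of $(X^x_t, X^y_t)$ is a coupling of $\delta_x P_t$ and $\delta_y P_t$, so
$$\W_{f,p}(\delta_x P_t,\delta_y P_t)\,\le\,\bigl(\mathbb{E}[f(|Z_t|)^p]\bigr)^{1/p}\,,$$
and everything reduces to a pathwise bound on $f(|Z_t|)$; once this bound turns out to be deterministic, the expectation is vacuous.

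For (a), fix $\omega$ and assume $f(|x-y|)\le\gamma$. Away from the diagonal one has $\frac{d}{dt}|Z_t|=\langle Z_t,b(X^x_t)-b(X^y_t)\rangle/|Z_t|$, and by absolute continuity of $f$ and the chain rule,
$$\frac{d}{dt}f(|Z_t|)\,=\,\frac{f'(|Z_t|)\,\langle Z_t,b(X^x_t)-b(X^y_t)\rangle}{|Z_t|}\,.$$
Plugging $(X^x_t,X^y_t)$ into hypothesis (iii) yields $\dot F_t \le -\Gamma\psi(F_t)$ whenever $F_t := f(|Z_t|)\le\gamma$, and $\dot F_t\le 0$ otherwise, so $F_t$ never exceeds $\gamma$. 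Comparison with $\dot g=-\Gamma\psi(g)$, $g(0)=f(|x-y|)$, solved by separation of variables to $\Psi_{f(|x-y|)}(g(t))=\Gamma t$, then gives $f(|Z_t|)\le\Psi^{-1}_{f(|x-y|)}(\Gamma t)$ pathwise, which is (a). The set $\{Z_t=0\}$ is absorbing under the synchronous coupling and carries $f=0$, so no issue arises there.

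Part (b) is a monotonicity observation: increasing the upper endpoint from $\kappa$ to $\kappa'\ge\kappa$ adds the nonnegative quantity $\int_\kappa^{\kappa'}\D s/\psi(s)$ to $\Psi_\kappa$, so $\Psi_{\kappa'}^{-1}(\Gamma t)\ge\Psi_\kappa^{-1}(\Gamma t)$; the case $\kappa=\infty$ is the monotone limit provided $\Psi_\infty$ is finite. For (c), I would use that $(x,y)\mapsto f(|x-y|)$ is itself a metric on $\R^d$ (concave $f$ with $f(0)=0$), so $\W_{f,p}$ satisfies the triangle inequality by the standard gluing plus Minkowski argument. Taking $N:=\lceil\delta|x-y|\rceil$ equispaced points $x=z_0,z_1,\dots,z_N=y$ on the segment between $x$ and $y$, each pair satisfies $|z_i-z_{i+1}|\le 1/\delta$, hence $f(|z_i-z_{i+1}|)\le f(1/\delta)\le\gamma$ by the definition of $\delta$ and continuity of $f$, and (b) applied to each term yields the claimed bound.

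The main obstacle is the rigorous passage from the a.e.\ inequality in (iii) to a pointwise one along trajectories. This is handled by combining the continuity of $b$ (implied by (C2)) with the concavity of $f$ (which provides well-defined one-sided derivatives and absolute continuity on compact subintervals of $(0,\infty)$), together with a density argument off the exceptional null set; alternatively one can regularize $b$ and $f$, derive (a)--(c) for the regularization, and pass to the limit using joint continuity of $\W_{f,p}$. Because the pathwise bounds are ultimately deterministic, no further moment estimate is required.
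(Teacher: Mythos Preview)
Your proposal is correct and follows essentially the same route as the paper: synchronous coupling so that the noise cancels and the difference solves a random ODE, a pathwise nonlinear Gronwall/comparison argument for (a) (the paper isolates this as a separate lemma), monotonicity of $\Psi_\kappa^{-1}$ in $\kappa$ for (b), and the triangle inequality along equispaced points on the segment $[x,y]$ for (c). The only cosmetic differences are that the paper explicitly glues the two solutions after the coupling time via the strong Markov property where you instead invoke pathwise uniqueness to argue $\{Z_t=0\}$ is absorbing, and that your aside claiming \textbf{(C2)} implies continuity of $b$ is not correct (it is only a one-sided condition)---though the paper itself does not address the a.e.\ issue you raise, so this is not a gap relative to its argument.
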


Observe that $f(t)$ is $\mathcal{B}((0,\infty))$-measurable, implying that the relation in \eqref{eqWASS2} is well defined. 
The proof of Theorem  \ref{tm:WASS1} is based on the  so-called synchronous coupling method (see \cite[Example 2.16]{Chen-Book-2005} for details) and the
asymptotic flatness   condition given in  \eqref{eqWASS1}. Let us remark that in a special case when $p=2$ and $f(t)=\psi(t)=t$ in \cite{Renesse-Sturm-2005} it has been shown that the relation in  \eqref{eqWASS2}  (observe that in this case  $\Psi^{-1}_{f(|x-y|)}(\Gamma t)=|x-y|\E^{-\Gamma t}$) is equivalent to the    asymptotic flatness  condition (in the sense of \cite{Arapostathis-Borkar-Ghosh_Book-2012}) 
\begin{equation}\label{eqWASS6}\langle x-y,b(x)-b(y)\rangle\,\le\,
-\Gamma|x-y|^2\,,\qquad x,\,y\in\R^d\,.\end{equation} Even though at first sight  the condition in  \eqref{eqWASS1} seems to be less restrictive than the condition in \eqref{eqWASS6}, they are actually equivalent. This can be easily observed by taking an equidistant subdivision of the line segment connecting $x$ and $y$, such that the distance between consecutive points is strictly less than $\gamma$, and then applying triangle inequality. 
On the other hand, in the case when $\psi(t)$ is not  the identity function this does not hold in general. Namely, $\psi(t)$ is not sub-additive, but super-additive. 
A typical example of a drift function (in dimension $d=1$)  satisfying \eqref{eqWASS1} (and \eqref{eqWASS7}), but not \eqref{eqWASS6}, is $b(x)=-{\rm sgn}(x)|x|^p$, $p> 1$, together with $f(t)=t$ and $\psi(t)=|t|^p$ (see Example \ref{ex1}). More generally,  no drift function that is sub-linear near the origin can satisfy \eqref{eqWASS6}, but it might satisfy 
\eqref{eqWASS1}.

Finally, as a consequence of Theorem \ref{tm:WASS1} we conclude the following.

\begin{theorem}\label{tm:WASS2}
	In addition to the assumptions of Theorem \ref{tm:WASS1} with $f(t)=t$, assume
	\begin{equation}\label{eqWASS7}\langle x-y,b(x)-b(y)\rangle\,\le\,
	-\Gamma|x-y|\,\psi(|x-y|)\,,\qquad x,\,y\in\R^d\,.\end{equation} 
	Then, the process $\process{X}$ admits a unique invariant  $\pi\in\cap_{p\ge1}\mathcal{P}_p$, and for any $\kappa>0$, $p\ge1$ and $\mu\in\mathcal{P}_p$, 
	\begin{equation}\label{eqWASS8}\W_p(\mu P_t,\pi)\,\le\, \left(\frac{\W_p(\mu,\pi)}{\kappa}+1\right)\Psi_\kappa^{-1}(\Gamma t)\,,\qquad t\geq 0\,.\end{equation} 
\end{theorem}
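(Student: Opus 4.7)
The plan is to obtain Theorem~\ref{tm:WASS2} as a fairly soft consequence of the single-point contraction estimate Theorem~\ref{tm:WASS1}(c). The first and key observation is that under \eqref{eqWASS7} the whole of hypothesis~(iii) of Theorem~\ref{tm:WASS1} with $f(t)=t$ holds for \emph{every} $\gamma=\kappa>0$: the bound \eqref{eqWASS7} is global and non-positive, and one can take $t_0=\kappa$ so that $f(t_0)\le\gamma$ trivially. Hence, for any $\kappa>0$, any $p\ge 1$ and any $x,y\in\R^d$, Theorem~\ref{tm:WASS1}(c) yields
$$\W_p(\delta_x P_t,\delta_y P_t)\,\le\,\Bigl\lceil\tfrac{|x-y|}{\kappa}\Bigr\rceil\Psi_\kappa^{-1}(\Gamma t),\qquad t\ge 0. \qquad (\star)$$

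To lift $(\star)$ to arbitrary $\mu,\nu\in\mathcal{P}_p$, I would use the gluing lemma: for any coupling $\Pi\in\mathcal{C}(\mu,\nu)$, integrating fiber-wise the $\W_p$-optimal couplings of $\delta_x P_t$ and $\delta_y P_t$ produces a coupling of $\mu P_t$ and $\nu P_t$, so
$$\W_p(\mu P_t,\nu P_t)^p\,\le\,\int\W_p(\delta_x P_t,\delta_y P_t)^p\,\Pi(\D x,\D y).$$
Combining this with $(\star)$, the elementary inequality $\lceil u\rceil\le u+1$ and Minkowski's inequality, and finally taking the infimum over $\Pi$, gives the master contraction estimate
$$\W_p(\mu P_t,\nu P_t)\,\le\,\Bigl(\tfrac{\W_p(\mu,\nu)}{\kappa}+1\Bigr)\Psi_\kappa^{-1}(\Gamma t). \qquad (\star\star)$$
Next I would establish a uniform moment bound. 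Taking $y=0$ in \eqref{eqWASS7} yields $\langle x,b(x)\rangle\le |x||b(0)|-\Gamma|x|\psi(|x|)$, and since $\psi$ is convex with $\psi(0)=0$ the ratio $\psi(t)/t$ is non-decreasing and strictly positive for $t>0$; hence $|x|\psi(|x|)\ge c|x|^{2}$ outside a compact set. Because $\sigma$ is constant, a standard Lyapunov computation with $V(x)=(1+|x|^2)^{m}$, $m\in\N$, then gives $\mathcal{L}V\le -\lambda V+M$ and thus $\sup_{t\ge 0}\E^\mu[(1+|X_t|^2)^{m}]<\infty$ for every $\mu\in\mathcal{P}_{2m}$. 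In particular, $\sup_{h\ge 0}\W_p(\mu,\mu P_h)<\infty$ for every $p\ge 1$ and every $\mu\in\mathcal{P}_p$.

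The semigroup identity $\mu P_t=(\mu P_{t-s})P_s$ together with $(\star\star)$ applied to the pair $(\mu,\mu P_{t-s})$ gives, for $0\le s\le t$,
$$\W_p(\mu P_s,\mu P_t)\,\le\,\Bigl(\tfrac{\W_p(\mu,\mu P_{t-s})}{\kappa}+1\Bigr)\Psi_\kappa^{-1}(\Gamma s);$$
by the moment bound the prefactor is controlled uniformly in $t-s$, while $\Psi_\kappa^{-1}(\Gamma s)\to 0$, so $\{\mu P_t\}_{t\ge 0}$ is Cauchy in $(\mathcal{P}_p,\W_p)$. Completeness of this Polish space produces a limit $\pi$, and passing to the limit in the semigroup relation (using the $\W_p$-continuity of $P_s$, a direct corollary of $(\star\star)$) shows $\pi P_s=\pi$; taking $\mu=\delta_0$ and letting $p$ vary places $\pi$ in $\bigcap_{p\ge 1}\mathcal{P}_p$. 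Uniqueness follows immediately from $(\star\star)$ applied to any two invariant measures in some $\mathcal{P}_p$, and \eqref{eqWASS8} is simply $(\star\star)$ specialised to $\nu=\pi$ together with $\pi P_t=\pi$. The only step that is not purely formal, and hence the main obstacle, is verifying that the convexity of $\psi$ together with $\psi(0)=0$ already forces enough growth of $\psi$ at infinity to run the Lyapunov argument simultaneously for every $p\ge 1$; all the remaining steps are mechanical consequences of Theorem~\ref{tm:WASS1}(c) and the gluing lemma.
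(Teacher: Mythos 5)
Your proof is correct in its essentials but takes a genuinely different route from the paper for the existence and moment parts. The paper first establishes existence of an invariant probability measure via the mean-occupation-time criterion of Meyn--Tweedie \cite[Theorem 3.1]{Meyn-Tweedie-AdvAP-II-1993} (using the Lyapunov function $\V(x)=|x|^2$ and \cite[Theorem 1.1]{Meyn-Tweedie-AdvAP-III-1993}), then bounds the invariant moments through \cite[Theorem 4.3]{Meyn-Tweedie-AdvAP-III-1993} with $\V_p(x)=|x|^p$; uniqueness and \eqref{eqWASS8} then come from the master contraction $(\star\star)$, i.e.\ Remark \ref{rm3}(iii). You instead construct $\pi$ directly as a $\W_p$-limit of $\{\delta_0 P_t\}_{t\ge0}$, relying on completeness of $(\mathcal{P}_p,\W_p)$ and a uniform $p$-moment bound from the Lyapunov inequality; this is more self-contained, replaces the Meyn--Tweedie machinery with the metric geometry of Wasserstein space, and in fact makes the contraction estimate do almost all the work.

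Two small repairs are needed. First, you restrict the Lyapunov function to $V(x)=(1+|x|^2)^m$ with $m\in\N$, which only covers $\mu\in\mathcal{P}_{2m}$; to handle all $p\ge1$ simply take the real exponent $V(x)=(1+|x|^2)^{p/2}$, which is still $C^2$, and the same drift/second-order comparison goes through since $\psi(t)/t$ is non-decreasing by convexity of $\psi$ with $\psi(0)=0$. Second, the statement that ``$\W_p$-continuity of $P_s$ is a direct corollary of $(\star\star)$'' is not accurate: $(\star\star)$ has the additive constant $+1$, so it only bounds $\W_p(\mu P_s,\nu P_s)$ away from zero, not by $\W_p(\mu,\nu)$. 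What you actually need is the non-expansivity $\W_p(\mu P_s,\nu P_s)\le\W_p(\mu,\nu)$, which under \eqref{eqWASS7} follows immediately from the synchronous coupling (this is precisely Remark \ref{rm3}(vi)); with that in hand the passage to the limit in $\pi P_s=\pi$ works, and alternatively one can bound $\W_p((\delta_0 P_t)P_s,\pi P_s)\le\W_p(\delta_0 P_t,\pi)\to0$ directly. Both points are readily patched and do not affect the structure of the argument.
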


Let us also remark that if $\sigma(x)\equiv\sigma$ is quadratic and non-singular matrix, and $b(x)$ satisfies the following  asymptotic flatness condition 
\begin{equation}\label{eqWASS9}\langle x-y,b(x)-b(y)\rangle\leq\left\{\begin{array}{cc}
\Gamma_1|x-y|^2\,, & |x-y|\le \Delta\,, \\
-\Gamma_2|x-y|^2\,,& |x-y|\ge \Delta\,,
\end{array}\right.\qquad x,\,y\in\R^d\,,\end{equation} for some $\Gamma_1>0$, $\Gamma_2>0$ and $\Delta>0$, 
by using the so-called coupling by reflection method (see \cite[Example 2.16]{Chen-Book-2005} for details), in \cite{Eberle-2011} (see also \cite{Eberle-2015} and \cite{Luo-Wang-2016}) it has been shown that there is a concave function $f(t)$ (given explicitly in terms of the constants $\Gamma_1,$ $\Gamma_2$ and $\Delta$, and coefficients $\sigma$ and $b(x)$) defining a metric $\rho(x,y)=f(|x-y|)$ on $\R^d$ under which $\process{X}$ satisfies contraction property of the type \eqref{eqWASS5} with geometric rate of convergence, and geometric ergodicity property of the type \eqref{eqWASS8}. As we have already commented, $b(x)=-{\rm sgn}(x)|x|^p$, $p> 1$, satisfies \eqref{eqWASS1} and \eqref{eqWASS7}, but clearly it also  satisfies \eqref{eqWASS9}. However, in the later case, in order to conclude contractivity or ergodicity  it is necessary to assume non-singularity of $\sigma$, while in the former case we can allow $\sigma$ to be singular. Let us also remark that in the case when $\sigma$ is non-singular, by taking $y=0$ in \eqref{eqWASS9}, one can easily see that $\process{X}$ is geometrically ergodic with respect to the total variation distance (see Proposition \ref{p:4}).

\subsection{Literature review}

Our work relates to the active research on ergodicity properties of Markov processes, and the vast literature on SDEs. 
In \cite{Arapostathis-Borkar-Ghosh_Book-2012}, \cite{Bhattacharya-1978}, \cite{Kulik-Book-2015}, \cite{Kulik-Book-2018}, \cite{Stramer-Tweedie-1997} and \cite{Veretennikov-1997}   ergodicity properties with respect to the total variation distance of diffusion processes are established using the Foster-Lyapunov(-type) method. In this article, we  generalize the ideas from \cite{Bhattacharya-1978} (see also \cite[Chapter 9]{Friedman-Book-1975} and \cite[Supplement]{Khasminskii-1960}) and obtain sharp conditions
which ensure  ergodicity properties with sub-geometric rates of convergence of  this class of processes. Furthermore, we adapt these results and discuss also ergodicity properties of a class of diffusion processes with jumps and a class of Markov processes obtained through the Bochner's subordination. These results are related to 
\cite{Albeverio-Brzezniak-Wu-2010}, \cite{Arapostathis-Pang-Sandric-2019}, \cite{Deng-Schilling-Song-2017, Deng-Schilling-Song-Erratum-2018} \cite{Douc-Fort-Guilin-2009}, \cite{Down-Meyn-Tweedie-1995},
\cite{Fort-Roberts-2005}, \cite{Kevei-2018},  \cite{Kulik-2009},   \cite{Masuda-2007, Masuda-Erratum-2009}, \cite{Meyn-Tweedie-AdvAP-II-1993}, \cite{Meyn-Tweedie-AdvAP-III-1993}, \cite{Sandric-ESAIM-2016}, \cite{Wang-2008}, \cite{FYWang-2011}, \cite{Wang-2011}   and \cite{Wee-1999}  
where the ergodicity properties  of  general 
Markov processes are established using the  Foster-Lyapunov method again.

The studies on ergodicity properties with respect to the total variation distance assume that 
the Markov processes are irreducible and aperiodic. This is satisfied if the process does not show a singular behavior in its motion, that is, its diffusion part is non-singular and/or its jump part shows enough jump activity.  
For Markov processes that do not converge in total variation, 
ergodic properties under  Wasserstein distances are studied 
since they may converge weakly under certain conditions, see \cite{Bolley-Gentil-Guillin-2012}, 
\cite{Butkovsky-2014}, \cite{Eberle-2011}, \cite{Eberle-2015}, \cite{Hairer-Mattingly-Scheutzow-2011}, \cite{Luo-Wang-2016}, \cite{Majka-2017}, \cite{Renesse-Sturm-2005} and \cite{Wang-2016}. 
In \cite{Bolley-Gentil-Guillin-2012} and \cite{Renesse-Sturm-2005}, the coupling approach and the asymptotic flatness property in \eqref{eqWASS6} are employed to establish geometric
contractivity and  ergodicity of the semigroup 
of a diffusion process with possibly singular diffusion coefficient, with respect to a Wasserstein distance. 
However, in many situations the condition in \eqref{eqWASS6} is too restrictive. For example,  as we have already commented, drift functions which are sub-linear near the origin do not satisfy \eqref{eqWASS6}. 
The first step in relaxing this condition has been recently
done in \cite{Eberle-2011} (see also \cite{Eberle-2015} and \cite{Luo-Wang-2016}) where \eqref{eqWASS6} is replaced by the asymptotic flatness  property in \eqref{eqWASS9},  but at
the price of assuming that the diffusion coefficient is non-singular. Under these assumptions geometric
contractivity and  ergodicity of the semigroup of a diffusion process with respect to a Wasserstein distance 
are again established.
In this article, we  relax \eqref{eqWASS6} to the asymptotic flatness conditions in \eqref{eqWASS1} and \eqref{eqWASS7}, and obtain  sub-geometric
contractivity and sub-geometric ergodicity of the semigroup of a diffusion process, with possibly singular diffusion coefficient, with respect to a Wasserstein distance. At the end,  we again discuss  ergodicity properties, but with respect to Wasserstein distances, of a class of diffusion processes with jumps and a class of Markov processes obtained through the Bochner's subordination.

At the end we  remark that an analogous results, with respect to the total variation distance and Wasserstein distances, have also been obtained in the discrete-time setting, see   \cite{Douc-Moulines-Priouret-Soulier_Book-2018}, \cite{Durmus-Fort-Moulines-2016}, \cite{Durmus-Fort-Moulines-Soulier-2004}, \cite{Fort-Moulines-2003}, \cite{Kulik-Book-2015}, \cite{Kulik-Book-2018}, \cite{Meyn-Tweedie-Book-2009}, \cite{Veretennikov-1997}, \cite{Veretennikov-2000}, \cite{Tuominen-Tweedie-1994} and the references therein. 

\subsection{Organization of the article}
In the next section, we prove Theorem \ref{tm:TV}, and discuss open-set irreducibility and aperiodicity of diffusion processes. Also, we discuss sub-geometric ergodicity  of two classes of Markov processes with jumps. 
In Section \ref{s3}, we prove Theorems \ref{tm:WASS1} and  \ref{tm:WASS2}, and again discuss sub-geometric ergodicity of  Markov processes with jumps, but with respect to Wasserstein distances.

\section{Ergodicity with respect to the total variation distance} \label{s2}

In this section, we first prove Theorem \ref{tm:TV}.
Then, we discuss open-set irreducibility and aperiodicity of diffusion processes. Finally, at the end, we discuss  sub-geometric ergodicity  of two classes of Markov processes with jumps.  

\subsection{Ergodicity of diffusion processes}

We  start with the proof of Theorem \ref{tm:TV}.

\begin{proof}[Proof of Theorem \ref{tm:TV}] Set $ \varphi_\Lambda(t)=\varphi(t)/\Lambda,$ where $\Lambda$ is given in \eqref{eq2},  and observe that $\varphi_\Lambda(t)$ has the same properties as $\varphi(t)$. Next, define
	$$\bar{\V}(r)\,:=\,\int_{r_0}^{r}\E^{-I_{x_0}(u)}\int_u^{\infty}\varphi_\Lambda\left(\int_{r_0}^{v}\E^{-I_{x_0}(w)}\D w+1\right)\frac{\E^{I_{x_0}(v)}}{\gamma_{x_0}(v)}\D v\,\D u\,,\qquad r\geq r_0\,.$$  Clearly, for  $r\ge r_0$ it holds that
	\begin{equation}
	\label{E:en}\bar{\V}(r)\,\le\,\int_{r_0}^{r}\E^{-I_{x_0}(u)}\D u\,, \end{equation}
	and \begin{align*}\bar{\V}'(r)&\,=\,\E^{-I_{x_0}(r)}\int_r^{\infty}\varphi_\Lambda\left(\int_{r_0}^{u}\E^{-I_{x_0}(v)}\D v+1\right)\frac{\E^{I_{x_0}(u)}}{\gamma_{x_0}(u)}\D u\\
	\bar{\V}''(r)&\,=\,-\frac{\iota_{x_0}(r)}{r}\E^{-I_{x_0}(r)}\int_r^{\infty}\varphi_\Lambda\left(\int_{r_0}^{u}\E^{-I_{x_0}(v)}\D v+1\right)\frac{\E^{I_{x_0}(u)}}{\gamma_{x_0}(u)}\D u-\frac{\varphi_\Lambda\left(\int_{r_0}^{r}\E^{-I_{x_0}(u)}\D u+1\right)}{\gamma_{x_0}(r)}\,.
	\end{align*}
	Further, fix $r_1>r_0$  and let $\V:\R^{d}\to[0,\infty)$, $\V\in C^2(\R^d)$, be such that $\V(x)=\bar{\V}(|x-x_0|)+1$ for $x\in\R^d,$ $|x-x_0|\ge r_1.$ Now, for $x\in\R^{d}$, $|x-x_0|\ge r_1$, we have
	\begin{align*}
	\mathcal{L}\V(x)&\,=\,\frac{1}{2}C_{x_0}(x)\bar{\V}''(|x-x_0|)+\frac{\bar{\V}'(|x-x_0|)}{2|x-x_0|}(2A(x)-C_{x_0}(x)+2B_{x_0}(x))\\
	&\,\le\, -\frac{1}{2}\varphi_\Lambda\left(\int_{r_0}^{|x-x_0|}\E^{-I_{x_0}(u)}\D u+1\right)\\
	&\,\le\,-\frac{1}{2}\varphi_\Lambda(\V(x))\,,
	\end{align*}
	where in the final step we employed the fact that $\varphi(t)$ (that is, $\varphi_\Lambda(t)$) is non-decreasing and \eqref{E:en}. Thus, we have obtained the relation in (3.11) in \cite[Theorem 3.4 (i)]{Douc-Fort-Guilin-2009} with $\phi(t)=\varphi_\Lambda(t)$, $C=\bar B_{r_1}(x_0)$ (the topological closure of the open ball $B_{r_1}(x_0)$), and $b=\sup_{x\in C}|\mathcal{L}V(x)|$. Now, \cite[Theorems 5.1 and 7.1]{Tweedie-1994}, together with open-set irreducibility, aperiodicity and $C_b$-Feller property  of $\process{X}$,  imply that $\process{X}$ meets the conditions of \cite[Theorem 3.2]{Douc-Fort-Guilin-2009} with $\Psi_1(t)=t$ and $\Psi_2(t)=1$, which concludes the proof.  
\end{proof}

As a direct consequence of Theorem \ref{tm:TV} we conclude the following.
\begin{corollary}\label{c:TV}
	If in Theorem \ref{tm:TV} we take $\varphi(t)=t^{\alpha}$ with $\alpha\in(0,1)$, then  $\process{X}$  is sub-geometrically
	ergodic with rate $t^{\alpha/(1-\alpha)}.$
\end{corollary}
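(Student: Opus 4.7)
The plan is to apply Theorem \ref{tm:TV} directly with $\varphi(t)=t^\alpha$, and simply translate the abstract rate $\varphi(\varPhi^{-1}(t))$ into the explicit power $t^{\alpha/(1-\alpha)}$ via a one-line asymptotic computation; there is no real obstacle, the content is a calculation.

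First I would check that $\varphi(t)=t^\alpha$, $\alpha\in(0,1)$, satisfies all the standing hypotheses imposed on $\varphi$ in Theorem \ref{tm:TV}: it maps $[1,\infty)$ into $(0,\infty)$, is differentiable, strictly increasing (since $\varphi'(t)=\alpha t^{\alpha-1}>0$), concave (since $\varphi''(t)=\alpha(\alpha-1)t^{\alpha-2}\le0$), and satisfies $\lim_{t\to\infty}\varphi'(t)=\lim_{t\to\infty}\alpha t^{\alpha-1}=0$ because $\alpha-1<0$. The remaining hypothesis \eqref{eq2} is carried over from Theorem \ref{tm:TV} itself and does not need to be re-verified here.

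Next I would compute $\varPhi$ explicitly. For $t\ge1$,
\begin{equation*}
\varPhi(t)\,=\,\int_1^t\frac{\D s}{s^\alpha}\,=\,\frac{t^{1-\alpha}-1}{1-\alpha}\,,
\end{equation*}
so as $t\to\infty$ we have $\varPhi(t)\sim t^{1-\alpha}/(1-\alpha)$. Inverting, $\varPhi^{-1}(u)\sim\bigl((1-\alpha)u\bigr)^{1/(1-\alpha)}$ as $u\to\infty$. Consequently
\begin{equation*}
\varphi(\varPhi^{-1}(t))\,=\,\bigl(\varPhi^{-1}(t)\bigr)^\alpha\,\sim\,(1-\alpha)^{\alpha/(1-\alpha)}\,t^{\alpha/(1-\alpha)}\,,\qquad t\to\infty\,.
\end{equation*}

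Applying Theorem \ref{tm:TV} now yields a unique invariant probability measure $\pi$ together with
$\lim_{t\to\infty}\varphi(\varPhi^{-1}(t))\lVert\delta_x P_t-\pi\rVert_{{\rm TV}}=0$ for every $x\in\R^d$; combined with the asymptotic above, this gives the ergodicity statement with rate $r(t)=t^{\alpha/(1-\alpha)}$ (up to a harmless positive multiplicative constant, which can be absorbed into $r$ since we only need $r:[0,\infty)\to[1,\infty)$ nondecreasing). Finally, to conclude sub-geometric ergodicity in the sense defined in Subsection \ref{ss1}, I would note that $\ln r(t)/t=\frac{\alpha}{1-\alpha}\cdot\ln t/t\to0$ as $t\to\infty$, so the rate is indeed sub-geometric. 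This closes the argument.
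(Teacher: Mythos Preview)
Your proof is correct and follows the same approach as the paper, which simply states the corollary as a ``direct consequence of Theorem \ref{tm:TV}'' without writing out any details. You have merely made explicit the elementary computation $\varPhi(t)=(t^{1-\alpha}-1)/(1-\alpha)$ and its inversion that the paper leaves to the reader.
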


If $\varphi(t)$ is bounded then the condition in \eqref{eq2} reduces to $$\int_{r_0}^\infty\frac{\E^{I_{x_0}(u)}}{\gamma_{x_0}(u)}\D u\,<\,\infty\,,$$  which is exactly the  condition for ergodicity obtained in \cite[Theorem 3.5]{Bhattacharya-1978} (see also \cite[Theorem 1.2]{Wang-2008} and \cite[Chapter IV]{Mandl-Book-1968} for the one-dimensional case).
By taking $\varphi(t)=t$  one expects to obtain geometric ergodicity of $\process{X}$. However, we 
cannot  apply Theorem \ref{tm:TV} directly since $\lim_{t\to\infty}\varphi'(t)\neq0$. 
By employing analogous  ideas as in  Theorem \ref{tm:TV}, in \cite[Theorem 1.3]{Wang-2008} the author proves geometric ergodicity of $\process{X}$ under \eqref{eq2} (with $\varphi(t)=t$) in the one-dimensional case. In what follows we give a multi-dimensional version of this result.

\begin{proposition}\label{p:4} If in Theorem \ref{tm:TV}
	$\liminf_{t\to \infty}\varphi'(t)>0$,
	then $\process{X}$ is geometrically ergodic.	
\end{proposition}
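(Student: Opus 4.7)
The plan is to reuse essentially the same Lyapunov construction as in the proof of Theorem \ref{tm:TV}, and to check that, under the stronger hypothesis $\liminf_{t\to\infty}\varphi'(t)>0$, the Foster--Lyapunov inequality derived there upgrades from sub-geometric to geometric form. More precisely, with $\bar \V$ and $\V\in C^2(\R^d)$ defined exactly as in that proof, the computation carried out there already establishes, for every $x$ with $|x-x_0|\ge r_1$,
$$\mathcal{L}\V(x)\,\le\,-\tfrac{1}{2}\varphi_\Lambda(\V(x))\,.$$
Nothing in that computation uses $\lim_{t\to\infty}\varphi'(t)=0$; that assumption only enters at the very end, when \cite[Theorem 3.2]{Douc-Fort-Guilin-2009} is invoked to convert the Lyapunov drift into sub-geometric ergodicity.

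The next step is to turn the right-hand side into a linear bound. Since $\varphi$ is concave on $[1,\infty)$, $\varphi'$ is non-increasing, so the assumption $\liminf_{t\to\infty}\varphi'(t)=c>0$ forces $\varphi'(t)\ge c$ for all $t\ge1$. Integrating, $\varphi(t)\ge\varphi(1)+c(t-1)\ge\kappa\, t$ for $t\ge1$ with $\kappa:=\min(\varphi(1),c)>0$. Because $\V\ge1$, this gives $\varphi_\Lambda(\V(x))\ge(\kappa/\Lambda)\V(x)$, and combined with the previous display,
$$\mathcal{L}\V(x)\,\le\,-\frac{\kappa}{2\Lambda}\,\V(x)\,,\qquad |x-x_0|\ge r_1\,.$$
On the compact ball $\bar B_{r_1}(x_0)$ the function $\mathcal{L}\V$ is bounded (as $\V\in C^2$ and $b$, $\sigma$ are locally bounded by \textbf{(C1)}), so by enlarging the constant appropriately, one obtains a global geometric drift inequality
$$\mathcal{L}\V(x)\,\le\,-\lambda \V(x)+\beta\,\mathbf{1}_{\bar B_{r_1}(x_0)}(x)\,,\qquad x\in\R^d\,,$$
with $\lambda:=\kappa/(2\Lambda)>0$ and some finite $\beta>0$.

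To conclude, I would invoke a standard geometric ergodicity theorem for continuous-time Markov processes, such as \cite[Theorem 6.1]{Meyn-Tweedie-AdvAP-II-1993} or its refinement \cite[Theorem 5.2]{Down-Meyn-Tweedie-1995}. The hypotheses needed there are the geometric drift inequality toward a petite set, together with $\psi$-irreducibility and aperiodicity. Irreducibility and aperiodicity are assumed, and since $\process{X}$ is $C_b$-Feller and open-set irreducible, every compact set (in particular $\bar B_{r_1}(x_0)$) is petite by \cite[Theorems 5.1 and 7.1]{Tweedie-1994}. Geometric ergodicity of $\process{X}$ follows.

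The main technical point is really just the observation in the second paragraph: concavity of $\varphi$ together with $\liminf_{t\to\infty}\varphi'(t)>0$ upgrades the sub-linear factor $\varphi_\Lambda(\V(x))$ on the right-hand side of the Lyapunov inequality into an honest linear factor $\lambda \V(x)$, after which the rest is classical. No new Lyapunov function needs to be built, and no delicate choice of auxiliary functions (as in the sub-geometric case) is required.
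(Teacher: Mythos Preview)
Your proof is correct and follows essentially the same strategy as the paper: derive a geometric drift inequality $\mathcal L\V\le -\lambda\V+\beta\Ind_{\bar B_{r_1}(x_0)}$ and conclude via \cite[Theorem 5.2]{Down-Meyn-Tweedie-1995}, using \cite[Theorems 5.1 and 7.1]{Tweedie-1994} for petiteness of the compact ball. The only difference is a small shortcut: the paper first observes that $\varphi$ is bounded above and below by linear functions, rewrites the finiteness condition \eqref{eq2} with $\varphi$ replaced by the identity, and then \emph{rebuilds} the Lyapunov function $\bar\V$ with that linearized integrand; you instead keep the original $\bar\V$ from the proof of Theorem~\ref{tm:TV} and plug the linear lower bound $\varphi(t)\ge\kappa t$ directly into the already-established inequality $\mathcal L\V\le -\tfrac12\varphi_\Lambda(\V)$. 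Both routes give the same geometric drift and yours is slightly more economical. One small point worth making explicit is that the aperiodicity hypothesis in \cite{Down-Meyn-Tweedie-1995} is formulated differently from the paper's definition; the paper bridges this via \cite[Proposition 6.1]{Meyn-Tweedie-AdvAP-II-1993} and \cite[Theorem 4.2]{Meyn-Tweedie-AdvAP-III-1993}, which you may want to cite as well.
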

\begin{proof} 
	First, observe that since $\varphi(t)$ is  differentiable and concave, $t\mapsto\varphi'(t)$ is non-increasing. Thus, since $\varphi(t)$ is also non-decreasing,
	there are constants $\Gamma\ge \gamma>0$ such that $$\gamma t-\gamma+\varphi(1)\,\le\,\varphi(t)\,\le\,\Gamma t-\Gamma+\varphi(1)\,,\qquad t\ge1\,.$$  Consequently, the condition in \eqref{eq2} is equivalent to 
	\begin{equation*}\int_{r_0}^{\infty}\left(\int_{r_0}^{u}\E^{-I_{x_0}(v)}\D v+1\right)\frac{\E^{I_{x_0}(u)}}{\gamma_{x_0}(u)}\D u\,<\,\infty\end{equation*} (recall that $\varphi(1)>0$). Denote this constant again by $\Lambda$.
	Analogously as in the proof of Theorem \ref{tm:TV}, let  $$\bar{\V}(r)\,:=\,\frac{1}{\Lambda}\int_{r_0}^{r}\E^{-I_{x_0}(u)}\int_u^{\infty}\left(\int_{r_0}^{v}\E^{-I_{x_0}(w)}\D w+1\right)\frac{\E^{I_{x_0}(v)}}{\gamma_{x_0}(v)}\D v\,\D u\,,\qquad r\geq r_0\,,$$  
	and, for arbitrary but fixed $r_1>r_0$,  let $\V:\R^{d}\to[0,\infty)$, $\V\in C^2(\R^d)$, be such that $\V(x)=\bar{\V}(|x-x_0|)+1$ for $x\in\R^d,$ $|x-x_0|\ge r_1.$ Then, for all $x\in\R^d$, $|x-x_0|\ge r_1$, it holds that \begin{equation}\label{exp}
	\mathcal{L}\V(x)\,\le\,-\frac{1}{2\Lambda}\V(x)\,,\end{equation}
	which is exactly 	the Lyapunov equation on 
	\cite[page 529]{Meyn-Tweedie-AdvAP-III-1993}
	with $c=1/2\Lambda$, $f(x)=\V(x)$, $C=\bar B_{r_1}(x_0)$  and $b=\sup_{x\in C}|\mathcal{L}V(x)|$.  The fact that $C$ is a petite set follows from	\cite[Theorems 5.1 and 7.1]{Tweedie-1994}, together with open-set irreducibility and $C_b$-Feller property of $\process{X}$.  
	Next, from 
	\cite[Proposition 6.1]{Meyn-Tweedie-AdvAP-II-1993}, \cite[Theorem 4.2]{Meyn-Tweedie-AdvAP-III-1993} and 
	aperiodicity it follows now that the are a petite set $\mathcal{C}\in\mathcal{B}(\R^d)$, 	 $T>0$ and a non-trivial measure $\nu_\mathcal{C}$ on $\mathcal{B}(\R^d)$, such that $\nu_\mathcal{C}(\mathcal{C})>0$ and $$p(t,x,B)\,\ge\,\nu_\mathcal{C}(B)\,,\qquad x\in \mathcal{C}\,,\ t\ge T\,,\ B\in\mathcal{B}(\R^d)\,.$$ In particular, 
	$$p(t,x,\mathcal{C})\,>\,0\,,\qquad x\in \mathcal{C}\,,\ t\ge T\,,$$	
	which is exactly the definition of aperiodicity used on \cite[page 1675]{Down-Meyn-Tweedie-1995}. Finally, observe that \eqref{exp} is also  the Lyapunov equation used on   \cite[page 1679]{Down-Meyn-Tweedie-1995} with $c=1/2\Lambda$,  $C=\bar B_{r_1}(x_0)$  and $b=\sup_{x\in C}|\mathcal{L}V(x)|$. The assertion now follows from \cite[Theorem 5.2]{Down-Meyn-Tweedie-1995}.
\end{proof}

Observe that in the proof of Theorem \ref{tm:TV} we did not use the fact that $\process{X}$ is a unique strong solution to \eqref{eq1}. All that we needed is that the martingale problem for $(b,c)$ is well posed, which is equivalent to that \eqref{eq1} admits a unique (in distribution) weak solution (see \cite[Theorem V.20.1]{Rogers-Williams-Book-II-2000}). According to \cite[Theorem 7.3.8]{Durrett-Book-1996} and \cite[Theorem V.24.1]{Rogers-Williams-Book-II-2000} the conclusion of Theorem \ref{tm:TV} remains true if, in addition to 
$\textbf{(C1)-(C3)}$,  $c(x)$ is Lipschitz continuous and there are $\Gamma>0$ and $\gamma\ge1$ such that \begin{equation}\label{eq:1}
\gamma^{-1}|y|^2\,\le\,\langle y,c(x)y\rangle\,\le\, \gamma|y|^2\quad\text{and}\quad |b(x)|^{2}+\lVert c(x)\rVert^{2}_{{\rm HS}}\,\le\, \Gamma(1+|x|^2)\,,\qquad x,\,y\in\R^{d}\,.\end{equation}
Moreover, under the above assumptions, \cite[Theorem V.24.1]{Rogers-Williams-Book-II-2000} states that $\process{X}$ is a Feller and strong Feller process. Recall, strong Feller property means that the corresponding semigroup maps $B_b(\R^d)$ to $C_b(\R^{d})$.
Also,  \eqref{eq:1}, together with $\textbf{(C1)-(C3)}$ and Lipschitz continuity of $c(x)$, implies open-set irreducibility and aperiodicity of $\process{X}$
(see \cite[Remark 4.3]{Stramer-Tweedie-1997}).

In the following theorem we discuss open-set irreducibility and aperiodicity of $\process{X}$ in the situation when  $c(x)$ is not necessarily Lipschitz continuous and uniformly elliptic.

\begin{theorem}\label{tm:IRAP} Assume $\textbf{(C1)-(C3)}$. Further, assume that there are $x_0\in\R^d$ and $r_0>0$, such that 
	\begin{itemize}
		\item [(i)]  there are  $\delta,\Gamma,\gamma>0$,  such that for all   $x,y\in B_{r_0}(x_0)$ we have that
		$$|b(x)-b(y)|+\lVert c(x)-c(y)\rVert_{{\rm HS}}\,\le\, \Gamma|x-y|^{\delta}\quad\textrm{and}\quad \langle y,c(x)y\rangle\,\ge\, \gamma|y|^{2}\,;$$
		\item[(ii)] $\mathbb{P}^x(\tau_{B_{r_0}(x_0)}<\infty)>0$ for all $x\in\R^d$, where $\tau_{B}:=\inf\{t\ge0: X_t\in B\}$ is the first hitting time of a set $B\subseteq\R^d.$ 
	\end{itemize}
	Then, $\process{X}$ is open-set irreducible and aperiodic.
\end{theorem}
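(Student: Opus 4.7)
The strategy is to produce a strictly positive transition density for the diffusion while it stays inside $B_{r_0}(x_0)$, and then to propagate this positivity to arbitrary starting points using assumption (ii), path continuity and the strong Markov property.

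First I would localise. Fix $r_1\in(0,r_0)$ and extend $(b,c)$ to globally bounded, $\delta$-H\"older continuous coefficients $(\tilde b,\tilde c)$ on $\R^d$ that coincide with $(b,c)$ on $\overline{B_{r_1}(x_0)}$ and satisfy $\langle y,\tilde c(x)y\rangle\ge\gamma|y|^2$ uniformly. Stroock--Varadhan well-posedness then yields a diffusion $\process{\tilde X}$ with jointly continuous, strictly positive transition density $\tilde p(t,x,y)$ satisfying Aronson-type two-sided Gaussian bounds. Local uniqueness of the martingale problem on $B_{r_1}(x_0)$ guarantees that the laws of $\process{X}$ and $\process{\tilde X}$ agree on $\mathcal{F}_\tau$, where $\tau:=\inf\{t\ge0:X_t\notin B_{r_1}(x_0)\}$. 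Consequently, for $x\in B_{r_1}(x_0)$ and Borel $A\subseteq B_{r_1}(x_0)$,
$$p(t,x,A)\,\ge\,\Prob^x(X_t\in A,\,\tau>t)\,=\,\int_A q(t,x,y)\,\D y\,,$$
where $q(t,x,y)$ is the Dirichlet heat kernel of $\tilde{\mathcal{L}}$ on $B_{r_1}(x_0)$; the parabolic strong maximum principle then forces $q(t,x,y)>0$ for all $t>0$ and all $x,y\in B_{r_1}(x_0)$.

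For open-set irreducibility, I would take $\phi$ to be Lebesgue measure restricted to $B_{r_1}(x_0)$, whose support $\overline{B_{r_1}(x_0)}$ has non-empty interior. Fix a Borel set $A$ with $\phi(A)>0$. For $x\in B_{r_1}(x_0)$ the local density immediately yields $p(t,x,A)>0$ for every $t>0$. For $x\notin B_{r_1}(x_0)$, assumption (ii) together with path continuity (which forces continuous entry into the open ball) and Fubini give $\Prob^x(X_s\in B_{r_0}(x_0))>0$ for some $s>0$; applying the Markov property at time $s$ and then the local density (with $r_1$ replaced, if necessary, by a slightly smaller radius to cover interior starting points) produces $\int_0^\infty p(t,x,A)\,\D t>0$. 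Aperiodicity follows by the same mechanism: for any fixed $t_0>0$ and $x\in B_{r_1}(x_0)$ the skeleton satisfies $p(nt_0,x,A)>0$ for every $n\ge1$; for $x\notin B_{r_1}(x_0)$ one picks $s>0$ as above, chooses $n$ large enough that $nt_0>s$, and combines Chapman--Kolmogorov with positivity of $q(nt_0-s,\cdot,\cdot)$ to conclude $\sum_{n\ge0}p(nt_0,x,A)>0$.

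The principal technical obstacle lies in the local density step: carefully constructing the H\"older extension $(\tilde b,\tilde c)$, invoking local uniqueness of the martingale problem to transfer the Aronson lower bound from $\tilde X$ to $X$ up to the exit time $\tau$, and deducing strict positivity of the Dirichlet kernel $q(t,x,y)$ on $(0,\infty)\times B_{r_1}(x_0)\times B_{r_1}(x_0)$. The subsequent propagation from inside $B_{r_0}(x_0)$ to the rest of $\R^d$ is essentially bookkeeping with path continuity, the strong Markov property, and Chapman--Kolmogorov.
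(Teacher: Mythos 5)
Your proposal follows essentially the same architecture as the paper's proof: obtain a strictly positive, jointly continuous density $q(t,x,y)$ for the diffusion killed upon exiting a ball around $x_0$, use Lebesgue measure on that ball as the irreducibility measure, and propagate positivity to arbitrary starting points via assumption (ii), path continuity and Chapman--Kolmogorov. The paper gets the local density directly from \cite[Theorems 7.3.6 and 7.3.7]{Durrett-Book-1996}, whereas you rebuild it from scratch (H\"older extension of the coefficients, Stroock--Varadhan well-posedness, Aronson bounds, local uniqueness up to $\tau$, parabolic strong maximum principle); this is a correct, if longer, reconstruction of exactly what the cited theorems deliver, and it has the side benefit of making the localization to a smaller radius $r_1<r_0$ explicit. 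For aperiodicity your argument is somewhat more direct than the paper's: you fix $s>0$ with $p(s,x,B_{r_1}(x_0))>0$ (which does exist by the Fubini/occupation-time argument, not merely a.e.), pick $n$ with $nt_0>s$, and apply Chapman--Kolmogorov together with strict positivity of $q(nt_0-s,\cdot,\cdot)$; the paper instead argues by contradiction, showing that failure of $\sum_{n}p(n-t,x,B_{r_0}(x_0))>0$ for every $t\in(0,1)$ would force $\Prob^x\bigl(\bigcup_{q\in\mathbb{Q}_+\setminus\ZZ_+}\{X_q\in B_{r_0}(x_0)\}\bigr)=0$, contradicting path continuity and assumption (ii). Both routes are valid; yours trades the rationals/contradiction step for a cleaner Chapman--Kolmogorov splitting at a single time $s$, at the cost of carrying along the additional machinery in the local-density step.
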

\begin{proof}
	
	Due to \cite[Theorems 7.3.6 and 7.3.7]{Durrett-Book-1996} there is a strictly positive function  $q(t,x,y)$ on $(0,\infty)\times \bar{B}_{r_0}(x_0)\times\bar{B}_{r_0}(x_0)$, jointly continuous in $t$, $x$ and $y$, and twice continuously differentiable in $x$ on $B_{r_0}(x_0)$, satisfying \begin{equation*} \mathbb{E}^x(f(X_t),\tau_{\bar{B}^c_{r_0}(x_0)}>t)\,=\,\int_{B_{r_0}(x_0)}q(t,x,y)f(y)\,\D y\,,\qquad t>0,\, x\in B_{r_0}(x_0),\, f\in C_b(\R^d)\,,\end{equation*} where $\tau_{\bar{B}^c_{r_0}(x_0)}:=\inf\{t\ge0:X_t\in \bar{B}^c_{r_0}(x_0)\}.$ Clearly, by employing dominated convergence theorem, the above relation holds also for $\mathbbm{1}_O$, for any open set $O\subseteq B_{r_0}(x_0)$. Denote by $\mathcal{D}$ the class of all  $B\in\mathcal{B}(B_{r_0}(x_0))$ (the Borel $\sigma$-algebra on $B_{r_0}(x_0)$) such that \begin{equation*} \mathbb{P}^x(X_t\in B,\ \tau_{\bar{B}^c_{r_0}(x_0)}>t)\,=\,\int_{B}q(t,x,y)\, \D y\,,\qquad t>0,\, x\in B_{r_0}(x_0)\,.\end{equation*} Clearly, $\mathcal{D}$ contains the $\pi$-system of open rectangles in $B_{r_0}(x_0)$, and forms a $\lambda$-system. Hence, by employing  Dynkin's $\pi$-$\lambda$ theorem we conclude that $\mathcal{D}=\mathcal{B}(B_{r_0}(x_0)).$ Consequently, for any $t>0$, $x\in B_{r_0}(x_0)$ and $B\in\mathcal{B}(\R^d)$ we have that
	\begin{equation*} p(t,x,B)\,\ge\, \int_{B\cap B_{r_0}(x_0)}q(t,x,y)\, \D y\,.\end{equation*} 
	Set now $\phi(\cdot):=\lambda(\cdot\cap B_{r_0}(x_0))$, where $\lambda$ stands for the Lebesgue measure on $\R^d$. Then, $\phi$ is a $\sigma$-finite measure whose support has a non-empty interior.

	Let us now show that $\process{X}$ is $\phi$-irreducible.  Let  $x\in B^c_{r_0}(x_0)$ (for $x\in B_{r_0}(x_0)$ the assertion is obvious) and $B\in\mathcal{B}(\R^d)$, $\phi(B)>0$, be arbitrary. For all $s>0$ we have
	\begin{align*}
	\int_0^\infty p(t,x,B)\, \D t &\, \ge\, \int_s^\infty  p(t,x,B)\,\D t \\&\,=\, \int_s^\infty \int_{\R^d} p(t-s,x,\D y)p(s,y,B) \,\D t \\
	&\,\geq\, \int_s^\infty \int_{B_{r_0}(x_0)} p(t-s,x,\D y)p(s,y,B) \,\D t \\
	&\,=\, \int_{B_{r_0}(x_0)}p(s,y,B) \int_s^\infty p(t-s,x,\D y)\, \D t \,.
	\end{align*}
	The assertion now follows from the fact that $p(s,y,B)>0$ for $y\in B_{r_0}(x_0)$, and 
	\begin{equation*} \int_s^\infty p(t-s,x,B_{r_0}(x_0))\,\D t
	\,=\,\int_0^\infty p(t,x,B_{r_0}(x_0)) \,\D t
	\,=\,\mathbb{E}^x\left[\int_0^\infty \mathbbm{1}_{\{X_t\in B_{r_0}(x_0)\}}\,\D t\right]\,>\,0\,,
	\end{equation*}
	since $\process{X}$ has continuous sample paths, $B_{r_0}(x_0)$ is an open set and, by assumption, $\mathbb{P}^x(\tau_{B_{r_0}(x_0)}<\infty)>0$ for every $x\in\R^d$.
	
	Finally, let 
	us prove that $\process{X}$ is aperiodic. We show that 
	$$ \sum_{n=1}^\infty p(n,x,B)\,>\,0\,,\qquad x\in\R^d\,,$$ whenever $\phi(B)>0,$  $B\in \mathcal{B}(\R^d)$.
	Again, for $x\in B_{r_0}(x_0)$ the relation obviously holds. For $x\in B^c_{r_0}(x_0)$ and $B\in\mathcal{B}(\R^d)$, $\phi(B)>0$, we have that
	\begin{equation*}
	\sum_{n=1}^\infty p(n,x,B)\,\ge\,\int_{B_{r_0}(x_0)}\sum_{n=1}^\infty p(n-t,x,\D y)\,p(t,y,B)\,,\qquad t\in(0,1)\,.
	\end{equation*} Since $p(t,y,B)>0$ for $y\in B_{r_0}(x_0)$, it suffices to show that \begin{equation*}\sum_{n=1}^\infty p(n-t,x,B_{r_0}(x_0))\,\ge\,\mathbb{P}^x\left(\bigcup_{n=1}^\infty\{X_{n-t}\in B_{r_0}(x_0)\}\right)\,>\,0\end{equation*} for some $t\in(0,1)$. Assume this is not the case, that is, \begin{equation*} \mathbb{P}^x\left(\bigcup_{n=1}^\infty\{X_{n-t}\in B_{r_0}(x_0)\}\right)\,=\,0\,,\qquad t\in(0,1)\,.\end{equation*} This, in particular, implies that \begin{equation*}\mathbb{P}^x\left(\bigcup_{q\in\mathbb{Q}_+\setminus\ZZ_+}\{X_{q}\in B_{r_0}(x_0)\}\right)\,=\,0\,,\end{equation*} which is impossible since $\process{X}$ has continuous sample paths, $B_{r_0}(x_0)$ is an open set and $\mathbb{P}^x(\tau_{B_{r_0}(x_0)}<\infty)>0$ for every $x\in\R^d$. Thus, \begin{equation*} 
	\sum_{n=1}^\infty p(n,x,B)\,>\,0\,,\qquad x\in\R^d\,,\end{equation*} whenever $\phi(B)>0$, which concludes the proof.
\end{proof}

In the following proposition we give a sufficient condition for the second assumption in Theorem \ref{tm:IRAP} to hold.

\begin{proposition}\label{p:IRAP} Assume $\textbf{(C1)-(C3)}$. Then for any $x_0\in\R^d$ and $r_0>0$, provided that $c(x)$ is positive definite for all $x\in\R^d$, $|x-x_0|\geq r_0$, it holds that 
	$$\mathbb{P}^x(\tau_{B_{r_0}(x_0)}<\infty)\,>\,0\,, \qquad  x\in\R^d\,.$$
\end{proposition}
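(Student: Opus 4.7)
The plan is to construct a radial super-harmonic Lyapunov function in the exterior of $B_{r_0}(x_0)$ and apply optional stopping, mirroring the Lyapunov construction in the proof of Theorem \ref{tm:TV}. First I would define $\phi:[r_0,\infty)\to[0,\infty)$ by $\phi(r):=\int_{r_0}^{r}\E^{-I_{x_0}(u)}\,\D u$. Under the standing assumptions (\textbf{(C1)} plus pointwise positive definiteness of $c$ on $\{|y-x_0|\ge r_0\}$), $I_{x_0}$ is finite on bounded intervals, so $\phi$ is well-defined, strictly increasing, with $\phi(r_0)=0$, $\phi'(r)=\E^{-I_{x_0}(r)}$, and $\phi''(r)=-(\iota_{x_0}(r)/r)\phi'(r)$ almost everywhere. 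Setting $\V(x):=\phi(|x-x_0|)$, the same radial It\^o calculation as in the proof of Theorem \ref{tm:TV} delivers $\mathcal{L}\V(x)\le 0$ for $|x-x_0|>r_0$, the inequality coming from the pointwise definition of $\iota_{x_0}$ as a supremum on the sphere of radius $|x-x_0|$.

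Next I would fix $x\in\R^d$ with $|x-x_0|>r_0$ (the case $|x-x_0|\le r_0$ is handled identically using $\phi(r_0)=0$). For $R>|x-x_0|$, set $\tau:=\tau_{B_{r_0}(x_0)}$ and $\tau_R:=\inf\{t\ge 0:|X_t-x_0|\ge R\}$. On $[0,\tau\wedge\tau_R]$ the process stays in the compact annulus $\{r_0\le|y-x_0|\le R\}$, so It\^o applied to $\V$ (the a.e. regularity of $\phi''$ is handled by a standard mollification of $\phi$) makes $\V(X_{t\wedge\tau\wedge\tau_R})$ a bounded non-negative supermartingale valued in $[0,\phi(R)]$. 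Taking expectations, using $\V=\phi(R)$ on $\partial B_R(x_0)$ and $\V=0$ on $\partial B_{r_0}(x_0)$, and sending $t\to\infty$ yields
$$\Prob^x(\tau_R<\tau)\,\le\,\phi(|x-x_0|)/\phi(R).$$

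To turn this into a lower bound on $\Prob^x(\tau<\infty)$, I would show separately that the process cannot stay in the closed annulus indefinitely, i.e., $\Prob^x(\tau\wedge\tau_R=\infty)=0$. Since $\{\tau=\tau_R<\infty\}$ is empty ($X_t$ cannot sit on two disjoint spheres at once), the three events $\{\tau<\tau_R\}$, $\{\tau_R<\tau\}$, $\{\tau=\tau_R=\infty\}$ partition $\Omega$; together with the no-trap claim this yields
$$\Prob^x(\tau<\infty)\,\ge\,\Prob^x(\tau<\tau_R)\,=\,1-\Prob^x(\tau_R<\tau)\,\ge\,1-\phi(|x-x_0|)/\phi(R).$$
If $\phi(\infty)=\infty$, sending $R\to\infty$ gives $\Prob^x(\tau<\infty)=1$; otherwise, strict monotonicity of $\phi$ allows us to choose $R$ with $\phi(R)>\phi(|x-x_0|)$, yielding a strictly positive lower bound.

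The main obstacle is the no-trap claim $\Prob^x(\tau\wedge\tau_R=\infty)=0$, which requires uniform positive definiteness of $c$ on the compact annulus, whereas the hypothesis supplies it only pointwise. Under the regularity implicit in the surrounding context of the paper (e.g.~continuity of $\sigma$), pointwise non-degeneracy on the compact annulus upgrades to uniform ellipticity, and one can build an auxiliary bounded function $W$ on the annulus with $\mathcal{L}W\ge 1$ --- for example $W(x)=\alpha\,\E^{\beta x_1}$ with $\beta$ large relative to the local bound on $b$ --- so that $W(X_{t\wedge\tau\wedge\tau_R})-(t\wedge\tau\wedge\tau_R)$ is a bounded submartingale forcing $\mathbb{E}^x[\tau\wedge\tau_R]<\infty$. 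Without continuity of $c$, a Feller-type test for non-explosion into the annulus or a controllability estimate would be needed to close this gap.
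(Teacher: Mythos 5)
Your approach is essentially the one used in the paper: construct a radial Lyapunov function $\V(x)=\bar{\V}(|x-x_0|)$ with $\bar{\V}$ built from $\E^{-I_{x_0}}$, verify $\mathcal{L}\V\le 0$ on $\{|x-x_0|\ge r_0\}$, localize by the exit times $\tau_n$ from the balls $B_n(x_0)$, apply optional stopping and Fatou to get $\bar{\V}(n)\,\Prob^x(\tau_n<\tau_{B_{r_0}(x_0)})\le\bar{\V}(|x-x_0|)$, and let $n\to\infty$. One cosmetic difference: the paper shifts the lower limit of integration to $r_0-\varepsilon$ for a fixed $0<\varepsilon<r_0$, so that $\bar{\V}(r_0)>0$ and $\bar{\V}'(r_0)>0$; this makes it immediate to extend $\V$ to a nonnegative $C^2$ function on all of $\R^d$, whereas your $\phi$ vanishes and flattens out at $r_0$ and therefore requires the mollification you allude to. The paper also avoids your case split on whether $\phi(\infty)$ is finite or infinite: since $\bar{\V}$ is strictly increasing, $\bar{\V}(|x-x_0|)/\bar{\V}(\infty)<1$ in either case.

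On the ``no-trap'' point: you are correct that the passage from $\Prob^x(\tau_n<\tau_{B_{r_0}(x_0)})\le\bar{\V}(|x-x_0|)/\bar{\V}(n)$ to $\Prob^x(\tau_{B_{r_0}(x_0)}=\infty)\le\bar{\V}(|x-x_0|)/\bar{\V}(\infty)$ implicitly uses that the process almost surely does not stay in a fixed closed annulus $\{r_0\le|y-x_0|\le n\}$ for all time. Indeed $\{\tau_{B_{r_0}(x_0)}>\tau_n\}$ forces $\tau_n<\infty$, so letting $n\to\infty$ in $\Prob^x(\tau_{B_{r_0}(x_0)}>\tau_n)$ only controls paths that escape to infinity without ever entering $B_{r_0}(x_0)$. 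The paper does not comment on this step, but it does not leave a gap either: conditions $\textbf{(C1)}$ and $\textbf{(C2)}$ together force $\sigma$ (hence $c$) to be continuous, because
$\lVert\sigma(x)-\sigma(y)\rVert_{\rm HS}^2\le\Gamma_r|x-y|^2+2|x-y|\,|b(x)-b(y)|$
tends to $0$ as $y\to x$ on $B_r(0)$. So pointwise positive definiteness of $c$ on $\{|x-x_0|\ge r_0\}$ automatically upgrades to uniform ellipticity on each compact annulus, and your auxiliary $W$ (or equivalently the unbounded accumulation of quadratic variation of the radial process) settles the no-trap claim. Thus the caveat in your final sentence, about what happens ``without continuity of $c$'', is moot under the standing hypotheses $\textbf{(C1)}$--$\textbf{(C3)}$; modulo that, your argument matches the paper's.
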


\begin{proof} 
	Let $0<\varepsilon<r_0$, and let  $$\bar{\V}(r):=\int_{r_0-\varepsilon}^{r}\E^{-I_{x_0}(u)}\D u\,,\qquad r\geq r_0-\varepsilon\,.$$ 
	Then, for $r> r_0-\varepsilon$ we have
	$$
	\bar{\V}'(r)\,=\,\E^{-I_{x_0}(r)}\,>\,0\qquad \text{and}\qquad 
	\bar{\V}''(r)\,=\,-\frac{\bar{\V}'(r)}{r}\iota_{x_0}(r)\,.
	$$		
	Further, let $\V:\R^{d}\to[0,\infty)$, $\V\in C^2(\R^d)$, be such that $\V(x)=\bar{\V}(|x-x_0|)$ for $x\in\R^d$, $|x-x_0|\ge r_0.$ Now, for $x\in\R^{d}$, $|x-x_0|\ge r_0$, we have
	\begin{align*}
	2\mathcal{L}\V(x)&\,=\,C_{x_0}(x)\bar{\V}''(|x-x_0|)+\frac{\bar{\V}'(|x-x_0|)}{|x-x_0|}(2A(x)-C_{x_0}(x)+2B_{x_0}(x))\\
	&\,=\, \frac{\bar{\V}'(|x-x_0|)}{|x-x_0|}\left(2A(x)-C_{x_0}(x)+2B_{x_0}(x)-C_{x_0}(x)\iota(|x-x_0|)\right)\\
	&\,\le\,0\,.
	\end{align*}
	Further, as we have already discussed,  for every $x\in\R^d$  the process $$\V(X_t)-\V(X_0)-\int_0^t\mathcal{L}\V(X_s)\,\D s\,,\qquad t\geq0\,,$$ is a local $\Prob^x$-martingale.	
	For $n\in\N$,  define 
	$\tau_n:=\tau_{B^c_n(x_0)}.$ Clearly, $\tau_n$, $n\in\N$, are stopping times such that (due to non-explosivity of $\process{X}$) $\tau_n\to\infty$ $\Prob^x$-a.s.  as $n\to\infty$ for all $x\in\R^d$.
	Hence, the processes $$\V(X_{t\wedge\tau_n})-\V(X_0)-\int_0^{t\wedge\tau_n}\mathcal{L}\V(X_s)\,\D s\,,\qquad t\geq0,\, n\in\N\,,$$ are $\Prob^x$-martingales. Now, for $x\in\R^d$, $|x-x_0|\ge r_0$, we have
	\begin{align*}
	2\mathbb{E}^x[\bar{\V}(|X_{t\wedge\tau_n\wedge\tau_{B_{r_0}(x_0)}}-x_0|)]-2\bar{\V}(|x-x_0|)&\,=\,2\mathbb{E}^x[\V(X_{t\wedge\tau_n\wedge\tau_{B_{r_0}(x_0)}})]-2\mathbb{E}^x[\V(X_0)]\\
	&\,=\,\mathbb{E}^x\int_0^{t\wedge\tau_n\wedge\tau_{B_{r_0}(x_0)}}2\mathcal{L}\V(X_s)\,\D s\\
	&\,\leq\, 0\,,
	\end{align*}
	that is,
	$$\mathbb{E}^x[\bar{\V}(|X_{t\wedge\tau_n\wedge\tau_{B_{r_0}(x_0)}}-x_0|)]\,\le\, \bar{\V}(|x-x_0|)\,.$$
	Thus,
	$$\mathbb{E}^x[\bar{\V}(|X_{t\wedge\tau_n}-x_0|)\Ind_{\{\tau_{B_{r_0}(x_0)}>\tau_n\}}]\,\le\, \bar{\V}(|x-x_0|)\,,\qquad x\in\R^d,\,\ |x-x_0|\ge r_0\,.$$ By letting $t\to\infty$ Fatou's lemma implies 
	$$\bar{\V}(n)\mathbb{P}^x(\tau_{B_{r_0}(x_0)}>\tau_n)\,\le\, \bar{\V}(|x-x_0|)\,,\qquad x\in\R^d,\, |x-x_0|\ge r_0\,.$$ Consequently, by letting $n\to\infty$, we conclude 
	$$ \mathbb{P}^x(\tau_{B_{r_0}(x_0)}=\infty)\,\le\, \frac{\bar{\V}(|x-x_0|)}{\bar \V(\infty)}\,<\,1\,,\qquad x\in\R^d,\, |x-x_0|\ge r_0\,,$$ that is, $\mathbb{P}^x(\tau_{B_{r_0}(x_0)}<\infty)>0$ for all $x\in\R^d.$
\end{proof}

As we have already commented, in   \cite[Theorem 5.4]{Douc-Fort-Guilin-2009}, \cite[page 1581]{Fort-Roberts-2005},  \cite[Theorem 1.30]{Kulik-Book-2015},
\cite[Theorem 3.3.6]{Kulik-Book-2018}, \cite[Theorem 3.3 (iv)]{Sandric-ESAIM-2016}, \cite[Theorem 6]{Veretennikov-1997} and \cite[Theorem 6]{Veretennikov-2000} it has been shown that a diffusion process $\process{X}$ (satisfying the assumptions from Corollary \ref{c:TV})   is sub-geometrically
ergodic with rate $t^{\alpha/(1-\alpha)}$, $0<\alpha<1$, if there are $\gamma>0$, $\Gamma>0$ and $r_0\ge0$, such that \eqref{eq:subgeo} holds true.
A simple example  which satisfies the relation in \eqref{eq2} but not the one in  \eqref{eq:subgeo} is the following. 
\begin{example}\label{e:TV}{\rm Let $\sigma(x)\equiv1$,  and let $b(x)$ be locally Lipschitz continuous and such that $b(x)=-{\rm sgn}(x)(\cos x+1)$ for all $|x|$ large enough, where $${\rm sgn}(x)\,:=\,\left\{
		\begin{array}{ll}
		1, & x\geq0\,, \\
		-1, & x<0\,.
		\end{array}
		\right.$$  Clearly, $b(x)$ and $\sigma(x)$ satisfy $\textbf{(C1)-(C3)}$ and  define, through \eqref{eq1}, an open-set irreducible and aperiodic diffusion process $\process{X}$.  The condition in \eqref{eq2} now reduces to showing 
		that there is $r_0\ge 0$  such that
		$$\int_{r_0}^\infty\left(\int_{r_0}^u\E^{2\sin v +2v}+1\right)^\alpha\E^{-2\sin u -2u}\D u\,<\,\infty\,,$$
		which  can be obviously obtained  for any $0<\alpha<1.$ On the other hand,  the condition in \eqref{eq:subgeo} is equivalent to showing that there are $\gamma>0$, $\Gamma>0$ and $r_0\ge 0$, such that $$
		\frac{\gamma-1}{2}-x\,{\rm sgn}(x)(\cos x+1)\,\leq\,-\Gamma|x|^{\gamma\alpha-\gamma+2}\,,\qquad |x|\geq r_0\,.
		$$ However, observe that in the points of the form $x=(2k+1)\pi$, $k\in\ZZ$, the second term on the left-hand side in the above inequality vanishes. Thus, we conclude that it is necessary that $0<\gamma<1$ and $\gamma\alpha-\gamma+2<0$, which is impossible. Note also that if we take $b(x)$ to be locally Lipschitz continuous and such that $b(x)=-{\rm sgn}(x)(\cos x+\varrho)$ for all $|x|$ large enough, where $\varrho>0$, then we again easily conclude that \eqref{eq2} holds for any $0<\alpha<1$. On the other hand, by the same reasoning as above, \eqref{eq:subgeo} can never hold. Observe that for $0<\varrho<1$ the drift function generates a region  in which the process is ``pushed towards infinity" (set of points for which ${\rm sgn}(x)b(x)>0$). The condition in \eqref{eq2} says that this region is small compared to the region  in which the process is ``pushed towards the center of the state space" (set of points for which ${\rm sgn}(x)b(x)<0$) and which is responsible for the ergodic behavior.}
\end{example}

\begin{proposition}\label{p1} Assume $\textbf{(C1)-(C3)}$. Further, assume that
	$\gamma<2/(1-\alpha)$ and there are $r_0\ge0$ and $\Delta\ge1$, such that $\Delta^{-1}\le C_0(x)\le \Delta$  for all $|x|\geq r_0$. Then, \eqref{eq2} (with $x_0=0$) is a consequence of \eqref{eq:subgeo}.
\end{proposition}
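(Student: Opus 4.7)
The plan is to deduce \eqref{eq2} from \eqref{eq:subgeo} via a sharp pointwise upper bound on $\iota_0(r)$ followed by a comparison principle for the combined quantity $F(u)^\alpha e^{I_0(u)}$, where $F(u):=1+\int_{r_0}^u e^{-I_0(v)}\D v$. Rewriting \eqref{eq:subgeo} as $2A(x)-C_0(x)+2B_0(x)\le(1-\gamma)C_0(x)-2\Gamma|x|^\beta$ with $\beta:=\gamma\alpha-\gamma+2$ (positive thanks to $\gamma<2/(1-\alpha)$), dividing by $C_0(x)>0$, and using $C_0(x)\le\Delta$ to bound $-2\Gamma|x|^\beta/C_0(x)\le-2\Gamma|x|^\beta/\Delta$, one obtains
\[
\iota_0(r)\,\le\,\iota^*(r)\,:=\,(1-\gamma)-\frac{2\Gamma}{\Delta}\,r^\beta\,,\qquad r\ge r_0\,.
\]
Defining $I^*$, $F^*$, and $h^*:=F^{*\alpha}e^{I^*}$ from $\iota^*$ in exact analogy with $I_0$, $F$, and $h:=F^\alpha e^{I_0}$, an elementary integration gives $I^*(r)=(1-\gamma)\ln(r/r_0)-\kappa(r^\beta-r_0^\beta)$ with $\kappa:=2\Gamma/(\Delta\beta)>0$.

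The heart of the argument is a comparison lemma: $h(u)\le h^*(u)$ for $u\ge r_0$. Differentiating and using $F'=e^{-I_0}$ yields $h'(u)=\alpha F(u)^{\alpha-1}+(\iota_0(u)/u)h(u)$, and analogously for $h^*$. Subtracting, the difference $q:=h-h^*$ satisfies
\[
q'(u)-\frac{\iota_0(u)}{u}\,q(u)\,=\,\alpha\bigl(F(u)^{\alpha-1}-F^*(u)^{\alpha-1}\bigr)+\frac{\iota_0(u)-\iota^*(u)}{u}\,h^*(u)\,,
\]
and both terms on the right are $\le0$. Indeed $\iota_0\le\iota^*$ gives $I_0\le I^*$, hence $e^{-I_0}\ge e^{-I^*}$ and therefore $F\ge F^*$; since $\alpha-1<0$, this forces $F^{\alpha-1}\le F^{*\alpha-1}$. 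The integrating factor $e^{-I_0(u)}>0$ then yields $(qe^{-I_0})'\le0$, and since $q(r_0)=0$ we conclude $q\le0$ on $[r_0,\infty)$.

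It remains to verify $\int_{r_0}^\infty h^*(u)\,\D u<\infty$. A single integration by parts (or Watson's lemma) yields $\int_{r_0}^u v^{\gamma-1}e^{\kappa v^\beta}\D v\sim(\kappa\beta)^{-1}u^{\gamma-\beta}e^{\kappa u^\beta}$ as $u\to\infty$, so $F^*(u)=O(u^{\gamma-\beta}e^{\kappa u^\beta})$; combined with the closed form $e^{I^*(u)}=(u/r_0)^{1-\gamma}e^{-\kappa(u^\beta-r_0^\beta)}$, this gives
\[
h^*(u)\,=\,O\bigl(u^{\alpha(\gamma-\beta)+1-\gamma}\,e^{-(1-\alpha)\kappa u^\beta}\bigr)\,.
\]
Since $(1-\alpha)\kappa>0$ and $\beta>0$, the integrand decays super-exponentially at infinity and is bounded near $r_0$, so the integral is finite. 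The lower ellipticity bound $\gamma_0(u)\ge\Delta^{-1}$ gives $1/\gamma_0(u)\le\Delta$, so the comparison lemma delivers $\Lambda\le\Delta\int_{r_0}^\infty h^*(u)\,\D u<\infty$, which is \eqref{eq2}.

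The main obstacle is the comparison step itself, since we have only one-sided control on $\iota_0$ while the integrand of \eqref{eq2} is \emph{not} termwise monotone in $\iota_0$ (making $\iota_0$ more negative shrinks $e^{I_0}$ but enlarges $F$). The Gronwall/integrating-factor argument for the combined quantity $h=F^\alpha e^{I_0}$, crucially using $\alpha-1<0$, is what reconciles these opposing effects and legitimizes replacing $\iota_0$ by its upper bound $\iota^*$.
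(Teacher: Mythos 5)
Your proof is correct, but it takes a genuinely different route from the paper's. Both arguments start from the same pointwise bound $\iota_0(r)\le\iota^*(r)=(1-\gamma)-\tfrac{2\Gamma}{\Delta}r^\beta$ with $\beta=\gamma\alpha-\gamma+2>0$. From there the paper passes to the cruder estimate $\iota_0(r)\le-\Gamma_1 r^\beta$ (hence $I_0(r)\le-\Gamma_2 r^\beta$) on some $[r_2,\infty)$, and then invokes L'H\^opital's rule to obtain $F(u):=1+\int_{r_3}^u e^{-I_0(v)}\,\D v\le e^{-I_0(u)}$ for $u$ large, so that the integrand in \eqref{eq2} is dominated by $e^{(1-\alpha)I_0(u)}$. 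That step is delicate: the ratio of derivatives is $u/(-\iota_0(u))$, and the available one-sided bound $-\iota_0(u)\ge\Gamma_1 u^\beta$ forces it to vanish only when $\beta>1$, i.e.\ $\gamma<1/(1-\alpha)$; in the remaining range $1/(1-\alpha)\le\gamma<2/(1-\alpha)$ the claimed intermediate inequality $F(u)\le e^{-I_0(u)}$ need not hold (one can only deduce $F(u)=O(u^{1-\beta}e^{-I_0(u)})$, which is still enough for the final integral but is not what the paper asserts). Your argument bypasses this issue entirely: you work with the combined quantity $h=F^\alpha e^{I_0}$ and prove $h\le h^*$ by a Gronwall/integrating-factor comparison that crucially exploits $\alpha-1<0$, then evaluate $\int h^*$ via the explicit closed form of $I^*$ and the Laplace-type asymptotics for $F^*$. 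This is more systematic, works uniformly on $[r_0,\infty)$ with no auxiliary radii $r_1,\dots,r_4$, and treats the range $0<\beta\le1$ exactly like $\beta>1$. The price is a longer computation; the gain is that the non-monotonicity of the integrand in $\iota_0$ --- the very obstacle your closing paragraph identifies --- is resolved rigorously rather than by an optimistic application of L'H\^opital.
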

\begin{proof}
	We have that
	$$\iota_0(r)\,=\,\sup_{|x|=r}\frac{2\left( A(x)-\left(1-\frac{\gamma}{2} \right) C_0(x)+B_0(x)\right)+(1-\gamma)C_0(x)}{C_0(x)}\,\leq\,-\frac{2\Gamma}{\Delta}r^{\gamma\alpha-\gamma+2}+1-\gamma$$ for all $r\geq r_1,$ for some $r_1\geq r_0$ large enough. Thus,
	there are $\Gamma_1>0$ and $r_2\geq r_1$, such that
	$$\iota_0(r)\,\leq\, -\Gamma_1r^{\gamma\alpha-\gamma+2}\,,\qquad r\geq r_2\,.$$ This automatically implies that there are $\Gamma_2>0$ and $r_3\geq r_2$, such that
	$$I_0(r)\,\leq\, -\Gamma_2r^{\gamma\alpha-\gamma+2}\,,\qquad r\geq r_3\,.$$
	Now, 
	by employing L'Hospital's rule (here we use the assumption $\gamma<2/(1-\alpha)$), we have that
	$$\lim_{u\to\infty}\frac{\left(\int_{r_3}^u\E^{-I_0(v)}\D v+1\right)}{\E^{-I_0(u)}}\,=\,0\,.$$ Hence, there is  $r_4\geq r_3$ such that $$\int_{r_3}^u\E^{-I_0(v)}\D v+1\,\leq\, \E^{-I_0(u)}\qquad  u\geq r_4\,.$$
	Finally, we conclude
	$$\int_{r_4}^\infty\left(\int_{r_4}^u\E^{-I_0(v)}\D v+1\right)^\alpha\E^{I_0(u)}\D u\,\le\,\int_{r_4}^\infty\E^{(1-\alpha)I_0(u)}\D u\,<\,\infty\,,$$ which proves the assertion.
\end{proof}

In the following proposition,  which  generalizes \cite[Lemma 1.2]{Chen-2000} to the sub-geometric case, we give sufficient conditions ensuring \eqref{eq2}.

\begin{proposition}\label{lm:SUB} 
	Let  $c\geq0$, and let $\rho(t)$ be a non-negative and non-decreasing differentiable function defined on $[0,\infty)$.
	Further, let $f(r)$ and $g(r)$  be non-negative Borel measurable functions, also defined on $[0,\infty)$, satisfying 
	\begin{equation}\label{eq3}\Delta\,:=\,\sup_{r\geq r_0} \rho\left(\int_{r_0}^{r}\,g(u)\D u+c\right)^{1+\beta}\int_r^{\infty}f(u)\,\D u\,<\,\infty\end{equation} for some $r_0\geq0$ and $\beta\ge0$. Then,
	\begin{itemize}
		\item [(i)] if $\beta>0$, $$\int_r^{\infty}\rho\left(\int_{r_0}^{u}g(v)\,\D v+c\right)f(u)\,\D u\,\le\, \frac{\Delta(1+\beta)}{\beta}\rho\left(\int_{r_0}^{r}g(u)\,\D u+c\right)^{-\beta}\,,\qquad r\geq r_0\,.$$
		\item[(ii)] if $\beta=0$, and $\int_{r_0}^{\infty} g(r)\,\D r<\infty$ or  $\rho(t)$ is bounded,
		$$	\int_r^{\infty}\rho\left(\int_{r_0}^{u}g(v)\,\D v+c\right)f(u)\,\D u\,\le\,\Delta+\Delta\ln\frac{\rho\left(\int_{r_0}^{\infty}g(u)\,\D u+c\right)}{\rho\left(\int_{r_0}^{r}g(u)\,\D u+c\right)}\,,\qquad r\ge r_0\,.$$
	\end{itemize} 
\end{proposition}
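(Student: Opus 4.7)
The plan is to introduce the abbreviations $G(u):=\int_{r_0}^u g(v)\,\D v+c$ and $F(u):=\int_u^\infty f(v)\,\D v$, so that the hypothesis \eqref{eq3} reads $\rho(G(r))^{1+\beta}F(r)\le\Delta$ for every $r\ge r_0$, and the goal becomes estimating $J(r):=\int_r^\infty \rho(G(u))f(u)\,\D u$. One may assume throughout that $\rho(G(r))>0$, since otherwise the right-hand sides of the claimed inequalities are infinite and there is nothing to prove.

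First, I would establish the chain-rule identity
\begin{equation*}
\rho(G(u))-\rho(G(r))\,=\,\int_r^u \rho'(G(v))\,g(v)\,\D v\,,\qquad u\ge r\ge r_0\,,
\end{equation*}
valid because $G$ is absolutely continuous with $G'=g$ a.e., and $\rho$, being non-decreasing and differentiable on $[0,\infty)$, is absolutely continuous on each compact sub-interval, so $\rho\circ G$ is absolutely continuous. Substituting this identity inside $J(r)$ and applying Tonelli's theorem (justified by the non-negativity of all integrands) then yields
\begin{equation*}
J(r)\,=\,\rho(G(r))\,F(r)+\int_r^\infty \rho'(G(v))\,g(v)\,F(v)\,\D v\,.
\end{equation*}

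Next, I would invoke the hypothesis $F(v)\le\Delta\,\rho(G(v))^{-(1+\beta)}$ and recognize the remaining integral as a Stieltjes integral against $\D\rho(G(v))=\rho'(G(v))g(v)\,\D v$, namely
\begin{equation*}
\int_r^\infty \rho'(G(v))\,g(v)\,\rho(G(v))^{-(1+\beta)}\,\D v\,=\,\int_{\rho(G(r))}^{\rho(G(\infty))} t^{-(1+\beta)}\,\D t\,.
\end{equation*}
In case (i) this equals $\beta^{-1}\bigl(\rho(G(r))^{-\beta}-\rho(G(\infty))^{-\beta}\bigr)\le \beta^{-1}\rho(G(r))^{-\beta}$ (using $\rho\ge 0$). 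In case (ii) the primitive becomes a logarithm, giving $\ln(\rho(G(\infty))/\rho(G(r)))$; here the assumption that either $\int_{r_0}^\infty g<\infty$ or $\rho$ is bounded is exactly what guarantees $\rho(G(\infty))$ is a finite value of $\rho$, so the log expression is meaningful. Combining these with the boundary term $\rho(G(r))F(r)$, which the hypothesis bounds by $\Delta\,\rho(G(r))^{-\beta}$ in case (i) and by $\Delta$ in case (ii), produces the two stated estimates after collecting factors.

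The argument is essentially a clean integration-by-parts computation, so no step presents a serious obstacle. The only point that requires some care is the chain-rule identity in the first display, whose validity rests on the absolute continuity of $\rho\circ G$; this is secured by $\rho$ being non-decreasing and differentiable on $[0,\infty)$ (hence absolutely continuous on compact intervals) together with the absolute continuity of $G$.
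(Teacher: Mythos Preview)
Your proposal is correct and follows essentially the same route as the paper: write $F(r)=\int_r^\infty f$, integrate by parts (you do this via the chain-rule identity plus Tonelli, which is an equivalent and slightly cleaner way to avoid discussing the boundary term at infinity), bound $F$ by $\Delta\,\rho(G)^{-(1+\beta)}$, and evaluate the resulting integral of $\rho(G)^{-(1+\beta)}\,\D\rho(G)$. The only addition over the paper is your explicit justification of the absolute continuity of $\rho\circ G$, which the paper's argument uses implicitly.
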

\begin{proof}
	Set
	$F(r)=\int_r^{\infty}f(u)\,\D u,$ $r\geq r_0$. Then, by assumption, $$F(r)\,\le\, \Delta\,\rho\left(\int_{r_0}^{r}g(u)\,\D u+c\right)^{-1-\beta}\,,\qquad r\geq r_0\,.$$
	Consequently, for $r\geq r_0$, we have that
	\begin{align*}
	&\int_r^{\infty}\rho\left(\int_{r_0}^{u}g(v)\,\D v+c\right)f(u)\,\D u\\
	&\,=\,-\int_r^{\infty}\rho\left(\int_{r_0}^{u}g(v)\,\D v+c\right)\D F(u)\\
	&\,\le\, \rho\left(\int_{r_0}^{r}g(u)\,\D u+c\right)F(r) +\int_r^{\infty}\rho'\left(\int_{r_0}^{u}g(v)\,\D v+c\right)g(u)F(u)\,\D u\\
	&\,\le\, \Delta\,\rho\left(\int_{r_0}^{r}g(u)\,\D u+c\right)^{-\beta} +\Delta\int_r^{\infty}\rho'\left(\int_{r_0}^{u}g(v)\,\D v+c\right)g(u)\rho\left(\int_{r_0}^{u}g(v)\,\D v+c\right)^{-1-\beta}\D u\,.\end{align*}
	Now, under the assumption in (i) we have that
	\begin{align*}
	&\int_r^{\infty}\rho\left(\int_{r_0}^{u}g(v)\,\D v+c\right)f(u)\,\D u\\
	&\,\le\,\Delta\,\rho\left(\int_{r_0}^{r}g(u)\,\D u+c\right)^{-\beta} -\frac{\Delta}{\beta}\int_r^{\infty}\D \rho\left(\int_{r_0}^{u}g(v)\,\D v+c\right)^{-\beta}\\
	&\,\le\, \Delta\,\rho\left(\int_{r_0}^{r}g(u)\,\D u+c\right)^{-\beta}+\frac{\Delta}{\beta}\rho\left(\int_{r_0}^{r}g(u)\,\D u+c\right)^{-\beta}\\
	&\,=\,\frac{\Delta(1+\beta)}{\beta}\rho\left(\int_{r_0}^{r}g(u)\,\D u+c\right)^{-\beta},
	\end{align*} 
	where in the second step we employed integration by parts formula. On the other hand, under the assumptions in (ii), 
	\begin{align*}
	\int_r^{\infty}\rho\left(\int_{r_0}^{u}g(v)\,\D v+c\right)f(u)\,\D u&\,\le\,\Delta+\Delta\int_r^{\infty}d\ln\left(\rho\left(\int_{r_0}^{u}g(v)\,\D v+c\right)\right)\\
	&\,=\,\Delta+\Delta\ln\frac{\rho\left(\int_{r_0}^{\infty}g(u)\,\D u+c\right)}{\rho\left(\int_{r_0}^{r}g(u)\,\D u+c\right)}\,,
	\end{align*}
	which concludes the proof.
\end{proof}
As a direct consequence of the proposition we see that \eqref{eq2} holds true if $$\sup_{r\geq r_0} \varphi\left(\int_{r_0}^{r}\E^{-I_{x_0}(u)}\D u+1\right)^{1+\beta}\int_r^{\infty}\frac{\E^{I_{x_0}(u)}}{\gamma_{x_0}(u)}\D u\,<\,\infty$$ for some $\beta>0.$

\subsection{Ergodicity of Markov processes with jumps}\label{du}

In this subsection, as an application of  Theorem \ref{tm:TV}, we  discuss sub-geometric ergodicity of a class of Markov processes with jumps.
First, we consider jump-diffusion   processes generated by  operator of the form
\begin{align}\label{eq7}\mathcal{L}f(x)&\,=\,\langle b(x),\nabla f(x)\rangle+\frac{1}{2}{\rm Tr}\,c(x)\nabla^2f(x)\nonumber\\
&\ \ \ \ +\int_{\R^d}\left(f(y+x)-f(x)-\langle y, \nabla f(x)\rangle\Ind_{B_1(0)}(y)\right)\nu(x,\D y)\,,\end{align} where
$b(x)$ is an $\R^d$-valued Borel
measurable function, $c(x)$  is a symmetric non-negative definite $d\times d$ matrix-valued Borel measurable function, and $\nu(x,\D y)$ is a non-negative Borel kernel on 
$(\R^d,\mathcal{B}(\R^d))$, called the L\'evy kernel,  satisfying
$$\nu(x,\{0\})\,=\,0\,,\quad\text{and}\quad\int_{\R^d}(1\wedge|y|^{2})\nu(x,\D y)\,<\,\infty\,,\qquad x\in\R^d\,.$$  Clearly, if $\nu(x,\D y)$ is a null-measure, then $\mathcal{L}$ becomes a diffusion operator. In the sequel, we assume that 
\begin{description}
	\item [(A1)] there is a  c\`adl\`ag Markov process $(\Omega,\mathcal{F}, \process{\mathcal{F}},\process{\theta},$ $\process{X},\{\Prob^x\}_{x\in\R^d})$, denoted by  $\process{X}$ in the sequel, which we call jump-diffusion process, such that
	for every $f \in C^2(\R^d)$ the process
	$$f(X_t)-f(X_0)-\int_0^{t}\mathcal{L}f(X_s)\,\D s\,,\qquad t\geq0\,,$$ is a $\mathbb{P}^x$ local martingale for all $x\in\R^d$ under the natural filtration;
	\item [(A2)] the  process $\process{X}$    satisfies the $C_b$-Feller property;
	\item [(A3)] the process $\process{X}$ is open-set irreducible and aperiodic.
\end{description}
Here,  $C_b^2(\R^d)$ denotes the space of twice continuously differentiable functions with bounded derivatives.
Let us remark that $\textbf{(A1)}$ always  holds for the infinitesimal generator
$(\mathcal{A},\mathcal{D}_\mathcal{A})$ of $\process{X}$ (see   \cite[Theorem 2.2.13 and Proposition 4.1.7]{Ethier-Kurtz-Book-1986}).  
We refer the readers to \cite{Bottcher-Schilling-Wang-2013} for conditions, in terms of   $b(x),$ $c(x) $ and $\nu(x,\D y)$, ensuring $\textbf{(A1)}$ and $\textbf{(A2)}$. 
Open-set irreducibility and  aperiodicity of jump-diffusion  processes is a very well-studied topic in the literature. In particular, we refer the readers to  \cite{Kolokoltsov-2000} and \cite{Kolokoltsov-Book-2011} for the case of  so-called stable-like processes,
to \cite{Knopova-Schilling-2012}, \cite{Knopova-Schilling-2013}, \cite{Kwoon-Lee-1999}, \cite[Remark 3.3]{Pang-Sandric-2016}  \cite[Theorem 2.6]{Sandric-TAMS-2016} and \cite{Stroock-1975}   for the case of jump-diffusion  processes with bounded coefficients, and to \cite{Arapostathis-Pang-Sandric-2019},  \cite{Bass-Cranston-1986},  \cite{Ishikawa-2001}, \cite{Knopova-Kulik-2014},  \cite{Masuda-2007, Masuda-Erratum-2009}  and \cite{Picard-1996, Picard-Erratum-2010} for the case of a class of jump-diffusion processes obtained as a solution to certain jump-type SDEs.
According to \cite[Theorem 3.2]{Tweedie-1994}, $\process{X}$ will be open-set irreducible and aperiodic if it is strong Feller (actually it suffices to assume that $\process{X}$ is a T-model in the sense of \cite{Tweedie-1994}, which is a certain weak version of the strong Feller property) and $\Prob^x(X_t\in O)>0$ for every $t>0$, $x\in\R^d$ and non-empty open set $O\subseteq\R^d$. If $b(x)$ is continuous and bounded, $c(x)$  continuous, bounded and positive definite,  $x\mapsto\int_{B}(1\wedge|y|^2)\nu(x,\D y)$  continuous and bounded  for any $B\in\mathcal{B}(\R^d)$, and 
\begin{equation*}(x,\xi)\,\mapsto\,
i\langle \xi,b(x)\rangle + \frac{1}{2}\langle\xi,c(x)\xi\rangle +
\int_{\R^{d}}\left(1-e^{i\langle\xi,y\rangle}+i\langle\xi,y\rangle\Ind_{B_1(0)}(y)\right)\nu(x,\D y)\end{equation*} continuous,	
then 
\begin{itemize}
	\item [(i)] there is a unique non-explosive strong Markov process $\process{X}$ with infinitesimal generator $(\mathcal{A},\mathcal{D}_\mathcal{A})$ such that $C_c^\infty(\R^d)\subseteq
	\mathcal{D}_\mathcal{A}$, and $\mathcal{A}|_{C_c^\infty(\R^d)}$ takes the form in \eqref{eq7}, where 
	$C_c^\infty(\R^d)$ stands for the space of smooth functions with compact support;
	\item[(ii)]the operator $\mathcal{L}:=\mathcal{A}|_{C_c^\infty(\R^d)}$ satisfies \textbf{(A1)};
	\item[(iii)] the  semigroup of $\process{X}$ satisfies the Feller and strong Feller property\,,
\end{itemize} 
(see \cite[Theorems 2.37, 3.23, 3.24, 3.25]{Bottcher-Schilling-Wang-2013} and \cite[Remark after Theorem 4.3]{Stroock-1975}). 
Finally, we also assume
\begin{description}
	\item[(A4)] there is $\rho>0$  such that  $\nu(x,B^c_{|x|}(-x))=0$ and $\int_{B_1(0)}|y|\,\nu(x,\D y)<\infty$ for all $x\in\R^d$, $|x|\ge \rho$;
	\item[(A5)]  the functions $b(x)$, $c(x)$ and $x\mapsto\int_{B_1(0)}y\,\nu(x,\D y)$  are continuous on $B^c_{\rho}(0)$.
\end{description}
Assumption \textbf{(A4)} means that when $\process{X}$ is far away from the center of the state space, it admits  bounded jumps only, with maximal intensity equal twice the distance to the origin. Also, with each jump, it comes closer to the center of the state space. 

In the following theorem we give sufficient conditions for sub-geometric ergodicity of a class of jump-diffusion processes satisfying \textbf{(A1)-(A5)}. We use the same notation as in Theorem \ref{tm:TV}, with $$B_{x_0}(x)\,:=\,\bigl\langle x-x_0,b(x)-\int_{B_1(0)}y\,\nu(x,\D y)\bigr\rangle\,,\qquad x\in\R^d\,.$$

\begin{theorem}\label{tm:TVJD}Let $\process{X}$ be an open-set irreducible and aperiodic jump-diffusion process with coefficients $b(x)$, $c(x)$ and $\nu(x,\D y)$, satisfying \textbf{(A1)-(A5)}.
	Further, let $\varphi:[1,\infty)\longrightarrow(0,\infty)$ be a  non-decreasing, differentiable and concave  function satisfying $\lim_{t\to\infty}\varphi'(t)=0$ and the relation in \eqref{eq2}
	for some $x_0\in\R^d$ and  $ r_0\ge \rho +|x_0|$,	and assume that $c(x)$ is positive definite for all $x\in\R^d$, $|x-x_0|\ge r_0$. 	Then, $\process{X}$ admits a unique invariant  $\pi\in\mathcal{P}$ such that
	$$\lim_{t\to\infty}\varphi(\varPhi^{-1}(t))\rVert\delta_x P_t-\pi\lVert_{{\rm TV}}\,=\,0\,,\qquad x\in\R^d\,,$$ where  $\Phi(t)$ is as in Theorem \ref{tm:TV}.
\end{theorem}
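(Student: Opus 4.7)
The plan is to mimic the Foster-Lyapunov argument of Theorem~\ref{tm:TV}. The starting observation is that under (A4) the generator \eqref{eq7} can be rewritten as
$$\mathcal{L}f(x)\,=\,\langle\tilde b(x),\nabla f(x)\rangle + \tfrac{1}{2}\,{\rm Tr}\,c(x)\nabla^{2}f(x) + \int_{\R^d}\bigl(f(x+y)-f(x)\bigr)\,\nu(x,\D y),$$
with effective drift $\tilde b(x):=b(x) - \int_{B_{1}(0)}y\,\nu(x,\D y)$; the $B_{x_0}(x)$ used in the statement is exactly $\langle x-x_0,\tilde b(x)\rangle$, so that $I_{x_0}$, $\gamma_{x_0}$, $\iota_{x_0}$ and $\Lambda$ retain the same meaning as in Theorem~\ref{tm:TV} but relative to $\tilde b$. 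I would then construct $\bar{\V}(r)$ and the Lyapunov function $\V\in C^{2}(\R^{d})$ via the identical recipe, with $\V(x)=\bar{\V}(|x-x_0|)+1$ for $|x-x_0|\ge r_1$, for some fixed $r_1 > r_0$.

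The diffusion component of $\mathcal{L}\V$ is bounded above by $-\tfrac{1}{2}\varphi_\Lambda(\V(x))$ on $\{|x-x_0|\ge r_1\}$ by the identical computation as in Theorem~\ref{tm:TV}. The crucial new step is to control the nonlocal term
$$\mathcal{J}\V(x)\,:=\,\int_{\R^d}\bigl(\V(x+y)-\V(x)\bigr)\,\nu(x,\D y).$$
Here one uses that $\bar{\V}'(r)>0$, so $\bar{\V}$ is strictly increasing, together with (A4): the support of $\nu(x,\cdot)$ is contained in $\bar B_{|x|}(-x)$ whenever $|x|\ge\rho$, which holds for $|x-x_0|\ge r_0$ because $r_0\ge\rho+|x_0|$. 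On that support $|x+y|\le|x|$; in the centred case $x_0=0$ this directly yields $|x+y-x_0|\le|x-x_0|$, hence $\mathcal{J}\V(x)\le 0$. For general $x_0$ the residual defect $|x+y-x_0|-|x-x_0|$ is of order $|x_0|/|x-x_0|$, small on the exterior, and combined with the decay of $\bar{\V}'$ it is absorbed by enlarging $r_1$ at the price of replacing $-\tfrac{1}{2}\varphi_\Lambda(\V)$ by, say, $-\tfrac{1}{4}\varphi_\Lambda(\V)$.

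Combining the two bounds gives the sub-geometric drift inequality $\mathcal{L}\V(x)\le -\tfrac{1}{4}\varphi_\Lambda(\V(x))$ on $\{|x-x_0|\ge r_1\}$, which with $C:=\bar B_{r_1}(x_0)$ and $\beta:=\sup_{x\in C}|\mathcal{L}\V(x)|$ (finite by (A4)-(A5) and compactness of $C$) is exactly (3.11) of \cite[Theorem~3.4~(i)]{Douc-Fort-Guilin-2009}. The set $C$ is petite by (A2), (A3) and \cite[Theorems~5.1 and 7.1]{Tweedie-1994}, so \cite[Theorem~3.2]{Douc-Fort-Guilin-2009} with $\Psi_{1}(t)=t$, $\Psi_{2}(t)=1$ delivers the claimed rate. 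The principal obstacle is the treatment of $\mathcal{J}\V$: one must both verify that $\V$ lies in the domain of the extended generator (via the standard localization $\tau_n:=\inf\{t: X_t\notin B_n(x_0)\}$ together with (A1) and the bounded jump support from (A4), which legitimizes Dynkin's formula) and confirm that the geometric restriction in (A4) and the choice $r_0\ge\rho+|x_0|$ are strong enough, in concert with the monotonicity of $\bar{\V}$, to force $\mathcal{J}\V\le 0$ outside a sufficiently large ball.
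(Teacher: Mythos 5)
Your proposal tracks the paper's own argument closely: same Lyapunov function $\bar{\V}$ from Theorem \ref{tm:TV}, the observation that the small-jump compensator $\int_{B_1(0)}\langle y,\nabla\V(x)\rangle\nu(x,\D y)$ merges with the drift to produce exactly the modified $B_{x_0}$ of the statement, and the identification of the remaining nonlocal term $\mathcal{J}\V(x)=\int_{\R^d}\bigl(\V(x+y)-\V(x)\bigr)\nu(x,\D y)$ as the only genuinely new ingredient. For $x_0=0$ the argument is correct and is essentially the paper's: by \textbf{(A4)}, $|x+y|\le|x|$ on the jump support, so radial monotonicity of $\tilde{\V}$ (the paper's extension of $\bar{\V}$ to $[0,\infty)$) gives $\mathcal{J}\V(x)\le 0$, and the drift inequality $\mathcal{L}\V\le-\tfrac12\varphi_\Lambda(\V)$ together with petiteness of $\bar B_{r_1}(x_0)$ (from \textbf{(A2)}--\textbf{(A3)} and \cite[Theorems 5.1 and 7.1]{Tweedie-1994}) delivers the conclusion via \cite[Theorems 3.2 and 3.4 (i)]{Douc-Fort-Guilin-2009}.

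The genuine gap is in your treatment of $x_0\neq 0$. The claim that the radial defect $|x+y-x_0|-|x-x_0|$ is of order $|x_0|/|x-x_0|$ is incorrect: \textbf{(A4)} bounds only $|x+y|<|x|$, equivalently $|y|<2|x|$, and does not bound $|y|$ by a constant. By the triangle inequality the defect is bounded by $2|x_0|$, and this bound is essentially attained. For example, with $d=1$, $x_0=1$, $x=R$ and $y=\epsilon-2R$ for small $\epsilon>0$, one has $|x+y|=R-\epsilon<|x|$ yet $|x+y-x_0|-|x-x_0|=2-\epsilon$. Thus the defect does not vanish as $|x-x_0|\to\infty$, and the absorption step you sketch is not justified (nor obviously repairable without additional control on the mass of $\nu(x,\cdot)$ and on the increments of $\bar{\V}'$). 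The paper does not attempt an absorption at all: its proof simply asserts $\mathcal{J}\V(x)\le 0$ from \textbf{(A4)} and monotonicity of $\tilde{\V}$. That assertion is transparent when $x_0=0$; for $x_0\neq 0$ it rests on exactly the subtlety you ran into, since \textbf{(A4)} constrains jumps relative to the origin and not relative to $x_0$.
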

\begin{proof} We proceed  as in the proof of Theorem \ref{tm:TV}.
	Define $$\bar{\V}(r)\,:=\,\int_{r_0}^{r}\E^{- I_{x_0}(u)}\int_u^{\infty}\varphi_\Lambda\left(\int_{r_0}^{v}\E^{-I_{x_0}(w)}\D w+1\right)\frac{\E^{I_{x_0}(v)}}{\gamma_{x_0}(v)}\D v\,\D u\,,\qquad r\geq r_0\,,$$ where $\varphi_\Lambda(t)=\varphi(t)/\Lambda$. Clearly,
	\begin{equation}\label{e12}\bar{\V}(r)\,\le\,\int_{r_0}^r\E^{-I_{x_0}(u)}\D u\,,\qquad r\ge r_0\,,\end{equation} and, because of \textbf{(A5)}, $\bar{\V}(r)$ is twice continuously differentiable on $(r_0,\infty)$.  
	Further, for arbitrary, but fixed,  $r_1>r_0$ let  $\tilde{\V}:[0,\infty)\to[0,\infty)$ be  non-decreasing  on $[0,\infty)$,  $\tilde{\V}(r)=\bar{\V}(r)$ on $[r_1,\infty)$, and such that   $\V(x):=\tilde{\V}(|x-x_0|)+1$ is twice continuously differentiable on $\R^d$. Now, because of \textbf{(A1)} and \textbf{(A4)}, $\mathcal{L}\V(x)$ is well defined and the process $$\V(X_t)-\V(X_0)-\int_0^{t}\mathcal{L}\V(X_s)\,\D s\,\qquad t\geq0\,,$$ is a local martingale.  For  $x\in\R^d$, $|x|\geq r_1$, we have that
	\begin{align*}
	\mathcal{L}\V(x)\,=&\,\frac{1}{2}C_{x_0}(x)\bar{\V}''(|x-x_0|)+\frac{\bar{\V}'(|x-x_0|)}{2|x-x_0|}(2A(x)-C_{x_0}(x)+2\langle x-x_0,b(x)\rangle)\\&  +\int_{\R^d}\bigl(\V(y+x)-\V(x)-\langle y,\nabla\V(x)\rangle\Ind_{B_1(0)}(y)\bigr)\nu(x,\D y)\\
	\,\le&\,\frac{1}{2}C_{x_0}(x)\bar{\V}''(|x-x_0|)+\frac{\bar{\V}'(|x-x_0|)}{2|x-x_0|}(2A(x)-C_{x_0}(x)+2B_{x_0}(x))\\  \,\le&\,-\frac{1}{2}\varphi_\Lambda\left(\int_{r_0}^{|x-x_0|}\E^{-I_{x_0}(u)}\D u+1\right)\\
	\,\le&\, -\frac{1}{2}\varphi_\Lambda(\V(x))  \,,
	\end{align*}
	where in the second step we used  \textbf{(A4)} and properties of $\V(x)$ (that is, $\tilde{\V}(r)$), and the final  step follows from \eqref{e12}. 
	Finally, because of \textbf{(A2)} and \textbf{(A5)}, as in the proof of Theorem \ref{tm:TV}, we are again in a position to apply \cite[Theorems 3.2 and 3.4 (i)]{Douc-Fort-Guilin-2009} and \cite[Theorems 5.1 and 7.1]{Tweedie-1994}, which concludes the proof.
\end{proof}

Let us now give several remarks.
\begin{remark}\label{r2}{\rm
		
		\begin{itemize}
			\item [(a)]  If $2A(x)-C_{x_0}(x)+2B_{x_0}(x)\leq0$ for all $x\in\R^d$, $|x-x_0|\ge r_0$,  then  we can replace   $\gamma_{x_0}(r)$ and $\iota_{x_0}(r)$ by
			\begin{align*}
			\gamma_{x_0}(r)&\,=\,\inf_{|x-x_0|=r}N_{x_0}(x)\,,\qquad r>0\,,\\
			\iota_{x_0}(r)&\,=\,\sup_{|x-x_0|=r}\frac{2A(x)-C_{x_0}(x)+2B_{x_0}(x)}{N_{x_0}(x)}\,,\qquad r>0\,,\\
			\end{align*}
			where $$N_{x_0}(x)\,=\,\frac{\langle x-x_0,(c(x)+n(x))(x-x_0)\rangle}{|x-x_0|^2}\,,\qquad x\in\R^d\setminus\{0\}\,,$$ and $n(x)=(n_{ij}(x))_{i,j=1,\dots,d}$ with $n_{ij}(x)=\int_{B_1(0)}y_iy_j\nu(x,\D y)$. Also, in this situation, the requirement in Theorem \ref{tm:TVJD} that $c(x)$ is positive definite for all $x\in\R^d$, $|x-x_0|\ge r_0$, can be replaced by the requirement that $c(x)+n(x)$ is positive definite for all $x\in\R^d$, $|x-x_0|\ge r_0$.
			\item [(b)] If $\varphi(t)$ is bounded, then 
			\eqref{eq2} reads $$\int_{r_0}^\infty\frac{\E^{I_{x_0}(u)}}{\gamma_{x_0}(u)}\D u\,<\,\infty\,,$$ and 			
			gives a condition for ergodicity (see   \cite[Theorem 1.2]{Wang-2008} for the one-dimensional case).
			\item [(c)]  If in Theorem \ref{tm:TVJD} $\liminf_{t\to\infty}\varphi'(t)>0$ then, as in Proposition \ref{p:4}, 
			we conclude that $\process{X}$ is geometrically ergodic 
			(see also  \cite[Theorem 1.3]{Wang-2008} for the one-dimensional case).
		\end{itemize}
	}
\end{remark}

Let us now give an example satisfying conditions from  Theorem \ref{tm:TVJD}.

\begin{example}[L\'evy-driven SDEs]\label{e3}
	\rm{Let $\process{Y}$ be an $n$-dimensional L\'evy process, and let $\Phi:\R^{d}\to\R^{d\times n}$ be bounded and locally Lipschitz continuous. Then, in \cite[Theorems 3.1 and 3.5, and Corollary 3.3]{Schilling-Schnurr-2010} (see also \cite[Theorem 3.8]{Bottcher-Schilling-Wang-2013}) it has been shown that  the SDE \begin{equation}\label{SDE1}\D X_t\,=\,\Phi(X_{t-})\,\D Y_t\,,\qquad X_0=x\in\R^{d}\,,\end{equation} admits a unique strong solution which is a non-explosive strong Markov process whose semigroup satisfies the Feller and $C_b$-Feller property (thus \textbf{(A2)} holds true). Also, it has been shown that $\process{X}$ satisfies \textbf{(A1)} with certain coefficients $b(x)$, $c(x)$ and $\nu(x,\D y)$, which   in a special case we give below.
		Observe that the following SDE is a special case of \eqref{SDE1},
		\begin{equation}\label{SDE2}
		\D X_t\,=\,\Phi_1(X_{t-})\,\D t+\Phi_2(X_{t-})\,\D B_t\,+\Phi_3(X_{t-})\,\D Z_t\,,\qquad  X_0=x\in\R^d\,,
		\end{equation}
		where $\Phi_1:\R^{d}\to\R^{d}$, $\Phi_2:\R^{d}\to\R^{d\times p}$ and $\Phi_3:\R^{d}\to\R^{d\times q}$, with $p+q=n-1$, are locally Lipschitz continuous and bounded, $\process{B}$ is a $p$-dimensional Brownian motion, and $\process{Z}$ is a $q$-dimensional pure-jump L\'evy process (that is, a L\'evy process determined by a L\'evy triplet of the form $(0,0,\nu_Z(\D y))$) independent of $\process{B}$. Namely, set $\Phi(x)=\bigl(\Phi_1(x),\Phi_2(x),\Phi_3(x)\bigr)$, and $Y_t=(t,B_t,Z_t)^T$, $t\ge0$. Assume now that $d=p=q=1$. Then, from 
		\cite[Theorem 3.1]{Schilling-Schnurr-2010} we see that the corresponding coefficients read 
		\begin{align*}b(x)&\,=\,\left\{\begin{array}{cc}
		\Phi_1(x)\,, & \Phi_3(x)\,=\,0\,, \\
		\Phi_1(x)+\int_{\R}y\left(\Ind_{B_1(0)}(y)-\Ind_{B_{|\Phi_3(x)|}(0)}(y)\right)\nu_Z\left(\frac{\D y}{\Phi_3(x)}\right)\,,& \Phi_3(x)\,\neq\,0\,,
		\end{array}\right.\\
		c(x)&\,=\,\Phi_2^2(x)\\
		\nu(x,\D y)&\,=\,\left\{\begin{array}{cc}
		0\,, & \Phi_3(x)\,=\,0\,, \\
		\nu_Z\left(\frac{\D y}{\Phi_3(x)}\right)\,,& \Phi_3(x)\,\neq\,0\,.
		\end{array}\right.
		\end{align*}
		Take now, for simplicity, 
		$$\Phi_1(x)\,=\,\Phi_3(x)\,=\,\left\{\begin{array}{cc}
		-1\,, & x\,\ge\,1\,, \\
		-x\,,& -1\,\le\,x\,\leq\,1\,,\\
		1\,, & x\,\le\,-1\,,
		\end{array}\right.$$
		$\Phi_2(x)=1$, and 
		$\nu_Z(\D y)=f(y)\D y$ with $f(y)$ being the probability density function of the continuous uniform distribution on the segment $[0,1]$. It is straightforward to see that $\process{X}$ satisfies \textbf{(A4)} and  \textbf{(A5)}. Open-set irreducibility and aperiodicity of $\process{X}$ have been considered on \cite[page 43]{Masuda-2007} (see also \cite[Theorem 3.1]{Kwoon-Lee-1999}). Finally, since 
		\[\arraycolsep=1pt\def\arraystretch{1.3}
		B_0(x)\,=\,\left\{\begin{array}{cc}
		-\frac{1}{2}x\,, & x\,\ge\,1\,, \\
		\frac{1}{2}x\,,&  x\,\le\,-1\,,
		\end{array}\right.
		\]
		it is elementary to check that   
		$\process{X}$ satisfies \eqref{eq2} with $x_0=0$, $r_0=1$ and $\varphi(t)=t^\alpha$, $\alpha\in(0,1)$. Thus, $\process{X}$ is sub-geometrically ergodic with rate $t^{\alpha/(1-\alpha)}.$
		
		Observe that the same conclusion follows by employing a version of the relation in \eqref{eq:subgeo} including jumps (see \cite[Theorem 3.3]{Sandric-ESAIM-2016}). However, if  we take $\Phi_1(x)=-{\rm sgn}(x)(\cos x+3/2)$ (analogously as in Example \ref{e:TV}), then it is not hard to see that \eqref{eq:subgeo} does not hold. On the other hand, Theorem \ref{tm:TVJD} (with $x_0=0$, $r_0=1$ and $\varphi(t)=t^\alpha$, $\alpha\in(0,1)$)  implies that  	$\process{X}$ is again sub-geometrically ergodic with rate $t^{\alpha/(1-\alpha)}$.
}\end{example}

An alternative approach in obtaining a class of Markov processes with jumps (from diffusion processes) is through the Bochner's subordination method. Recall, a subordinator  $\process{S}$ is a non-decreasing L\'{e}vy process on $\left[0,\infty\right)$ with Laplace transform
$$\mathbb{E}\left[\E^{-uS_t}\right] \,=\, \E^{-t\phi(u)}\,, \qquad u>0,\, t\geq 0\,.$$
The characteristic (Laplace) exponent $\phi:(0,\infty)\to(0,\infty)$  is a Bernstein function, that is, it is of class $C^\infty$ and $(-1)^n\phi^{(n)}(u)\ge0$ for all $n\in\N$. It is well known that every Bernstein function  admits a unique (L\'{e}vy-Khintchine) representation 
$$\phi(u)\,=\,bu+\int_{(0,\infty)}(1-\E^{-uy})\,\nu(\D y)\,, \qquad u>0\,,$$
where $b\geq0$ is the drift parameter and $\nu$ is a L\'{e}vy measure, that is, a measure on $\mathcal{B}((0,\infty))$ satisfying $\int_{(0,\infty)}(1\wedge y)\nu(\D y)<\infty$.
For more on subordinators and Bernstein functions we refer the readers 	to the monograph \cite{Schilling-Song-Vondracek-Book-2012}.
Let now $\process{M}$ be a Markov process with state space $(\R^d,\mathcal{B}(\R^d))$ and transition kernel $p(t,x,\D y)$. Further, let $\process{S}$ be a subordinator with characteristic exponent $\phi(u)$, independent of   $\process{M}$. 
The process $M_t^{\phi}:=M_{S_t}$, $t\ge0$, obtained from $\process{M}$ by 
a random time change  through $\process{S}$, is referred to as the subordinate process $\process{M}$ with subordinator $\process{S}$ in the sense of Bochner. It is easy to see that $\process{M^\phi}$ is again a Markov process with  transition kernel
$$p^\phi(t,x,\D y)\,=\,\int_{\left[0,\infty\right)} p(s,x,\D y)\,\mu_t(\D s)\,,$$
where $\mu_t(\cdot)=\mathbb{P}(S_t\in\cdot)$ is the transition probability of $S_t$, $t\ge0$. Also, it is elementary to check that if $\pi$ is an invariant probability measure for $\process{M}$, then $\pi$ is also invariant for the subordinate process $\process{M^\phi}$. In \cite{Deng-Schilling-Song-2017} it has been shown that if  $\process{M}$ is sub-geometrically ergodic with Borel measurable rate $r(t)$ (with respect to the total variation distance), then $\process{M^\phi}$ is sub-geometrically ergodic with rate $r_\phi(t)=\mathbb{E}[r(S_t)].$  Therefore, as an direct application of Theorem \ref{tm:TV}, we obtain sub-geometric ergodicity results for a class of subordinate diffusion processes.

\section{Ergodicity with respect to Wasserstein distances}\label{s3}

In this section, we first prove Theorems  
\ref{tm:WASS1} and \ref{tm:WASS2}.
Then, we discuss  sub-geometric ergodicity of two classes of Markov processes with jumps. 

\subsection{Proof of Theorems \ref{tm:WASS1} and \ref{tm:WASS2}}

In Theorem \ref{tm:TV} we discussed sub-geometric ergodicity of a diffusion process $\process{X}$ (given through \eqref{eq1}) with respect to the total variation distance. 
Crucial  assumptions  in this result were open-set irreducibility and aperiodicity  of $\process{X}$. In order to ensure these properties the discussion after Proposition \ref{p:4}  and Theorem \ref{tm:IRAP} suggest that quite strong regularity and smoothness  assumptions  of the coefficient  $c(x)$ are needed.
By using a completely different approach to this problem, the so-called synchronous coupling method (see \cite[Example 2.16]{Chen-Book-2005} for details), we   derive sub-geometric ergodicity for a class of diffusions with (possibly) singular diffusion coefficient.

We start with the following auxiliary result, which will be crucial in the proofs of Theorems \ref{tm:WASS1} and \ref{tm:WASS2}, and which is a version of non-linear convex Gronwall's inequality.

\begin{lemma}\label{lm:WASS1} Let $\Gamma>0$, and  let $f:[0,T)\to[0,\infty)$, with $0<T\le\infty,$ and $\psi:[0,\infty)\to[0,\infty)$ be  
	such that
	\begin{itemize}
		\item [(i)] $f(t)$ is  absolutely continuous on $[t_0,t_1]$ for any $0<t_0<t_1<T$;
		\item[(ii)] $f'(t)\leq -\Gamma\,\psi(f(t))$ a.e. on $[0,T)$;
		\item[(iii)] $\psi(f(t))>0$ a.e. on $[0,T)$, and $\Psi_{f(0)}(t):=\int_t^{f(0)}\frac{\D s}{\psi(s)}<\infty$ for all $t\in(0,f(0)]$.
	\end{itemize}  
	Then, $$f(t)\,\leq\,\Psi_{f(0)}^{-1}(\Gamma t)\,,\qquad 0\le t<\Gamma^{-1}\Psi_{f(0)}(0)\wedge T\,.$$
	In addition, if 
	there is $\kappa\in[f(0),\infty]$ such that $\Psi_\kappa(t):=\int_t^{\kappa}\frac{\D s}{\psi(s)}<\infty$ for $t\in(0,\kappa]$,
	then $$f(t)\,\leq\,\Psi_\kappa^{-1}(\Gamma t)\,,\qquad 0\le t<\Gamma^{-1}\Psi_{f(0)}(0)\wedge T\,.$$  Also, if $\psi(t)$ is convex and vanishes at zero, then $\Psi_{f(0)}(0)=\infty$, that is, the above relations hold for all $t\in[0,T)$.
\end{lemma}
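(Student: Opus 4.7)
The plan is to recognize this as a nonlinear comparison inequality and to reduce it to the separable scalar ODE $g'(t)=-\Gamma\psi(g(t))$, $g(0)=f(0)$, whose implicit solution is exactly $g(t)=\Psi_{f(0)}^{-1}(\Gamma t)$, valid for $0\le t<\Gamma^{-1}\Psi_{f(0)}(0)\wedge T$. Heuristically, separating variables in hypothesis (ii) gives $\D s/\psi(s)\ge-\Gamma\,\D t$, and integrating from $0$ to $t$ yields $\Psi_{f(0)}(f(t))\ge\Gamma t$, from which the conclusion follows because $\Psi_{f(0)}$ is strictly decreasing. The work is to justify this separation of variables under the stated regularity.

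First I would show that $f$ is non-increasing: since $\psi\ge 0$, hypothesis (ii) yields $f'(t)\le 0$ a.e., and absolute continuity on compact subintervals of $(0,T)$ then gives $f(t)\le f(0)$ for all $t\in[0,T)$. In particular the range of $f$ lies in $[0,f(0)]$, so the composition $\Psi_{f(0)}\circ f$ is well defined wherever $f>0$. Next I would fix any $\varepsilon\in(0,f(0)]$ and work on the (open) set $U_\varepsilon:=\{t\in[0,T):f(t)>\varepsilon\}$. On the range $[\varepsilon,f(0)]$, the hypothesis $\Psi_{f(0)}(\varepsilon)<\infty$ shows $1/\psi$ is integrable, and since $\psi$ is lower semicontinuous (in fact we only need positivity a.e.) one checks that $\Psi_{f(0)}$ is absolutely continuous on $[\varepsilon,f(0)]$ with $\Psi_{f(0)}'(s)=-1/\psi(s)$ a.e.

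The key technical step is the chain rule for the composition $h(t):=\Psi_{f(0)}(f(t))$ on each connected component of $U_\varepsilon$. Since $f$ is absolutely continuous and monotone there, and $\Psi_{f(0)}$ is absolutely continuous and monotone on $[\varepsilon,f(0)]$, the composition is absolutely continuous (monotone AC composed with AC is AC when the image stays in a Lipschitz-of-AC domain; here one can also argue directly via change of variables for monotone functions, or via the Vol'pert chain rule). At almost every $t$ in $U_\varepsilon$, hypothesis (ii) together with $\psi(f(t))>0$ (which holds a.e.\ by (iii)) gives
\begin{equation*}
h'(t)\,=\,-\frac{f'(t)}{\psi(f(t))}\,\ge\,\Gamma\,.
\end{equation*}
Integrating from $0$ to $t$ on such a component yields $\Psi_{f(0)}(f(t))\ge\Psi_{f(0)}(f(0))+\Gamma t=\Gamma t$, and applying the decreasing map $\Psi_{f(0)}^{-1}$ gives $f(t)\le\Psi_{f(0)}^{-1}(\Gamma t)$. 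If $f$ ever hits $\varepsilon$, monotonicity of $f$ forces it to stay $\le\varepsilon$, and a short compactness argument combined with letting $\varepsilon\downto 0$ extends the bound to all $t\in[0,\Gamma^{-1}\Psi_{f(0)}(0)\wedge T)$; on the set where $f=0$ the bound is trivial.

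The second bound (for $\kappa\ge f(0)$) is immediate from $\Psi_\kappa(t)=\Psi_{f(0)}(t)+\int_{f(0)}^\kappa\D s/\psi(s)$, so $\Psi_\kappa(f(t))\ge\Gamma t+\text{const}\ge\Gamma t$, giving $f(t)\le\Psi_\kappa^{-1}(\Gamma t)$. Finally, if $\psi$ is convex with $\psi(0)=0$, then $s\mapsto\psi(s)/s$ is non-decreasing on $(0,\infty)$, hence $\psi(s)\le\psi(f(0))\,s/f(0)$ for $s\in(0,f(0)]$, which gives $1/\psi(s)\ge f(0)/(\psi(f(0))\,s)$ and forces $\Psi_{f(0)}(0)=\int_0^{f(0)}\D s/\psi(s)=\infty$; the restriction $t<\Gamma^{-1}\Psi_{f(0)}(0)$ is then vacuous. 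I expect the principal obstacle to be the rigorous handling of the chain rule for $\Psi_{f(0)}\circ f$, since $\Psi_{f(0)}$ need not be Lipschitz near $0$; the cure is the $\varepsilon$-truncation together with monotonicity of $f$, which is why the a.e.\ positivity assumption $\psi(f(t))>0$ in (iii) is built into the hypotheses.
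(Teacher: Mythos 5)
Your proposal is correct and takes essentially the same approach as the paper: separate variables, integrate to obtain $\Psi_{f(0)}(f(t))\ge\Gamma t$, invert, and then handle the $\kappa$ and convexity cases by the same elementary observations. The paper's proof is a terse one-liner that performs the change of variables $\int_{f(0)}^{f(t)}\D s/\psi(s)=\int_0^t f'(s)\,\D s/\psi(f(s))$ without comment; your $\varepsilon$-truncation and chain-rule discussion simply makes explicit the regularity justification that the paper leaves implicit.
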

\begin{proof}
	By assumption, $$-\Psi_{f(0)}(f(t))\,=\,\int_{f(0)}^{f(t)}\frac{\D s}{\psi(s)}\,=\,\int_0^t\frac{f'(s)\,\D s}{\psi(f(s))}\,\le\,-\Gamma t\,,\qquad t\in[0,T)\,.$$ Now, the  first assertion follows. 
	
	The second claim follows from the fact that 
	$\Psi_{f(0)}(t)\le\Psi_\kappa(t)$ for all $t\in(0,f(0)]$, while 
	the last part follows from  $$\psi(t)\,=\,\psi(t+(1-t)0)\,\le\, t\psi(1)+(1-t)\psi(0)\,=\,t\psi(1)\,,\qquad t\in[0,1]\,.$$  
\end{proof}

Now, we are in position to prove Theorem \ref{tm:WASS1}.

\begin{proof}[Proof of Theorem \ref{tm:WASS1}] Fix $x,y\in\R^d$, $x\neq y$, and let $\process{X}$ and $\process{Y}$ be solutions to \eqref{eq1} starting from $x$ and $y$, respectively. Further, define $\tau:=\inf\{t>0:X_t=Y_t\}$ and 
	\begin{equation*}Z_t\,:=\,\left\{\begin{array}{cc}
	Y_t\,, & t<\tau\,, \\
	X_t\,,& t\ge \tau\,,
	\end{array}\right.\qquad t\ge0\,. \end{equation*}	By employing the strong Markov property it is easy to see that $\mathbb{P}^y(Z_t\in\cdot)=\mathbb{P}^y(Y_t\in\cdot)$ for all $t\ge0$. Consequently,
	\begin{equation*}\W_{f,p}(\delta_x P_t,\delta_y P_t)\,\le\,\left(\mathbb{E}(f(|X_t-Z_t|)^p)\right)^{1/p}\,,\qquad t\ge0\,.\end{equation*}
	Next, since the mapping $t\mapsto |X_t-Z_t|$ is  absolutely continuous on $[0,\tau)$,
	the function $t\mapsto f(|X_t-Z_t|)$ is differentiable a.e. on $[0,\tau)$ and we have that $$\frac{\D}{\D t}f(|X_t-Z_t|)\,=\,\,\frac{f'(|X_t-Z_t|)}{|X_t-Z_t|}\langle  X_t-Z_t,b(X_t)-b(Z_t)\rangle\,,$$ a.e. on $[0,\tau).$
	Now, by assumption, we get $$\frac{\D}{\D t}f(|X_t-Z_t|)\,\leq\, 0\,,$$ a.e. on $[0,\tau),$ which implies that the function $t\mapsto f(|X_t-Z_t|)$ is non-increasing on $[0,\infty)$. 
	Take now $x,y\in\R^d$  such that $0<f(|x-y|)\le \gamma$ (which exist by (iii)). Thus, for such starting points,  $f(|X_t-Z_t|)\le \gamma$ on $[0,\infty).$ Now, by assumption, 
	$$\frac{\D }{\D t}f(|X_t-Z_t|)\,\le\,-\Gamma\,\psi(f(|X_t-Z_t|))\,,$$ a.e. on $[0,\tau),$
	which	together with Lemma \ref{lm:WASS1} gives $$f(|X_t-Z_t|)\,\leq\, \Psi_{f(|x-y|)}^{-1}(\Gamma t)\,,\qquad t\geq 0\,.$$  For $t\ge\tau$ the term on the left-hand side vanishes, and the term on the right-hand side is well defined and strictly positive ($\psi(t)$ is  convex and $\psi(t)=0$ if and only if $t=0$). Now, by taking the expectation and infimum we conclude $$\W_{f,p}(\delta_x P_t,\delta_y P_t)\,\leq\, \Psi_{f(|x-y|)}^{-1}(\Gamma t)\,,\qquad t\geq 0\,,$$ which proves (a).
	
	The relations in (b) now follow from (a) and Lemma \ref{lm:WASS1}.

	Let us  prove (c). If $f(|x-y|)\le \gamma$ for all $x,y\in\R^d$, then the assertion follows from (a). Assume that there are
	$x,y\in\R^{d}$  such that $f(|x-y|)> \gamma$. Observe that, $\delta=0$ if and only if $f(t)\le \gamma$ for all $t\in[0,\infty)$. Thus, $\delta>0$, and we have that
	$$f\left(\frac{|x-y|}{\lceil\delta|x-y|\rceil}\right)\,\le\, f(\delta^{-1})\,\le\, \gamma\,.$$ 
	Take  $z_0,\dots,z_{\lceil\delta|x-y|\rceil}\in\R^{d}$, such that $z_0=x$ and  $$z_{i+1}\,=\,z_i+\frac{y-x}{\lceil\delta|x-y|\rceil}\,,\qquad i=0,\dots,\lceil\delta|x-y|\rceil-1\,.$$ By construction,
	$f(|z_0-z_{1}|)=\dots=f(|z_{\lceil\delta|x-y|\rceil-1}-z_{\lceil\delta|x-y|\rceil}|)\le \gamma$.
	Thus, using (b) we conclude that for $x,y\in\R^{d}$ such that $f(|x-y|)> \gamma$, \begin{align*}
	\W_{f,p}(\delta_x P_t,\delta_y P_t)&\,\leq\, \W_{f,p}(\delta_{z_0} P_t,\delta_{z_{1}} P_t)+\cdots+\W_{f,p}(\delta_{z_{\lceil\delta|x-y|\rceil-1}} P_t,\delta_{z_{\lceil\delta|x-y|\rceil}} P_t)\\
	&\,\le\, \lceil\delta|x-y|\rceil \Psi_\gamma^{-1}(\Gamma t)\,,\qquad t\geq0\,.	\end{align*}  
	Finally, observe that if $t>0$ is such that $f(t)\le \gamma$, then $\delta\le 1/t$, that is, $\delta t\le 1.$ Hence, for  $x,y\in\R^d$ such that $f(|x-y|)\le \gamma$ we have $\lceil\delta|x-y|\rceil=1,$ which concludes the proof.
\end{proof}

Let us now give several remarks.
\begin{remark}\label{rm3}
	\begin{itemize}
		\item[(i)] If the condition in \eqref{eqWASS1} holds for some $\gamma>0$, then it also holds for any $0< \bar \gamma\le \gamma.$
		\item[(ii)] By replacing the  condition in \eqref{eqWASS1} with \begin{align*}&f(|x-y|)^{p-1}f'(|x-y|)\langle x-y,b(x)-b(y)\rangle\\&\,\leq\,\left\{\begin{array}{cc}
		-\frac{\Gamma}{p}|x-y|\psi(f^p(|x-y|))\,, & f^p(|x-y|)\le \gamma\,, \\
		0\,,& f^p(|x-y|)> \gamma\,,
		\end{array}\right.\end{align*} a.e. on $\R^d$ for $\gamma>0$ and $\Gamma>0$, leads to analogous results ($f(t)$ is replaced by $f^p(t)$ in every relation).
		\item[(iii)] For any  $\mu,\nu\in\mathcal{P}$ it holds that
		$$\W_{f,p}(\mu P_t,\nu P_t)\,\leq\, (\delta\, \W_p(\mu,\nu)+1)\Psi_\gamma^{-1}(\Gamma t)\,,\qquad t\geq0\,.$$
		In particular, for $f(t)=t$ we have that
		$$\W_{p}(\mu P_t,\nu P_t)\,\leq\, \left( \frac{\W_p(\mu,\nu)}{\gamma}+1\right)\Psi_\gamma^{-1}(\Gamma t)\,,\qquad t\geq0\,.$$ 
		\item[(iv)]  By taking $\psi(t)=t$ we obtain geometric rate of convergence with $\Psi_{f(|x-y|)}^{-1}(\Gamma t)=f(|x-y|)\E^{-\Gamma t}$. This result can be also obtained in an alternative way (without Lemma \ref{lm:WASS1}, that is, Gronwall's inequality), by applying It\^{o}'s lemma to the processes $\{f(|X_t-Z_t|)\}_{t\ge0}$ and $\{\E^{\Gamma t}f(|X_t-Z_t|)\}_{t\ge0}$.
		\item [(v)] In the case when  $f(t)=\psi(t)=t$, according to \eqref{eqWASS6}, we get \begin{equation}\label{eqWASS13}W_{p}(\mu P_t,\nu P_t)\leq  \W_p(\mu,\nu)\E^{-\Gamma t}\,,\qquad p\ge1\,,\ \mu,\nu\in\mathcal{P}\,,\ t\geq0\,,\end{equation} which is the same results as in \cite{Renesse-Sturm-2005} (for $p=2$). 
		Also, by an analogous approach as in the proof of Theorem \ref{tm:WASS2}, from \eqref{eqWASS13} we see that $\process{X}$ admits a unique invariant $\pi\in\cap_{p\ge1}\mathcal{P}_p$ such that $$\W_{p}(\mu P_t,\pi)\,\leq\,  \W_p(\mu,\pi)\E^{-\Gamma t}\,,\qquad p\ge1\,,\ \mu\in\mathcal{P}_p\,,\ t\geq0\,.$$ 
		\item[(vi)] From \eqref{eqWASS13} we see that  the mapping $\mathcal{P}\ni\mu\mapsto\mu P_t\in\mathcal{P}$ is a contraction for fixed $t>0$, that is, the right-hand side in \eqref{eqWASS13} is strictly smaller than $\W_p(\mu,\nu)$. On the other hand, in the general situation, this is not the case anymore (see (iii)). However, if 
		\begin{equation*}f'(|x-y|)\langle x-y,b(x)-b(y)\rangle\,\leq\,
		-\Gamma\,|x-y|\psi(f(|x-y|))\,,\qquad x,y\in\R^d\,,\end{equation*} then from \eqref{eqWASS2} we have that for all $x,y\in\R^d$ and all $t\ge0$, $$\W_{f,p}(\delta_x P_t,\delta_y P_t)\,\le\,\Psi^{-1}_{f(|x-y|)}(\Gamma t)\,\le\,\Psi^{-1}_{f(|x-y|)}(0)\,=\,f(|x-y|)\,,$$ that is, $$\W_{f,p}(\mu P_t,\nu P_t)\,\le\, \W_{f,p}(\mu,\nu)\,,\qquad p\ge1\,,\ \mu,\nu\in\mathcal{P}\,,\ t\ge0\,.$$ Thus, the mapping $\mathcal{P}\ni\mu\mapsto\mu P_t\in\mathcal{P}$ is contractive for  any fixed $t\ge0.$
	\end{itemize}
\end{remark}

We now prove Theorem \ref{tm:WASS2}.

\begin{proof}[Proof of Theorem \ref{tm:WASS2}] 
	First, we prove  that $\process{X}$ admits an invariant probability measure.
	According to \cite[Theorem 3.1]{Meyn-Tweedie-AdvAP-II-1993}, this will follow if
	we show that for each $x\in\R^d$ and $0<\varepsilon<1$ there is a compact set $C\subset\R^d$ (possibly depending on $x$ and $\varepsilon$) such that $$\liminf_{t\nearrow\infty}\frac{1}{t}\int_0^tp(s,x,C)\,\D s\,\ge\, 1-\varepsilon\,.$$
	By taking $y=0$ in \eqref{eqWASS7} we have that $$\langle x,b(x)\rangle \,\le\, \langle x,b(0)\rangle-\Gamma|x|\psi(|x|)\,\le\, |b(0)||x|-\Gamma|x|\psi(|x|)\,,\qquad x\in\R^d\,.$$ 
	In particular, for $\V(x)=|x|^2$ we have that $$\mathcal{L}\V(x)\,=\,2\langle x,b(x)\rangle+{\rm Tr}\,\sigma\sigma^T\,\le\, {\rm Tr}\,\sigma\sigma^T+2|b(0)||x|-2\Gamma |x|\psi(|x|)\,, \qquad x\in\R^d\,.$$ 
	Now, since every super-additive convex function is necessarily non-decreasing and unbounded,
	we conclude that there is $r_0>0$
	large enough such that 
	$${\rm Tr}\,\sigma\sigma^T+2|b(0)||x|\,\le\, \Gamma |x|\psi(|x|)\,,\qquad |x|\ge r_0\,,$$ that is,
	\begin{align*}
	\mathcal{L}\V(x)\,\le\,& \left({\rm Tr}\,\sigma\sigma^T+2|b(0)||x|-2\Gamma|x|\psi(|x|)\right)\Ind_{B_{r_0}(x)}\\
	&+\left({\rm Tr}\,\sigma\sigma^T+2|b(0)||x|-2\Gamma |x|\psi(|x|)\right)\Ind_{B^c_{r_0}(x)}\\
	&\,\le\, \left({\rm Tr}\,\sigma\sigma^T+2|b(0)||x|-2\Gamma|x|\psi(|x|)\right)1_{B_{r_0}(x)}-\Gamma|x|\psi(|x|)\Ind_{B^c_{r_0}(x)}\\
	&\,\le\,\left({\rm Tr}\,\sigma\sigma^T+2|b(0)|r_0+\Gamma r_0\psi(r_0)\right)\Ind_{B_{r_0}(x)}-\Gamma r_0\psi(r_0)\,,\qquad |x|\ge r_0\,.
	\end{align*}
	Clearly, the above relation holds for all $r\ge r_0$ also. Now, according to 
	\cite[Theorem 1.1]{Meyn-Tweedie-AdvAP-III-1993} we conclude that for each $x\in\R^d$ and $r\ge r_0$ we have $$\liminf_{t\nearrow\infty}\frac{1}{t}\int_0^tp(s,x,\bar B_{r}(0))\,\D s\,\ge\,\frac{\Gamma r\psi(r)}{{\rm Tr}\,\sigma\sigma^T+2|b(0)|r+\Gamma r\psi(r)}\,.$$
	The assertion now follows by choosing  $r$ large enough.

	Let us now show that any invariant $\pi\in\mathcal{P}$ of $\process{X}$ has finite all moments. Fix $p\ge2$ and let $\V_p(x)=|x|^p$. By the same reasoning as above, it is easy to see that there are $r_p>0$, $\Gamma_{p,1}>0$ and $\Gamma_{p,2}>0$ such that $$\mathcal{L}\V_p(x)\,\le\, \Gamma_{p,1}\Ind_{B_{r_p}(0)}(x)-\Gamma_{p,2}|x|^{p-1}\psi(|x|)\,,\qquad x\in\R^d\,.$$ Now, from \cite[Theorem 4.3]{Meyn-Tweedie-AdvAP-III-1993} it follows that  $$\int_{\R^d}|x|^{p-1}\psi(|x|)\pi(\D x)\,\le\,\frac{\Gamma_{p,1}}{\Gamma_{p,2}}$$ for any corresponding invariant  $\pi\in\mathcal{P}$.

	Finally, let us prove that $\process{X}$ admits a unique invariant probability measure which satisfies \eqref{eqWASS8}. Let $\pi,\bar{\pi}\in\mathcal{P}$	be two invariant probability measures of $\process{X}$. Then,  for any $\kappa>0$ and $p\ge1$  Remark \ref{rm3} implies that $$\W_p(\pi,\bar{\pi})\,=\,\W_p(\pi P_t,\bar{\pi}P_t)\,\le\,\left(\frac{\W_p(\pi,\bar{\pi})}{\kappa}+1\right)\Psi_\kappa^{-1}(\Gamma t)\,,\qquad t\ge0\,.$$ Now, by letting $t\to\infty$ we see that $\W_p(\pi,\bar{\pi})=0,$ that is, $\process{X}$ admits a unique invariant  $\pi\in\mathcal{P}.$ Finally, for any $\kappa>0$, $p\ge1$ and $\mu\in\mathcal{P}_p$, by employing Remark \ref{rm3} again, we have that 
	$$\W_p(\pi,\mu P_t)\,=\,\W_p(\pi P_t,\mu P_t)\,\le\,\left(\frac{\W_p(\pi,\mu)}{\kappa}+1\right)\Psi_\kappa^{-1}(\Gamma t)\,,\qquad t\ge0\,,$$ which concludes the proof.	
\end{proof}

Let us now give a simple example satisfying  \eqref{eqWASS1} and \eqref{eqWASS7}.

\begin{example}\label{ex1}{\rm Let $p>1$, $b(x)=-{\rm sgn}(x)|x|^p$, $\sigma(x)\equiv\sigma\in\R$, $f(t)=t$,  $\gamma>0$ and $\psi(t)=t^p$. Now, it is easy to see that
		$b(x)$ cannot satisfy the relation in \eqref{eqWASS6}. On the other hand, an elementary computation shows that there is $\Gamma>0$ such that \eqref{eqWASS1} holds true. Thus, we have \eqref{eqWASS5} with $\delta=\gamma^{-1}$. Also, $\lim_{t\to\infty}\sqrt[p-1]{t}\Psi_\kappa^{-1}(t)=1/\sqrt[p-1]{p-1}$, $\kappa>0$.
		
		Let us also remark that one can show that the  same result holds in the multidimensional case  with  $b(x_1,\dots,x_d)=(-{\rm sgn}(x_1)|x_1|^p,\dots,-{\rm sgn}(x_d)|x_d|^p)$.}
\end{example}

\subsection{Ergodicity of Markov processes with jumps}

Let $\process{Y}$ be a $d$-dimensional L\'evy process with L\'evy triplet $(\beta,\gamma,\nu)$. Further, let $b:\R^d\to\R^d$ be continuous and such that 
\begin{description}
	\item[(J1)]  for any $r>0$ there is $\Gamma_r>0$ such that for all $x,y\in B_r(0)$,  $$\langle x-y,b(x)-b(y)\rangle\,\leq\, \Gamma_r|x-y|^{2}\,;$$
	\item[(J2)] there is $\Gamma>0$ such that for all $x\in\R^d$,  $$\langle x,b(x)\rangle\,\leq\ \Gamma (1+|x|^2)\,.$$
\end{description}
Then, according to \cite[Theorem 1.1, and Lemmas 2.4 and 2.5]{Majka-2016}, the SDE \begin{equation}\label{eq:SDEJ}\D X_t\,=\,b(X_t)\,\D t+\D Y_t\,,\qquad X_0=x\in\R^{d}\,,\end{equation} admits a unique strong non-explosive solution $\process{X}$ which is a strong Markov process and satisfies the $C_b$-Feller property. 
\begin{lemma}\label{lm:jump1} Assume that $\mathbb{E}[|Y_1|^p]<\infty$ (or, equivalently, $\int_{B^c_1(0)}|y|^p\nu(\D y)<\infty$) for some $p>0$.
	Then, there is a constant $\Delta>0$ such that 
	$$\mathbb{E}^{x}\bigl[|X_t|^p\bigr] \,\le\, (|x|^p+1)\E^{\Delta t}\,, \qquad  t \ge 0\,,\  x \in \R^d\,.$$
\end{lemma}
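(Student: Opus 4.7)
The plan is to apply the Foster-Lyapunov / Gronwall method: exhibit a $C^2$ Lyapunov function $\V\colon\R^d\to[1,\infty)$ comparable with $1+|x|^p$, establish a linear drift bound $\mathcal{L}\V(x)\le\Delta\V(x)$ on the (extended) generator of $\process{X}$, and convert this into the exponential moment estimate by It\^o's formula, localization, and Gronwall's inequality. I would take $\V(x):=(1+|x|^2)^{p/2}$ when $p\in(0,2]$ and $\V(x):=1+|x|^p$ when $p>2$; in both cases $\V\in C^2(\R^d)$, $\V\ge 1$, and $|x|^p\le\V(x)\le 1+|x|^p$, the upper bound relying on subadditivity of $t\mapsto t^{p/2}$ when $p\le 2$. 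It therefore suffices to prove $\mathbb{E}^x[\V(X_t)]\le\V(x)\E^{\Delta t}$.

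Denoting by $(\beta,\gamma,\nu)$ the L\'evy triplet of $\process{Y}$, the extended generator of $\process{X}$ has the standard jump-diffusion form, and I would bound its four contributions separately. The drift piece is $\le C\V(x)$ by $\textbf{(J2)}$, since $\nabla\V(x)$ is proportional to $x$ with scalar factor $\le C\V(x)/(1+|x|^2)$ and $|\langle x,\beta\rangle|\le|\beta|(1+|x|^2)$. The Gaussian piece is $\le C\V(x)$ from the explicit formulae for $\nabla^2\V$ together with the finiteness of $\operatorname{Tr}\gamma$.

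The jump integral I would split at $|y|=1$. For $|y|\le 1$, a second-order Taylor expansion reduces the integrand to something of order $|y|^2\sup_{\xi\in[x,x+y]}\|\nabla^2\V(\xi)\|$; since $\xi\in B_{|x|+1}(0)$ one checks, distinguishing $p\le 2$ from $p>2$, that the supremum is controlled by a constant multiple of $\V(x)$, and integrating against the finite measure $\int_{B_1(0)}|y|^2\nu(\D y)$ produces a bound $\le C\V(x)$. For $|y|>1$ I would use the elementary comparison $\V(x+y)\le C_p(\V(x)+|y|^p)$, which follows either from $(a+b)^{p/2}\le a^{p/2}+b^{p/2}$ (for $p\le 2$) or from the convexity estimate $(a+b)^{p/2}\le 2^{p/2-1}(a^{p/2}+b^{p/2})$ (for $p>2$); the large-jump integral is then bounded by $C\V(x)\nu(B_1(0)^c)+C\int_{B_1(0)^c}|y|^p\nu(\D y)$, which is finite precisely because finiteness of $\mathbb{E}|Y_1|^p$ is equivalent to $\int_{B_1(0)^c}|y|^p\nu(\D y)<\infty$ for L\'evy processes. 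Summing the four pieces and absorbing residual additive constants into the main term via $\V\ge 1$ yields $\mathcal{L}\V(x)\le\Delta\V(x)$.

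To finish, introducing the stopping times $\tau_n:=\inf\{t\ge 0:|X_t|\ge n\}$, which tend to $\infty$ almost surely by non-explosiveness of $\process{X}$, Dynkin's formula applied to the bounded quantity $\V(X_{t\wedge\tau_n})$ produces
\begin{equation*}
\mathbb{E}^x\bigl[\V(X_{t\wedge\tau_n})\bigr]\,\le\,\V(x)+\Delta\int_0^t\mathbb{E}^x\bigl[\V(X_{s\wedge\tau_n})\bigr]\D s\,,
\end{equation*}
so Gronwall gives $\mathbb{E}^x[\V(X_{t\wedge\tau_n})]\le\V(x)\E^{\Delta t}$, and Fatou's lemma as $n\to\infty$ yields $\mathbb{E}^x[\V(X_t)]\le\V(x)\E^{\Delta t}\le(1+|x|^p)\E^{\Delta t}$, which is the claim. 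The main obstacle I anticipate is the jump integral: the regimes $p\le 2$ and $p>2$ must be treated separately, because the sign of $p/2-1$ alters both the size of $\|\nabla^2\V\|$ on small jumps and the correct form of the subadditivity inequality on large jumps; once these elementary but slightly fiddly estimates are in place, the remainder is a routine Foster-Lyapunov argument.
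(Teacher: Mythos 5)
Your proof follows essentially the same Foster--Lyapunov/Gronwall strategy as the paper, and the four generator estimates you sketch are all correct; the paper's version works with an abstractly chosen $\chi\in C^2(\R^d)$ equal to $|x|^p$ on $B_1^c(0)$ rather than your explicit formulas for $\V$, but that is a cosmetic difference.

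The one step that needs repair is the justification of Dynkin's formula: $\V(X_{t\wedge\tau_n})$ is \emph{not} a bounded random variable. Unlike in the diffusion case, $\process{X}$ can exit $B_n(0)$ by a jump at time $\tau_n$, so $|X_{\tau_n}|$ may be arbitrarily large; your phrase ``the bounded quantity'' is therefore wrong, and the stopped local martingale is not automatically a true martingale. The paper avoids this by applying It\^o's formula not to $\V$ itself but to a sequence $\chi_n\in C_b^2(\R^d)$ agreeing with $\chi$ on $B_{n+1}(0)$, for which the stopped process is a genuine martingale, and then lets $n\to\infty$ via monotone convergence. Alternatively you may keep your $\V$ and argue directly that the compensated jump integral is a true martingale on $[0,t\wedge\tau_n]$: before $\tau_n$ the process lives in $B_n(0)$, and one has $\sup_{|x|\le n}\int_{\R^d}\bigl|\V(x+y)-\V(x)-\langle y,\nabla\V(x)\rangle\Ind_{B_1(0)}(y)\bigr|\,\nu(\D y)<\infty$, by the Taylor estimate on $B_1(0)$ and your comparison $\V(x+y)\le C_p(\V(x)+|y|^p)$ on $B_1^c(0)$ together with $\int_{B_1^c(0)}|y|^p\nu(\D y)<\infty$. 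Either patch is routine; once it is in place the remainder of your argument is sound.
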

\begin{proof} Let $\chi\in C^2(\R^d)$ be such that $\chi(x)\ge0$, $\chi(x)\le|x|^p$ and $\chi(x)=|x|^p$ for $x\in B_1^c(0)$. Further, for $n\in\N$,  let $\chi_n\in C^2_b(\R^d)$ be such that $\chi_n(x)\ge0$, $\chi_n(x)=\chi|_{B_{n+1}(0)}(x)$ and $\chi_n(x)\to \chi(x)$ as $n\to\infty$, and $\tau_n:=\inf\{t\ge0:X_t\in B^c_n(0)\}$. Then, according to It\^o's formula (see \cite[Remark 2.2]{Albeverio-Brzezniak-Wu-2010}), we have that \begin{align*} \mathbb{E}^x[\chi_n(X_{t\wedge\tau_n})]&\,\le\,\chi_n(x)+\Delta_n(t\wedge\tau_n)+\Delta_n\mathbb{E}^x\left[\int_0^{t\wedge\tau_n}\chi_n(X_{s})\D s\right]\\
	&\;\le\; \chi_n(x)+\Delta_nt+\Delta_n\int_0^{t}\mathbb{E}^x\left[\chi_n(X_{s\wedge\tau_n})\right]\D s\,,\qquad n\in\N\,,\ t\ge0\,,\ x\in\R^d\,,
	\end{align*}
	where the constants $\Delta_n>0$ depend on $p$,  $\beta$, $\gamma$, $b(x)$ and constants $\int_{B_1(0)}|y|^2\nu(\D y)$, $\nu(B_1^c(0))$, $\sup_{x\in B_R(0)}|\nabla\chi_n(x)|$ and $\sup_{x\in B_R(0)}|\nabla^2\chi_n(x)|$, for $R>0$ large enough. Clearly, the functions $\chi_n(x)$ can be chosen such that $\Delta:=\sup_{n\in\N}\Delta_n<\infty.$ Now, since the function $t\mapsto\mathbb{E}^x[\chi_n(X_{t\wedge\tau_n})]$ is bounded and  c\`adl\`ag, Gronwall's lemma implies that \begin{equation*}\mathbb{E}^x[\chi_n(X_{t\wedge\tau_n})]\,\le\, (\chi_n(x)+1) e^{\Delta t} -1\,,\qquad n\in\N\,,\ t\ge0\,,\ x\in\R^d\,.\end{equation*} By letting  $n\to \infty$ monotone convergence theorem and non-explosivity of $\process{X}$  imply that \begin{equation*}\mathbb{E}^x[\chi(X_{t})]\,\le\,(\chi(x)+1) e^{\Delta t} -1\,,\qquad  t\ge0\,,\ x\in\R^d\,.\end{equation*} Finally, we have that \begin{equation*}\mathbb{E}^x\bigl[|X_{t}|^p\bigr]\,\le\,\mathbb{E}^x[\chi(X_{t})]+1\,\le\,(\chi(x)+1)\E^{\Delta t}\,\le\,(|x|^p+1)\E^{\Delta t}\,,\qquad  t\ge0\,,\ x\in\R^d\,.\end{equation*}
\end{proof}
\begin{lemma}\label{lm:jump2} Assume that  $\nu(\R^d)<\infty$.
	Then, the sample paths of $\process{X}$ are piecewise continuous $\Prob^x$-a.s. 
\end{lemma}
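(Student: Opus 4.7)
\medskip

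\noindent\textbf{Proof sketch.} The plan is to exploit the Lévy-Itô decomposition of $\process{Y}$ in the presence of a finite Lévy measure and reduce the problem to behavior on random intervals between jump times.

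First, since $\nu(\R^d)<\infty$ we have $\int_{B_1(0)}|y|\,\nu(\D y)\le\nu(\R^d)<\infty$, so small jumps need not be compensated, and the Lévy-Itô decomposition gives
\begin{equation*}
Y_t\,=\,\tilde\beta\, t+\sigma B_t+J_t\,,\qquad t\ge0\,,
\end{equation*}
where $\tilde\beta:=\beta-\int_{B_1(0)}y\,\nu(\D y)\in\R^d$, $\sigma$ is any square root of $\gamma$, $\process{B}$ is a standard $d$-dimensional Brownian motion, and $\process{J}$ is a compound Poisson process with intensity $\lambda:=\nu(\R^d)$ and jump-size law $\lambda^{-1}\nu$, independent of $\process{B}$.

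Next, I denote by $0=:T_0<T_1<T_2<\cdots$ the jump times of $\process{J}$, which form a Poisson point process of rate $\lambda$, so  $T_n\uparrow\infty$ $\Prob$-a.s. On each random interval $[T_{n-1},T_n)$, the driving process $\process{Y}$ is continuous (it equals $\tilde\beta t+\sigma B_t$ up to the additive constant $J_{T_{n-1}}$), so from \eqref{eq:SDEJ} the process $\process{X}$ solves the diffusion equation
\begin{equation*}
\D X_t\,=\,\bigl(b(X_t)+\tilde\beta\bigr)\,\D t+\sigma\,\D B_t\,,\qquad t\in[T_{n-1},T_n)\,,
\end{equation*}
with (random) initial data $X_{T_{n-1}}$. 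Under $\textbf{(J1)}$-$\textbf{(J2)}$ (which imply, in particular, local boundedness and a one-sided Lipschitz bound in the sense of $\textbf{(C1)}$-$\textbf{(C3)}$), this SDE admits a unique strong continuous solution on $[T_{n-1},T_n)$; by uniqueness of the strong solution of \eqref{eq:SDEJ} on this interval (which follows from \cite[Theorem 1.1]{Majka-2016}), this continuous process must coincide with the restriction of $\process{X}$. At the jump time $T_n$ the process has the single jump $\Delta X_{T_n}=\Delta Y_{T_n}$.

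Combining, on each compact time interval $[0,T]$ only finitely many $T_n$ lie in $[0,T]$, and $\process{X}$ is continuous on each $[T_{n-1},T_n\wedge T)$; hence its sample paths are piecewise continuous $\Prob^x$-a.s. for every $x\in\R^d$. The only non-routine point is the identification, on each inter-jump interval, of the process $\process{X}$ with the unique continuous strong solution of the purely diffusive SDE above; this is immediate from strong uniqueness for \eqref{eq:SDEJ}, so the argument is essentially a bookkeeping exercise once the Lévy-Itô decomposition is in place.
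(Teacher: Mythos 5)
Your argument is correct and rests on the same core observation as the paper's: since $\nu(\R^d)<\infty$, the jump times of $Y$ (and hence of $X$, because the drift integral is absolutely continuous) form a Poisson process of finite rate $\nu(\R^d)$, so they are a.s.\ finite in any compact interval and $X$ is continuous between them. The paper phrases this more compactly by directly defining $\tau_n$ as the successive jump times of $X$, noting $\tau_n=\inf\{t\ge\tau_{n-1}:|Y_t-Y_{t-}|>0\}$ with exponentially distributed gaps, whereas you pass through the Lévy--It\^o decomposition and strong uniqueness of the inter-jump diffusion SDE; both routes are fine, and yours is just a slightly more verbose version of the same idea.
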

\begin{proof}
	Define $\tau_0:=0$ and
	\begin{equation*}
	\tau_n\,:=\,\inf\bigl\{t\ge\tau_{n-1}: |X_t-X_{t-}|>0\bigr\}
	\,=\,\inf\bigl\{t\ge\tau_{n-1}: | Y_t- Y_{t-}|>0\bigr\}\,,\qquad n\ge1\,.
	\end{equation*} Clearly, $\{\tau_n\}_{n\in\N}$ are i.i.d. and
	$\Prob^x(\tau_1>t)=\E^{-\nu(\R^d)t}$ (that is, $\tau_1$ is exponentially distributed with parameter $\nu(\R^d)$) for any $x\in\R^d$. Hence,
	$\process{X}$ is  continuous on $[\tau_n,\tau_{n+1})$, $n\ge0,$ $\Prob^x$-a.s. for all $x\in\R^d$.
\end{proof}
Let now $\process{X}$ be a solution to \eqref{eq:SDEJ} with $b(x)$ satisfying \textbf{(J1)} and \textbf{(J2)}, and with   $\process{Y}$ having finite $p$-th moment, $p\ge1$, and finite L\'evy measure. Then, according to Lemmas \ref{lm:jump1} and \ref{lm:jump2}, if $b(x)$ satisfies  
\eqref{eqWASS1} we conclude that $\process{X}$ satisfies 
\eqref{eqWASS2}, \eqref{eqWASS3}, \eqref{eqWASS4} and \eqref{eqWASS5}.
Further, according to \cite{Albeverio-Brzezniak-Wu-2010} and \cite{Masuda-2007}, for any $f\in C^2(\R^d)$ such that $x\mapsto \int_{B_1^c(0)}f(x+y)\nu(\D y)$ is locally bounded,
$$ f(X_t)-f(X_0)-\int_0^t\mathcal{L}f(X_s)\D s,\qquad t\ge0,$$ is a local $\Prob^x$-martingale, $x\in\R^d$, where \begin{align*}\mathcal{L}f(x)\,=\,&\langle b(x),\nabla f(x)\rangle+ \langle \beta,\nabla f(x)\rangle+\frac{1}{2}{\rm Tr}\,\gamma\,\nabla^2f(x)\\&+\int_{\R^d}\left(f(y+y)-f(x)-\langle y,\nabla f(x)\rangle\Ind_{B_1(0)}(y)\right)\nu(\D y)\,.\end{align*}
\begin{proposition} Let $p\ge1$. Assume that $b(x)$ satisfies \textbf{(J1)}, \textbf{(J2)} and \eqref{eqWASS7}, and that   $\process{Y}$ has finite $p$-th moment and finite L\'evy measure. Then, $\process{X}$ admits a unique invariant $\pi\in\mathcal{P}_p$ such that for any $\kappa>0$, $1\le q\le p$ and $\mu\in \mathcal{P}_q$ it holds that 
	\begin{equation}\label{eq:WASS10}\W_q(\pi,\mu P_t)\,\le\,\left(\frac{\W_q(\pi,\mu)}{\kappa}+1\right)\Psi_\kappa^{-1}(\Gamma t)\,,\qquad t\ge0\,.\end{equation} 
\end{proposition}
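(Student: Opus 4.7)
The plan is to parallel the proof of Theorem~\ref{tm:WASS2}: first construct an invariant probability measure via a Foster--Lyapunov drift adapted to the jump generator, then upgrade its regularity to $\mathcal{P}_p$, and finally derive uniqueness and \eqref{eq:WASS10} from the contraction bound \eqref{eqWASS5}, which, as noted in the paragraph preceding the proposition, is already available for $\process{X}$ under $\textbf{(J1)}$, $\textbf{(J2)}$, and \eqref{eqWASS7} (since \eqref{eqWASS7} implies \eqref{eqWASS1} with $f(t)=t$).

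For existence and the $p$-th moment bound I would take the Lyapunov function $\V(x):=(1+|x|^2)^{p/2}$, which is of class $C^2(\R^d)$ with $|\nabla \V(x)|\lesssim(1+|x|)^{p-1}$ and $\|\nabla^2 \V(x)\|\lesssim(1+|x|)^{p-2}$, and for which $x\mapsto\int_{B^c_1(0)}\V(x+y)\,\nu(\D y)$ is locally bounded thanks to $\nu(\R^d)<\infty$ together with $\int_{B^c_1(0)}|y|^p\nu(\D y)<\infty$. Setting $y=0$ in \eqref{eqWASS7} gives $\langle x,b(x)\rangle\le|b(0)||x|-\Gamma|x|\psi(|x|)$; combining this with the Taylor bound on $\{|y|\le1\}$ and the direct bound $|\V(x+y)-\V(x)|\lesssim(1+|x|)^{p-1}|y|+|y|^p$ on $\{|y|>1\}$ for the jump integrand, one arrives at
\begin{equation*}
\mathcal{L}\V(x)\,\le\,C_1(1+|x|)^{p-1}-C_2(1+|x|)^{p-2}|x|\psi(|x|)\,,\qquad x\in\R^d\,,
\end{equation*}
for some finite $C_1,C_2>0$. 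Convexity of $\psi$ with $\psi(0)=0$ makes $\psi$ super-additive, hence non-decreasing and unbounded, so the negative term dominates outside some $\bar B_{r_0}(0)$, yielding a Foster--Lyapunov inequality
\begin{equation*}
\mathcal{L}\V(x)\,\le\,C\,\Ind_{\bar B_{r_0}(0)}(x)-C_3|x|^{p-1}\psi(|x|)\,,\qquad x\in\R^d\,.
\end{equation*}
Via \cite[Theorem 1.1]{Meyn-Tweedie-AdvAP-III-1993} the liminf criterion of \cite[Theorem 3.1]{Meyn-Tweedie-AdvAP-II-1993} holds, giving an invariant $\pi\in\mathcal{P}$, and \cite[Theorem 4.3]{Meyn-Tweedie-AdvAP-III-1993} then delivers $\int|x|^{p-1}\psi(|x|)\pi(\D x)<\infty$; since $\psi(t)/t\ge\psi(1)>0$ for $t\ge1$, this forces $\pi\in\mathcal{P}_p$.

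Uniqueness and \eqref{eq:WASS10} then follow word-for-word as in Theorem~\ref{tm:WASS2}: the triangle-inequality argument of Remark~\ref{rm3}(iii), applied to the jump version of \eqref{eqWASS5} with $f(t)=t$ and $\delta=\kappa^{-1}$, gives for any $\mu,\nu\in\mathcal{P}_q$ and any $\kappa>0$ that
\begin{equation*}
\W_q(\mu P_t,\nu P_t)\,\le\,\left(\frac{\W_q(\mu,\nu)}{\kappa}+1\right)\Psi_\kappa^{-1}(\Gamma t)\,,\qquad t\ge0\,.
\end{equation*}
Applied to two invariant measures $\pi,\bar\pi\in\mathcal{P}_p\subseteq\mathcal{P}_q$ (we have $q\le p$), invariance and $t\to\infty$ force $\W_q(\pi,\bar\pi)=0$; applied to $\pi$ and $\mu\in\mathcal{P}_q$, it gives exactly \eqref{eq:WASS10}.

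The principal obstacle will be the Foster--Lyapunov step: verifying that the identity $\V(X_t)-\V(X_0)-\int_0^t\mathcal{L}\V(X_s)\D s$ is a local $\Prob^x$-martingale for the unbounded $\V$, and controlling the jump part of $\mathcal{L}\V$ uniformly in $x$. The first issue is handled by the stopping-time truncation used in Lemma~\ref{lm:jump1}: apply It\^o's formula as in \cite[Remark 2.2]{Albeverio-Brzezniak-Wu-2010} to smooth truncations $\V_n$ of $\V$ up to the exit time $\tau_n$, then pass to the limit by non-explosivity and monotone convergence. The second relies crucially on both hypotheses on $\nu$: finiteness of $\nu$ and $\int_{B^c_1(0)}|y|^p\nu(\D y)<\infty$ together keep the jump contribution at order $(1+|x|)^{p-1}$, which is strictly outpaced by $(1+|x|)^{p-2}|x|\psi(|x|)$ precisely because $\psi$ is unbounded.
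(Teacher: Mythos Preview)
Your proposal is correct and follows essentially the same route as the paper: a Foster--Lyapunov argument with a $|x|^p$-type function to obtain existence of an invariant measure and the moment bound $\int|x|^{p-1}\psi(|x|)\pi(\D x)<\infty$ via \cite[Theorems~1.1 and~4.3]{Meyn-Tweedie-AdvAP-III-1993}, followed by the contraction bound from Remark~\ref{rm3}(iii) for uniqueness and \eqref{eq:WASS10}. The only cosmetic difference is that the paper rewrites the generator with full compensation $\langle y,\nabla\V_p(x)\rangle$ in the jump integrand and invokes \cite[Lemma~5.1]{Arapostathis-Pang-Sandric-2019} to bound that term, whereas you keep the original compensator on $B_1(0)$ and derive the $(1+|x|)^{p-1}$ growth by hand---both lead to the same drift inequality.
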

\begin{proof} First, observe that 
	\begin{align*}\mathcal{L}f(x)\,=\,&\langle b(x),\nabla f(x)\rangle+ \langle \beta+\int_{B^c_1(0)}y\,\nu(\D y),\nabla f(x)\rangle+\frac{1}{2}{\rm Tr}\,\gamma\,\nabla^2f(x)\\&+\int_{\R^d}\left(f(y+y)-f(x)-\langle y,\nabla f(x)\rangle\right)\nu(\D y)\,.\end{align*} By taking a non-negative $\V_p\in C^2(\R^d)$ such that $\V_p(x)=|x|^p$ on $B_1^c(0)$ from \cite[Lemma 5.1]{Arapostathis-Pang-Sandric-2019} we have that $$\sup_{x\in\R^d}\left|\int_{\R^d}\left(\V_p(y+y)-\V_p(x)-\langle y,\nabla \V_p(x)\rangle\right)\nu(\D y)\right|\,<\,\infty.$$ Now, by  completely the same approach as in the proof of Theorem \ref{tm:WASS2} we conclude that $\process{X}$ admits a unique invariant $\pi\in\mathcal{P}$ such that $\int_{\R^d}|x|^{p-1}\psi(|x|)\pi(\D x)<\infty.$ Thus, $\pi\in\mathcal{P}_p$, and the relation in \eqref{eq:WASS10} follows by the same reasoning as in the proof of Theorem \ref{tm:WASS2}.
\end{proof}

Analogously as in Subsection \ref{du}, in the following proposition  we discuss ergodicity of a class of Markov processes with jumps, obtained through Bochner's subordination method, with respect to Wasserstein distances.

\begin{proposition}\label{p3} 
	Let  $\process{M}$ be a  Markov process with state space $(\R^d,\mathcal{B}(\R^d))$ and semigroup $\process{P}$.  Let $\process{S}$ be a subordinator with characteristic exponent $\phi(u)$, independent of   $\process{M}$. 
	Further, let $\rho$ be a metric on $\R^d$ such that $(\R^d,\rho)$ is a Polish space and  $\mathcal{B}(\R^d_\rho)\subseteq\mathcal{B}(\R^d)$, that is, $\rho$ induces a coarser topology than the standard $d$-dimensional Euclidean metric on $\R^d$. Assume,
	that $\process{M}$ admits an invariant $\pi\in\mathcal{P}$ such that  $\W_{\rho,p}(\delta_x P_t,\pi)\le \Gamma(x)r(t),$ $t\ge0$, $x\in\R^d$, where $r:[0,\infty)\to[1,\infty)$ is Borel measurable and $\Gamma(x)\ge0$. 
	Then,  $\W_{\rho,p}(\delta_x P^\phi_t,\pi)\le \Gamma(x)r_\phi(t),$ $t\ge0$, $x\in\R^d,$ where $r_\phi(t)=\left(\mathbb{E}[r^p(S_t)]\right)^{1/p}.$ 
\end{proposition}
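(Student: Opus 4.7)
The plan is to bound $\W_{\rho,p}(\delta_x P^\phi_t,\pi)$ by averaging, against the subordinator's marginal $\mu_t(\D s)=\Prob(S_t\in\D s)$, a family of optimal couplings of the measures $\delta_x P_s$ with $\pi$. First, I would observe that $\pi$ is invariant for $\process{M^\phi}$ as well: by Fubini and the invariance of $\pi$ under $\process{P}$,
\[
\pi P^\phi_t\,=\,\int_{[0,\infty)}\pi P_s\,\mu_t(\D s)\,=\,\pi\,,
\]
and by the very definition of the subordinate transition kernel one has the mixture representation $\delta_x P^\phi_t=\int_{[0,\infty)}\delta_x P_s\,\mu_t(\D s)$.

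The key step is to build an explicit coupling between $\delta_x P^\phi_t$ and $\pi$. For each $s\ge 0$, since $(\R^d,\rho)$ is Polish, there exists an optimal $\W_{\rho,p}$-coupling $\Pi_s\in\mathcal{C}(\delta_x P_s,\pi)$ attaining the infimum in the definition of the Wasserstein distance. By a standard measurable selection theorem for optimal transport plans on Polish spaces (e.g.\ \cite[Corollary 5.22]{Villani-Book-2009}), one can arrange the map $s\mapsto \Pi_s$ to be Borel measurable; here the hypotheses $(\R^d,\rho)$ Polish and $\mathcal{B}(\R^d_\rho)\subseteq\mathcal{B}(\R^d)$ are used precisely to guarantee that $s\mapsto\delta_x P_s$ is measurable into $\mathcal{P}_\rho$. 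Setting
\[
\Pi^\phi_t(\D u,\D v)\,:=\,\int_{[0,\infty)}\Pi_s(\D u,\D v)\,\mu_t(\D s)\,,
\]
one checks immediately that the first marginal of $\Pi^\phi_t$ equals $\delta_x P^\phi_t$ and the second marginal equals $\pi$, so $\Pi^\phi_t\in\mathcal{C}(\delta_x P^\phi_t,\pi)$.

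The announced bound then follows by Fubini and the standing hypothesis on $\process{M}$: using $\Pi^\phi_t$ as a (possibly non-optimal) coupling,
\[
\W_{\rho,p}(\delta_x P^\phi_t,\pi)^p\,\le\,\int_{[0,\infty)}\W_{\rho,p}(\delta_x P_s,\pi)^p\,\mu_t(\D s)\,\le\,\Gamma(x)^p\int_{[0,\infty)}r(s)^p\,\mu_t(\D s)\,=\,\Gamma(x)^p\,\mathbb{E}[r(S_t)^p]\,,
\]
and taking $p$-th roots yields $\W_{\rho,p}(\delta_x P^\phi_t,\pi)\le \Gamma(x) r_\phi(t)$.

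The only nontrivial point is the measurable selection of $s\mapsto\Pi_s$; once this is in place the computation is routine Fubini. An alternative route that avoids measurable selection is to invoke the convexity of $\mu\mapsto\W_{\rho,p}(\mu,\pi)^p$ on $\mathcal{P}_\rho$ and apply Jensen's inequality directly to the mixture representation of $\delta_x P^\phi_t$, obtaining the same chain of inequalities.
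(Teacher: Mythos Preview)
Your proposal is correct and follows essentially the same route as the paper: both construct the averaged coupling $\Pi^\phi_t=\int_{[0,\infty)}\Pi_s\,\mu_t(\D s)$ from optimal couplings $\Pi_s\in\mathcal{C}(\delta_xP_s,\pi)$ and then apply Fubini together with the hypothesis $\W_{\rho,p}(\delta_xP_s,\pi)\le\Gamma(x)r(s)$. The paper simply invokes existence of optimal couplings (via \cite[Theorem 4.1]{Villani-Book-2009}) and proceeds; you are more careful in singling out the measurable selection of $s\mapsto\Pi_s$ (and in offering the convexity/Jensen alternative), which the paper does not explicitly address.
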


\begin{proof} First, recall that if $\pi$ is an invariant measure for $\process{M}$, then it is also invariant for $\process{M^\phi}$. Next,  \cite[Theorem 4.1]{Villani-Book-2009} implies that for each $s\in[0,\infty)$ there is $\Pi_s\in\mathcal{C}(\delta_x P_s,\pi)$ such that $\W_{\rho,p}(\delta_x P^\phi_s,\pi)=\int_{\R^d\times \R^d}\rho(y,z)\Pi_s(\D y,\D z)$. Now, we have that
	\begin{align*}
	\W^p_{\rho,p}(\delta_xP_t^\phi,\pi)&\,=\,\inf_{\Pi\in\mathcal{C}(\delta_xP_t^\phi,\pi)}\int_{\R^d\times\R^d}\rho^p(y,z)\Pi(\D y,\D z)\\&\,\le\,
	\int_{\R^d\times\R^d}\rho^p(y,z)\int_{[0,\infty)}\Pi_s(\D y,\D z)\mu_t(\D s)\\&\,\le
	\,	\int_{[0,\infty)} \W^p_{\rho,p}(\delta_xP_s,\pi)\mu_t(\D s)\\&\,\le\, \Gamma^p(x)\int_{[0,\infty)} r^p(s)\mu_t(ds)\\
	&\,=\,\Gamma^p(x)\mathbb{E}[r^p(S_t)]\,,\end{align*}  which completes the proof.
\end{proof}

\section*{Acknowledgements}
This research was supported by the Croatian Science Foundation (under Project 8958).
We also thank two anonymous referees for the helpful comments that have led to significant improvements of the results in the article.

\bibliographystyle{alpha}
\bibliography{References}

\end{document}